\documentclass[a4paper,12pt]{article}
\usepackage{amsmath}
\usepackage{amssymb}
\usepackage{amsthm}
\usepackage[english]{babel}
\usepackage{hyperref}
\usepackage[all]{xy}
\usepackage{tikz}
\usetikzlibrary{decorations.pathreplacing}
\usepackage{mathdots}

\newtheorem{theor}{Theorem}[section]
\newtheorem{lemma}[theor]{Lemma}

\newcommand{\soc}{\mathrm{soc}}

\newcommand{\Hom}{\mathrm{Hom}}
\newcommand{\End}{\mathrm{End}}

\newcommand{\id}{\mathrm{id}}

\newcommand{\s}{\Sigma}

\newcommand{\md}{\mbox{-}\mathrm{mod}}
\newcommand{\Md}{\!\mod}

\newcommand{\Z}{\mathbb{Z}}

\renewcommand{\1}{\mathbf{1}}
\newcommand{\sgn}{\mathbf{\mathrm{sgn}}}

\newcommand{\Mull}{{\tt M}}

\newcommand{\la}{\lambda}

\newcommand{\da}{{\downarrow}}
\newcommand{\ua}{{\uparrow}}

\renewcommand{\Im}{\mathrm{Im}}
\renewcommand{\epsilon}{\varepsilon}
\renewcommand{\phi}{\varphi}
\newcommand{\xymat}{\xymatrix@R=6pt@C=10pt}

\begin{document}

\begin{center}
{\Large 
Irreducible tensor products for alternating groups in characteristic 5}

\vspace{12pt}

Lucia Morotti






\end{center}

\begin{abstract}
In this paper we study irreducible tensor products of representations of alternating groups and classify such products in characteristic 5.
\end{abstract}



\section{Introduction}

Let $D_1$ and $D_2$ be irreducible representations of a group $G$. In general the tensor product $D_1\!\otimes\! D_2$ is not irreducible. We say that $D_1\!\otimes\! D_2$ is a non-trivial irreducible tensor product if $D_1\!\otimes\! D_2$ is irreducible and neither $D_1$ nor $D_2$ has dimension 1. The classification of non-trivial irreducible tensor products is relevant to the description of maximal subgroups in finite groups of Lie type, see \cite{a} and \cite{as}.

Non-trivial irreducible tensor product of representations of symmetric groups have been fully classified (see \cite{bk}, \cite{gk}, \cite{gj}, \cite{m1} and \cite{z1}). In particular non-trivial irreducible tensor products for $\s_n$ only exist if $p=2$ and $n\equiv 2\Md 4$. For alternating groups, non-trivial irreducible tensor products have been classified in characteristic 0 in \cite{bk3} and in characteristic $p\geq 7$ in \cite{bk2}. For covering groups of symmetric and alternating groups a partial classification of non-trivial irreducible tensor products can be found in \cite{b2}, \cite{bk4} and \cite{kt}. When considering groups of Lie type in defining characteristic, non-trivial irreducible tensor products are not unusual, due to Steinberg tensor product theorem. In non-defining characteristic however it has been proved that in almost all cases no non-trivial irreducible tensor products exist, see \cite{kt2} and \cite{mt}.

In this paper we will consider the case where $G=A_n$ is an alternating groups. Also we will mostly consider the case $p=5$ in this paper, although some results hold in general, provided $p\not=2$. Our main result is the following:

\begin{theor}\label{t3}
Let $p=5$ and $D_1$ and $D_2$ be irreducible representations of $A_n$ of dimension greater than 1. Then $D_1\!\otimes\! D_2$ is irreducible if and only if $n\not\equiv 0\Md 5$ and, up to exchange, $D_1\cong E^\lambda_\pm$ with $\lambda=\lambda^\Mull$ a JS-partition and $D_2\cong E^{(n-1,1)}$. In this case $E^\lambda_\pm\otimes E^{(n-1,1)}\cong E^\nu$, where $\nu$ is obtained from $\lambda$ by removing the top removable node and adding the bottom addable node.
\end{theor}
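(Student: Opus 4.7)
Every irreducible $A_n$-module arises from a $5$-regular partition $\la$ via the restriction $D^\la\da_{A_n}$: this is an irreducible $E^\la$ when $\la\ne\la^\Mull$ and splits as $E^\la_+\oplus E^\la_-$ when $\la=\la^\Mull$. In particular, for any candidate pair $D_1,D_2$ associated to partitions $\la,\mu$, the product $D_1\otimes D_2$ is a direct summand of $(D^\la\otimes D^\mu)\da_{A_n}$. If neither $\la$ nor $\mu$ is Mullineux-fixed then $D_1\otimes D_2$ is the whole restriction, so its irreducibility would force $D^\la\otimes D^\mu$ to be irreducible as a $\s_n$-module---but in odd characteristic no such non-trivial product exists, by the $\s_n$-classification cited in the introduction. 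Hence, after relabelling, one may assume $\la=\la^\Mull$ and $D_1=E^\la_\pm$.

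\textbf{Counting constituents.} I would then split into Case A (only $D_1$ is split) and Case B (both $D_i$ are split). In Case A we have $(D^\la\otimes D^\mu)\da_{A_n}\cong (E^\la_+\otimes D_2)\oplus (E^\la_-\otimes D_2)$, so the irreducibility of both summands---interchanged by the sign involution---forces $D^\la\otimes D^\mu$ to have exactly two composition factors as a $\s_n$-module, which restrict irreducibly and equidimensionally to $A_n$. In Case B, one analogously requires four equidimensional constituents. A lower bound on the number of composition factors of $D^\la\otimes D^\mu$ can be obtained by restricting to $\s_{n-1}$, where $\hd(D^\la\da_{\s_{n-1}})$ and $\soc(D^\la\da_{\s_{n-1}})$ are controlled by the normal and conormal nodes of $\la$, and then comparing with the analogous restriction of $D_2$. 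This rigidity drives $\mu$ to be the minimal non-trivial candidate $(n-1,1)$, so $D_2=E^{(n-1,1)}$; it also rules out Case B and forces $\la$ to have a unique normal node, i.e.\ to be a JS-partition.

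\textbf{Explicit isomorphism.} To identify $\nu$, I would lift the tensor product to $\s_n$ using the short exact sequence $0\to D^{(n)}\to M^{(n-1,1)}\to D^{(n-1,1)}\to 0$, so that $D^\la\otimes M^{(n-1,1)}\cong D^\la\ua^{\s_{n+1}}\!\da_{\s_n}$. For a JS-partition $\la$ the head and socle of this functor each consist of a single irreducible, and the unique composition factor of $D^\la\otimes D^{(n-1,1)}$ distinct from $D^\la$ is $D^\nu$ with $\nu$ obtained from $\la$ by removing the top removable node and adding the bottom addable node. Restricting back to $A_n$ and checking that $\nu\ne\nu^\Mull$ (so no further splitting intervenes) yields the stated isomorphism.

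\textbf{Main obstacle.} The delicate step is the lower-bound composition-factor count in Case A for $\mu\ne(n-1,1)$, which must exclude any exotic matching of dimensions that could mimic the two-factor pattern. This is precisely where the hypothesis $n\not\equiv 0\Md 5$ enters, because when $5\mid n$ the natural module $E^{(n-1,1)}$ has the smaller dimension $n-2$ and the branching bounds degenerate, leaving no $\mu$ satisfying all the required constraints; small $n$ will also need direct verification.
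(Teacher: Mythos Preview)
Your case division and the treatment of the non-split/non-split case via the $\s_n$ classification match the paper. The gap is in how you propose to handle Cases~A and~B. You suggest bounding the number of composition factors of $D^\la\otimes D^\mu$ from below by restricting to $\s_{n-1}$ and invoking the head/socle description via normal nodes. But a straight composition-factor count after restriction cannot separate the two sides: if $D^\la\otimes D^\mu\cong D^{\nu_1}\oplus D^{\nu_2}$, then $(D^{\nu_1}\oplus D^{\nu_2})\da_{\s_{n-1}}$ may already have arbitrarily many composition factors, so comparing with $D^\la\da_{\s_{n-1}}\otimes D^\mu\da_{\s_{n-1}}$ yields no contradiction. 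The paper does not count composition factors at all; it bounds $\dim\Hom_{\s_n}(\End_F(D^\la),\End_F(D^\mu))$ from below, which by Lemma~\ref{l14} must equal~$2$ in Case~A. The lower bound $\geq 3$ is produced via Lemma~\ref{l15} by exhibiting, for several $\alpha\vdash n$, homomorphisms $M^\alpha\to\End_F(D^\la)$ and $M^\alpha\to\End_F(D^\mu)$ that do not vanish on $S^\alpha$. Constructing these uses the identity $\dim\Hom_{\s_n}(M^\alpha,\End_F(D^\la))=\dim\End_{\s_\alpha}(D^\la\da_{\s_\alpha})$ together with the explicit Young-module decompositions of $M^\alpha$ in characteristic~$5$ (Lemmas~\ref{l25}--\ref{l24}); the reduction of $\la$ to a JS-partition already needs the refined inequalities of Lemmas~\ref{l20}--\ref{l22}, not a single restriction step.

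You also underestimate the exceptional cases. When $\mu$ or $\mu^\Mull$ is $(a+b,a)$ with $0\le b\le 3$, the generic estimate (Corollary~4.12 of \cite{bk5}) breaks down, and the paper must compute $D^{(a+b,a)}\da_{\s_{n-k,k}}$ explicitly (Section~\ref{s3}) and switch to $\alpha=(n-3,1^3)$ or $(n-4,2^2)$; producing a suitable morphism for $\la$ in these degrees requires the separate analysis of Mullineux-fixed JS-partitions in Lemma~\ref{l29} and Lemmas~\ref{l50}--\ref{l51}. In Case~B the same Hom-space machinery is combined with the structure of $D^\la\da_{\s_{n-3,3}}$ from Lemma~\ref{l29}. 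Finally, your explanation of where $n\not\equiv 0\Md 5$ enters is not quite right: the condition does not arise from degenerate branching bounds but from Theorem~\ref{tsnat} (i.e.\ Theorem~3.3 of \cite{bk2}), which decides precisely when $E^\la_\pm\otimes E^{(n-1,1)}$ is irreducible for a Mullineux-fixed JS-partition $\la$; Lemma~\ref{l23} shows independently that such $\la$ can only exist when $n\equiv 0,\pm 1\Md 5$.
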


In view of Lemma \ref{l23}, this theorem extends the main result of \cite{bk2} to characteristic 5. In smaller characteristic irreducible tensor products of the form $E^\lambda_\pm\otimes E^\mu_\pm$ also exists. For example $E^{(3,2)}_+\otimes E^{(3,2)}_-\cong E^{(4,1)}$ if $p=2$ and $E^{(4,1^2)}_+\otimes E^{(4,1^2)}_-\cong E^{(4,2)}$ if $p=3$ (see \cite{bk2}). A classifications of irreducible tensor products in characteristic 3 and a partial classification in characteristic 2 can be found in \cite{m2}.

To prove the theorem we need to consider three cases:
\begin{enumerate}
\item
$D_1=E^\lambda$ and $D_2=E^\mu$: in this case $D_1\otimes D_2$ is not irreducible by  \cite{bk}.

\item
$D_1=E^\lambda_\pm$ and $D_2=E^\mu$: the proof of this case is covered by Theorems \ref{tsns} and \ref{tsnat}.

\item
$D_1=E^\lambda_\pm$ and $D_2=E^\mu_\pm$: in this case $D_1\otimes D_2$ is not irreducible by Theorem \ref{tds}.
\end{enumerate}

\section{Notations and basic results}

Let $F$ be an algebraically closed field  of characteristic $p$.

For a partition $\lambda\vdash n$ let $S^\lambda$ be the corresponding Specht module, $M^\lambda:=\1\ua_{\s_\alpha}^{\s_n}$ to be the permutation module induced from the Young subgroup $\s_\alpha=\s_{\alpha_1}\times \s_{\alpha_2}\times \ldots\subseteq \s_n$ and let $Y^\lambda$ to be the corresponding Young module. (Notice that $M^\lambda$ can be defined also for unordered partitions). If $\lambda$ is a $p$-regular partition (that is a partition where no part is repeated $p$ or more times) we define $D^\lambda$ to be the irreducible $F\s_n$-module indexed by $\lambda$. The modules $D^\lambda$, $M^\lambda$ and $Y^\lambda$ are known to be self-dual and this fact will be used throughout the paper. Further, from their definition we have that $D^{(n)}\cong S^{(n)}\cong M^{(n)}\cong \1_{\s_n}$. For more informations on such modules see \cite{JamesBook}, \cite{JamesArcata} and Section 4.6 of \cite{Martin}.

Let $\sgn$ be the sign representation of $\s_n$. Since $\sgn$ is $1$-dimensional, if $\la$ is $p$-regular then $D^\lambda\otimes \sgn$ is irreducible. In particular there exists a $p$-regular partition $\lambda^\Mull$ (the Mullineux dual of $\lambda$) such that $D^{\lambda^\Mull}\cong D^\lambda\otimes \sgn$. 
For $p\not=2$ it is well known that if $\lambda\not=\lambda^\Mull$ then $D^\lambda\da_{A_n}=E^\lambda$ is irreducible (and in this case $E^\lambda\cong E^{\lambda^\Mull}$), while if $\lambda=\lambda^\Mull$ then $D^\lambda\da_{A_n}=E^\lambda_+\oplus E^\lambda_-$ is the direct sum of two non-isomorphic irreducible representations of $A_n$. Further all irreducible representations of $A_n$ are of one of these two forms (see for example \cite{f}).

When considering permutation and Young modules we have the following results, where for any partition $\lambda\vdash n$ we define $A_\lambda:=\s_\lambda\cap A_n$. Lemma \ref{lM} holds easily by Mackey's theorem. For a proof of Lemma \ref{LYoung} see \cite{JamesArcata} and Section 4.6 of \cite{Martin}.

\begin{lemma}\label{lM}
If $\lambda\vdash n$ with $\lambda\not=(1^n)$, then $M^\lambda\da_{A_n}\cong 1\ua_{A_\lambda}^{A_n}$.
\end{lemma}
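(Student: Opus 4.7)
The plan is to apply Mackey's decomposition theorem directly, as the hint in the paper suggests. Recall that for any subgroups $H,K \leq G$ and trivial module, Mackey's formula gives
$$(\mathbf{1}\ua_H^G)\da_K \cong \bigoplus_{g \in K\backslash G/H} \mathbf{1}\ua_{K\cap gHg^{-1}}^K.$$
I would apply this with $G=\s_n$, $H=\s_\lambda$ and $K=A_n$, which yields
$$M^\lambda\da_{A_n} \cong \bigoplus_{g \in A_n\backslash \s_n/\s_\lambda} \mathbf{1}\ua_{A_n \cap g\s_\lambda g^{-1}}^{A_n}.$$

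The whole argument then reduces to showing that the double coset space $A_n\backslash \s_n/\s_\lambda$ consists of a single element. This is the only nontrivial step, and the hypothesis $\lambda\neq (1^n)$ is exactly what makes it work: since some part of $\lambda$ is at least $2$, the Young subgroup $\s_\lambda$ contains a transposition, hence $\s_\lambda\not\subseteq A_n$. Combined with the fact that $[\s_n:A_n]=2$, this forces $A_n\s_\lambda = \s_n$, so there is only one double coset, which I can represent by $g=1$.

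With a single summand coming from $g=1$, the right-hand side of Mackey's formula collapses to $\mathbf{1}\ua_{A_n\cap \s_\lambda}^{A_n}$, and by the definition $A_\lambda := \s_\lambda\cap A_n$ this is precisely $\mathbf{1}\ua_{A_\lambda}^{A_n}$, completing the proof. I do not expect any genuine obstacle here: the only content is the observation that $\s_\lambda\not\subseteq A_n$, which is immediate from $\lambda\neq (1^n)$, and everything else is a formal application of Mackey.
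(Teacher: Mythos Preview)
Your proposal is correct and is precisely the argument the paper has in mind: the paper simply states that the lemma ``holds easily by Mackey's theorem,'' and your expansion of that remark (single double coset because $\s_\lambda\not\subseteq A_n$ when $\lambda\neq(1^n)$) is the standard and intended justification.
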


\begin{lemma} \label{LYoung}
There exist indecomposable $F \s_n$-modules $\{Y^\lambda\mid \lambda\vdash n\}$ such that $M^\lambda\cong Y^\lambda\,\oplus\, \bigoplus_{\mu\rhd\lambda}(Y^\mu)^{\oplus m_{\mu,\lambda}}$ for some $m_{\mu,\lambda}\in\Z_{\geq 0}$. Moreover, $Y^\lambda$ can be characterized as the unique direct summand of $M^\lambda$ such that  $S^\lambda\subseteq Y^\lambda$. Finally, we have $(Y^\lambda)^*\cong Y^\lambda$ for all $\lambda\vdash n$.
\end{lemma}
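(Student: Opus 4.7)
The plan is to proceed by downward induction on the dominance order on partitions of $n$, starting from the maximal partition $(n)$. For the base case $\lambda = (n)$, the module $M^{(n)} = \1_{\s_n}$ is irreducible (hence indecomposable) and self-dual, so one sets $Y^{(n)} := M^{(n)}$. For the inductive step at $\lambda \vdash n$, the key input is the classical result of James that $\dim_F \Hom_{F\s_n}(S^\lambda, M^\lambda) = 1$, spanned by the canonical embedding $S^\lambda \hookrightarrow M^\lambda$. Writing $M^\lambda = \bigoplus_i N_i$ as a direct sum of indecomposables, additivity of $\Hom$ forces exactly one summand to admit an embedding of $S^\lambda$; define $Y^\lambda$ to be this summand. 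It is by construction the unique indecomposable summand of $M^\lambda$ containing $S^\lambda$, and appears in $M^\lambda$ with multiplicity one.

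To identify the remaining summands as Young modules $Y^\mu$ for $\mu \rhd \lambda$, I would invoke two further classical inputs. First, $M^\lambda$ is a trivial-source ($p$-permutation) $F\s_n$-module, and the indecomposable trivial-source $F\s_n$-modules are precisely the Young modules $\{Y^\mu : \mu \vdash n\}$; hence every indecomposable summand of $M^\lambda$ is isomorphic to some $Y^\mu$. Second, the classical criterion that $\Hom_{F\s_n}(S^\mu, M^\lambda) \neq 0$ implies $\mu \unrhd \lambda$ forces the dominance constraint: applied to the composition $S^\mu \hookrightarrow Y^\mu \hookrightarrow M^\lambda$ it yields $\mu \unrhd \lambda$, and combined with the uniqueness from the previous paragraph, every summand other than $Y^\lambda$ itself corresponds to some $\mu \rhd \lambda$.

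For self-duality, $M^\lambda = \1\ua_{\s_\lambda}^{\s_n}$ is self-dual, being induced from a self-dual module over a field, and by the inductive hypothesis $(Y^\mu)^* \cong Y^\mu$ for all $\mu \rhd \lambda$. Dualizing the decomposition $M^\lambda \cong Y^\lambda \oplus \bigoplus_{\mu \rhd \lambda} (Y^\mu)^{\oplus m_{\mu,\lambda}}$ and invoking Krull--Schmidt then forces $(Y^\lambda)^* \cong Y^\lambda$. The main technical obstacle is the identification of the indecomposable summands of $M^\lambda$ with Young modules for partitions $\mu \unrhd \lambda$; this rests on the full trivial-source theory of modules for symmetric groups, developed in detail in \cite{JamesArcata} and Section 4.6 of \cite{Martin}, and in practice one simply cites those references.
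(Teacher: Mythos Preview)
The paper does not prove this lemma at all; it simply refers to \cite{JamesArcata} and Section~4.6 of \cite{Martin}. Your final sentence, deferring the hard step to those same references, is therefore exactly what the paper does, and the overall shape of your sketch (downward induction on dominance, using $\dim\Hom_{\s_n}(S^\lambda,M^\lambda)=1$ to isolate a distinguished summand, and Krull--Schmidt for self-duality) is the standard one.

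There is, however, one genuinely incorrect assertion in your sketch. You write that ``the indecomposable trivial-source $F\s_n$-modules are precisely the Young modules $\{Y^\mu:\mu\vdash n\}$''. This is false: Young modules are exactly the indecomposable summands of the permutation modules $M^\mu$ on cosets of \emph{Young} subgroups, whereas trivial-source modules arise from permutation modules on cosets of arbitrary subgroups, and there are in general many more of the latter. Concretely, for $\s_4$ in characteristic~$2$ the Klein four-group $\{1,(12)(34),(13)(24),(14)(23)\}$ is a $2$-subgroup which is not conjugate to the Sylow $2$-subgroup of any Young subgroup, and it is the vertex of an indecomposable trivial-source module that is not a Young module. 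So the sentence as written cannot serve as justification for the key step.

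What actually needs to be shown at that point is that any indecomposable summand $N$ of $M^\lambda$ with $\Hom_{\s_n}(S^\lambda,N)=0$ is already a summand of some $M^\mu$ with $\mu\rhd\lambda$; then by the inductive hypothesis it is a $Y^\mu$ with $\mu\rhd\lambda$. James's argument for this does go via vertex theory, but the input is more delicate than a blanket classification of trivial-source modules (one analyses the possible Young vertices and uses Green correspondence together with the structure of Young subgroups). Since you end by citing the references anyway, the fix is simply to delete or correct the false sentence and state the correct intermediate claim before deferring to \cite{JamesArcata} and \cite{Martin}.
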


We will now state some known results about branching in characteristic $p$. Let $M$ be a $F\s_n$-module lying in a unique block $B$ with content $(b_0,\ldots,b_{p-1})$ (see \cite{KBook}). For $0\leq i\leq p-1$, define $e_iM$ as the restriction of $M\da_{\s_{n-1}}$ to the block with content $(b_0,\ldots,b_{i-1},b_i-1,b_{i+1},\ldots,b_{p-1})$. Similarly, for $0\leq i\leq p-1$, define $f_iM$ as the restriction of $M\ua^{\s_{n+1}}$ to the block with content $(b_0,\ldots,b_{i-1},b_i+1,b_{i+1},\ldots,b_{p-1})$. Extend then the definition of $e_iM$ and $f_iM$ to arbitrary $F\s_n$-modules additively. The following result holds for example by Theorems 11.2.7 and 11.2.8 of \cite{KBook}.

\begin{lemma}\label{l45}
For $M$ a $F\s_n$-module we have that
\[M\da_{\s_{n-1}}\cong e_0M\oplus\ldots\oplus e_{p-1}M\hspace{24pt}\mbox{and}\hspace{24pt}M\ua^{\s_{n+1}}\cong f_0M\oplus\ldots\oplus f_{p-1}M.\]
\end{lemma}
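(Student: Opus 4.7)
The plan is to deduce both decompositions from the general block decomposition of group algebras, combined with Nakayama's conjecture, which identifies blocks of symmetric groups with their $p$-core, equivalently with their content $(c_0,\ldots,c_{p-1})$. Since both sides of each claimed isomorphism are additive in $M$, I would first reduce to the case where $M$ lies in a single block $B$ of $F\s_n$ with content $(b_0,\ldots,b_{p-1})$.

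For the restriction statement, write $M\da_{\s_{n-1}}=\bigoplus_C M_C$, the canonical decomposition obtained by applying the block central idempotents of $F\s_{n-1}$. The task reduces to showing that the only blocks $C$ of $F\s_{n-1}$ with $M_C\neq 0$ are those with content $(b_0,\ldots,b_i-1,\ldots,b_{p-1})$ for some $i\in\{0,\ldots,p-1\}$. Since restriction is exact, one can pass to composition factors, so it suffices to control $D^\lambda\da_{\s_{n-1}}$ for $\lambda$ a $p$-regular partition in $B$. Because $D^\lambda$ is a quotient of the Specht module $S^\lambda$, the composition factors of $D^\lambda\da_{\s_{n-1}}$ are among those of $S^\lambda\da_{\s_{n-1}}$. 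The classical branching rule provides a Specht filtration of the latter whose factors are the Specht modules $S^\mu$ indexed by partitions $\mu$ obtained by removing a removable node $A$ from $\lambda$; if $A$ has residue $i$, then Nakayama's conjecture places $\mu$ in a block of content $(b_0,\ldots,b_i-1,\ldots,b_{p-1})$. This identifies exactly the blocks in which composition factors can appear, yielding the restriction decomposition.

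For the induction statement the same strategy applies, using that induction is also exact and the analogous branching rule for induced Specht modules: $S^\lambda\ua^{\s_{n+1}}$ admits a filtration by Specht modules $S^\nu$ indexed by partitions $\nu$ obtained by adding an addable node $B$ to $\lambda$, with the residue of $B$ playing the role of $i$. Alternatively one may deduce the induction case from the restriction case by duality, since $F\s_n$ is a symmetric algebra and so restriction and induction between $\s_n$ and $\s_{n-1}$ are biadjoint, which also implies the biadjointness of the refined functors $e_i$ and $f_i$.

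The main obstacle is organizational rather than technical: one must carefully match the abstract decomposition coming from central idempotents with the combinatorial content-and-residue bookkeeping, which is precisely the role played by Nakayama's conjecture. This is exactly the content of Theorems 11.2.7 and 11.2.8 of \cite{KBook} that the paper cites.
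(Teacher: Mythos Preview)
Your argument is correct and is essentially the standard proof underlying Theorems 11.2.7 and 11.2.8 of \cite{KBook}. The paper itself gives no proof of this lemma beyond that citation, so there is nothing further to compare.
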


For $r\geq 1$ let $e_i^{(r)}:F\s_n\md\rightarrow F\s_{n-r}\md$ and $f_i^{(r)}:F\s_n\md\rightarrow F\s_{n+r}\md$ denote the divided power functors (see Section 11.2 of \cite{KBook} for the definitions). For $r=0$ define $e_i^{(0)}D^\lambda$ and $f_i^{(0)}D^\lambda$ to be equal to $D^\lambda$. The modules $e_i^rD^\lambda$ and $e_i^{(r)}D^\lambda$ (and similarly $f_i^rD^\lambda$ and $f_i^{(r)}D^\lambda$) are closely connected as we can be seen in the next two lemmas. For a partition $\lambda$ and $0\leq i\leq 1$ let $\epsilon_i(\lambda)$ be the number of normal nodes of $\lambda$ of residue $i$ and $\phi_i(\lambda)$ be the number of conormal nodes of $\lambda$ of residue $i$ (see Section 11.1 of \cite{KBook} or Section 2 of \cite{bk2} for two different but equivalent definitions of normal and conormal nodes). Normal and conormal nodes of partitions will play a crucial role throughout the paper. If $\epsilon_i(\lambda)\geq 1$ denote by $\tilde{e}_i(\lambda)$ the partition obtained from $\lambda$ by removing the bottom normal node of residue $i$. Similarly, if $\phi_i(\lambda)\geq 1$ denote by $\tilde{f}_i(\lambda)$ the partition obtained from $\lambda$ by adding the top conormal node of residue $i$.

\begin{lemma}\label{l39}
Let $\lambda\vdash n$ be a $p$-regular partition. Also let $0\leq i\leq p-1$ and $r\geq 0$. Then $e_i^rD^\lambda\cong(e_i^{(r)}D^\lambda)^{\oplus r!}$. Further $e_i^{(r)}D^\lambda\not=0$ if and only if $\epsilon_i(\lambda)\geq r$. In this case
\begin{enumerate}
\item\label{l39a}
$e_i^{(r)}D^\lambda$ is a self-dual indecomposable module with head and socle isomorphic to $D^{\tilde{e}_i^r(\lambda)}$,

\item\label{l39b}
$[e_i^{(r)}D^\lambda:D^{\tilde{e}_i^r(\lambda)}]=\binom{\epsilon_i(\lambda)}{r}=\dim\End_{\s_{n-1}}(e_i^{(r)}D^\lambda)$,

\item\label{l39c}
if $D^\psi$ is a composition factor of $e_i^{(r)}D^\lambda$ then $\epsilon_i(\psi)\leq \epsilon_i(\lambda)-r$, with equality holding if and only if $\psi=\tilde{e}_i^r(\lambda)$.
\end{enumerate}
\end{lemma}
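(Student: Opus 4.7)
The plan is to reduce the lemma to the standard machinery developed in Chapter~11 of \cite{KBook}, where the divided power functors $e_i^{(r)}$ are constructed with a built-in $F\s_r$-action commuting with the $F\s_{n-r}$-action on $e_i^r$. The first displayed isomorphism $e_i^r D^\lambda\cong(e_i^{(r)}D^\lambda)^{\oplus r!}$ is essentially built into the construction: one realises $e_i^r$ as a free $F\s_r$-module over $e_i^{(r)}$, so that forgetting the $\s_r$-action produces exactly $r!$ copies of $e_i^{(r)}D^\lambda$ as $F\s_{n-r}$-modules.

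Next I would establish the non-vanishing criterion together with part~(iii). By Kleshchev's modular branching rule one has $\epsilon_i(\lambda)=\max\{s:e_i^s D^\lambda\neq 0\}$, which immediately yields the equivalence $e_i^{(r)}D^\lambda\neq 0\iff\epsilon_i(\lambda)\geq r$. For (iii), if $D^\psi$ occurs as a composition factor of $e_i^{(r)}D^\lambda$ then $e_i^{\epsilon_i(\psi)}D^\psi\neq 0$ appears as a subquotient of $e_i^{\epsilon_i(\psi)+r}D^\lambda$, forcing $\epsilon_i(\psi)+r\leq\epsilon_i(\lambda)$. The equality case, together with the identification $\psi=\tilde{e}_i^r(\lambda)$, follows from the combinatorial description of normal nodes: $\tilde{e}_i$ is precisely the operation that reduces $\epsilon_i$ by one while producing the unique label that can appear in the head at each stage.

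For parts (i) and (ii) I would induct on $r$. The base case $r=1$ is the foundational modular branching theorem: $e_i D^\lambda$ is self-dual indecomposable with head and socle both equal to $D^{\tilde{e}_i(\lambda)}$ and $[e_i D^\lambda:D^{\tilde{e}_i(\lambda)}]=\epsilon_i(\lambda)$. For general $r$, self-duality of $e_i^{(r)}D^\lambda$ follows from self-duality of $e_i^r D^\lambda$ together with the canonical nature of the $\s_r$-trivial summand, while the identification of head and socle uses adjunction with $f_i^{(r)}$: $\Hom_{\s_{n-r}}(e_i^{(r)}D^\lambda,D^\mu)\cong\Hom_{\s_n}(D^\lambda,f_i^{(r)}D^\mu)$, and the dual form of the branching rule forces $\mu=\tilde{e}_i^r(\lambda)$. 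Indecomposability is then immediate from simplicity of the head. The multiplicity in (ii) is obtained by iterating the $r=1$ case: the multiplicity of $D^{\tilde{e}_i^r(\lambda)}$ in $e_i^r D^\lambda$ is $\prod_{s=0}^{r-1}\epsilon_i(\tilde{e}_i^s(\lambda))=\epsilon_i(\lambda)(\epsilon_i(\lambda)-1)\cdots(\epsilon_i(\lambda)-r+1)=r!\binom{\epsilon_i(\lambda)}{r}$; dividing by $r!$ via the first isomorphism gives the stated multiplicity in $e_i^{(r)}D^\lambda$. The endomorphism-algebra dimension agrees with this multiplicity because head and socle are both simple of type $D^{\tilde{e}_i^r(\lambda)}$, so every endomorphism factors uniquely through its image in the head.

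The main obstacle is the bookkeeping in the inductive step: one must verify that $\tilde{e}_i^s(\lambda)$ remains $p$-regular at each stage so that the labels make sense, that the multiplicity product telescopes correctly, and that part~(iii) is available inductively to exclude contributions from other composition factors. This is where the combinatorics of normal nodes, developed in Section~11.1 of \cite{KBook} or Section~2 of \cite{bk2}, does the real work.
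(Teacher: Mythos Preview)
The paper does not give a proof here; it cites Theorems 11.2.10 and 11.2.11 of \cite{KBook} and remarks that $r=0$ is trivial. Your outline is, in broad strokes, a sketch of how those theorems are established in \cite{KBook}, so you are following the same path as the paper.

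There is, however, one real gap. Your argument for $\dim\End_{\s_{n-r}}(e_i^{(r)}D^\lambda)=[e_i^{(r)}D^\lambda:D^{\tilde e_i^r(\lambda)}]$ is not valid. Knowing only that a module $M$ has simple head and simple socle, both isomorphic to $D^\mu$, yields the \emph{inequality} $\dim\End(M)\le[M:D^\mu]$ (embed $M$ in the injective envelope $I(D^\mu)$ and use $\dim\Hom(N,I(D^\mu))=[N:D^\mu]$), but not equality: for instance a uniserial module with composition factors $D^\mu,D^\mu,D^\nu,D^\mu$ from socle to head has $\dim\End=2<3=[M:D^\mu]$. The phrase ``every endomorphism factors uniquely through its image in the head'' cannot be turned into a correct proof using only head/socle information. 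In \cite{KBook} this equality is obtained by a different route, namely via the adjunction $\End(e_i^{(r)}D^\lambda)\cong\Hom(D^\lambda,f_i^{(r)}e_i^{(r)}D^\lambda)$ together with the explicit $\mathfrak{sl}_2$-type commutation relations between the divided-power functors $e_i^{(a)}$ and $f_i^{(b)}$, which allow one to compute the right-hand side directly. The remainder of your sketch (freeness of the $\s_r$-action on $e_i^r$ giving the $r!$ copies, the non-vanishing criterion, the telescoping product for the multiplicity using part~(iii) inductively, and the adjunction argument identifying head and socle) is correct and in line with \cite{KBook}.
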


\begin{lemma}\label{l40}
Let $\lambda\vdash n$ be a $p$-regular partition. Also let $0\leq i\leq p-1$ and $r\geq 0$. Then $f_i^rD^\lambda\cong(f_i^{(r)}D^\lambda)^{\oplus r!}$. Further $f_i^{(r)}D^\lambda\not=0$ if and only if $\phi_i(\lambda)\geq r$. In this case
\begin{enumerate}
\item\label{l40a}
$f_i^{(r)}D^\lambda$ is a self-dual indecomposable module with head and socle isomorphic to $D^{\tilde{f}_i^r(\lambda)}$,

\item\label{l40b}
$[f_i^{(r)}D^\lambda:D^{\tilde{f}_i^r(\lambda)}]=\binom{\phi_i(\lambda)}{r}=\dim\End_{\s_{n+1}}(f_i^{(r)}D^\lambda)$,

\item\label{l40c}
if $D^\psi$ is a composition factor of $f_i^{(r)}D^\lambda$ then $\phi_i(\psi)\leq \phi_i(\lambda)-r$, with equality holding if and only if $\psi=\tilde{f}_i^r(\lambda)$.
\end{enumerate}
\end{lemma}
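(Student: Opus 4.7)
The plan is to deduce Lemma \ref{l40} from Lemma \ref{l39} via the sign-twist duality, which turns induction functors into restriction functors and exchanges conormal nodes with normal nodes. Tensoring with $\sgn$ is a self-equivalence of $F\s_m\md$ for each $m$, sending $D^\mu$ to $D^{\mu^\Mull}$. At the functorial level, this equivalence intertwines $f_i^{(r)}$ with $e_{j(i)}^{(r)}$ for a suitable relabeling $i\mapsto j(i)$ of residues coming from the Mullineux-induced symmetry of the $\widehat{\mathfrak{sl}}_p$-crystal on $p$-regular partitions. Combinatorially, this says that the Mullineux bijection exchanges conormal $i$-nodes of $\la$ with normal $j(i)$-nodes of $\la^\Mull$, and in particular
\[
\phi_i(\la)=\epsilon_{j(i)}(\la^\Mull),\qquad (\tilde{f}_i^r(\la))^\Mull=\tilde{e}_{j(i)}^r(\la^\Mull).
\]

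With this dictionary in hand, each clause of Lemma \ref{l40} becomes the image of the corresponding clause of Lemma \ref{l39} applied to $D^{\la^\Mull}$. Since $-\otimes\sgn$ preserves indecomposability, heads, socles, composition multiplicities, endomorphism algebras and self-duality (using that $\sgn$ is self-dual), items (\ref{l40a})--(\ref{l40c}) follow from (\ref{l39a})--(\ref{l39c}). The same argument transports the identity $f_i^r D^\la\cong(f_i^{(r)}D^\la)^{\oplus r!}$ and the nonvanishing criterion $f_i^{(r)}D^\la\neq 0$ iff $\phi_i(\la)\geq r$ from their $e_{j(i)}^{(r)}D^{\la^\Mull}$ counterparts.

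The main obstacle is verifying the functor isomorphism $(f_i^{(r)}-)\otimes\sgn\cong e_{j(i)}^{(r)}(-\otimes\sgn)$ together with the compatibility of Mullineux with the residue relabeling; both are standard in the crystal framework but require careful bookkeeping with blocks, contents, and the definitions of normal versus conormal nodes. A more direct alternative is to observe that both Lemma \ref{l39} and Lemma \ref{l40} appear as parallel statements in Chapter 11 of \cite{KBook}, where they are developed in tandem using the same crystal-theoretic arguments, and to simply cite them together.
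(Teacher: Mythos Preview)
Your first approach has a genuine error: tensoring with $\sgn$ does \emph{not} turn induction functors into restriction functors. Sign-twist is a self-equivalence of $F\s_n\md$ for each fixed $n$; it commutes with both restriction and induction (up to sign-twisting at the neighbouring level), so it sends $f_i$ to $f_{-i}$ and $e_i$ to $e_{-i}$, not $f_i$ to any $e_j$. This is exactly the content of Lemma~\ref{l17} in the paper, which gives $\phi_i(\la)=\phi_{-i}(\la^\Mull)$ and $\epsilon_i(\la)=\epsilon_{-i}(\la^\Mull)$, not the cross-identity $\phi_i(\la)=\epsilon_{j(i)}(\la^\Mull)$ that your argument requires. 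Consequently the proposed dictionary collapses and Lemma~\ref{l40} cannot be obtained from Lemma~\ref{l39} by sign-twist alone. (One can pass between the $e$- and $f$-statements, but via the adjunction of Lemma~\ref{l48} and further work, not via $\otimes\,\sgn$.)

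Your closing alternative, however, is exactly what the paper does: both Lemma~\ref{l39} and Lemma~\ref{l40} are simply cited from Theorems~11.2.10 and~11.2.11 of \cite{KBook}, with the trivial remark that $r=0$ is immediate. So if you drop the sign-twist argument and keep only the final sentence, your proposal matches the paper's proof.
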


For proofs see Theorems 11.2.10 and 11.2.11 of \cite{KBook} (the case $r=0$ holds trivially).
In particular, for $r=1$, we have that $e_i=e_i^{(1)}$ and $f_i=f_i^{(1)}$. In this case there are other compositions factors of $e_iD^\lambda$ and $f_iD^\lambda$ which are known (see Remark 11.2.9 of \cite{KBook}).

\begin{lemma}\label{l56}
Let $\lambda$ be a $p$-regular partition. If $A$ is a normal node of $\lambda$ of residue $i$ and $\lambda\setminus A$ is $p$-regular then $[e_iD^\lambda:D^{\lambda\setminus A}]$ is equal to the number of normal nodes of $\lambda$ of residue $i$ weakly above $A$.

Similarly if $B$ is a conormal node of $\lambda$ of residue $i$ and $\lambda\cup B$ is $p$-regular then $[f_iD^\lambda:D^{\lambda\cup B}]$ is equal to the number of conormal nodes of $\lambda$ of residue $i$ weakly below $B$.
\end{lemma}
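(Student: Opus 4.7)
The plan is to derive both statements from the modular branching theorem of Kleshchev combined with the divided-power structure already recorded in Lemmas \ref{l39} and \ref{l40}; this is essentially the content of Remark 11.2.9 of \cite{KBook}.

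For the first statement, label the normal $i$-nodes of $\lambda$ from top to bottom as $A_1,\ldots,A_\epsilon$ where $\epsilon:=\epsilon_i(\lambda)$, so that the claim becomes $[e_iD^\lambda:D^{\lambda\setminus A_k}]=k$ whenever $\lambda\setminus A_k$ is $p$-regular. The extreme case $k=\epsilon$ is immediate from Lemma \ref{l39}(ii) with $r=1$, since in that case $\lambda\setminus A_\epsilon=\tilde{e}_i(\lambda)$ and all $\epsilon$ normal $i$-nodes of $\lambda$ lie weakly above $A_\epsilon$. For the intermediate values of $k$, I would exploit the biadjunction between the block functors $e_i$ and $f_i$: by adjunction and self-duality of $D^{\lambda\setminus A_k}$, socle information about $e_iD^\lambda$ translates into head information about $f_iD^{\lambda\setminus A_k}$. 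After removal, $A_k$ becomes a conormal $i$-node of $\mu:=\lambda\setminus A_k$, and a short combinatorial check on the signature sequence used to define normal/conormal nodes shows that the conormal $i$-nodes of $\mu$ weakly below this new node correspond bijectively to the normal $i$-nodes of $\lambda$ weakly above $A_k$, giving exactly $k$ of them.

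The second statement, about the multiplicity of $D^{\lambda\cup B}$ in $f_iD^\lambda$, then follows from the first by the entirely dual argument, with the roles of adding and removing a node exchanged; alternatively, one can obtain it directly from the first by taking duals and tensoring with $\sgn$.

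The main obstacle is converting these Hom-space identities into genuine composition-factor multiplicity statements: since $e_iD^\lambda$ is in general not semisimple, $\dim\Hom_{\s_{n-1}}(D^{\lambda\setminus A},e_iD^\lambda)$ a priori only counts $D^{\lambda\setminus A}$ in the socle. One must therefore either iterate through the socle filtration using the refined structure of $e_iD^\lambda$ provided by Kleshchev's modular branching theorem, or pass to the Grothendieck group and use the LLT-style action of the divided powers $e_i^{(r)}$ to pin down the full composition-multiplicity vector. This is precisely the technical step for which the paper sidesteps a self-contained proof by citing Remark 11.2.9 of \cite{KBook} directly.
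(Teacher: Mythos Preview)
Your assessment that the paper simply defers to Remark 11.2.9 of \cite{KBook} is correct; the paper provides no argument of its own beyond that citation.

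Your sketch of an independent derivation, however, has a structural issue beyond the obstacle you acknowledge at the end. The biadjunction gives
\[
\dim\Hom_{\s_{n-1}}(D^\mu,e_iD^\lambda)=\dim\Hom_{\s_n}(f_iD^\mu,D^\lambda),
\]
and since $f_iD^\mu$ has simple head $D^{\tilde f_i(\mu)}$ by Lemma~\ref{l40}, the right-hand side is $0$ or $1$ --- never $k$ for $k\geq 2$. The count of $k$ conormal $i$-nodes of $\mu$ weakly below $A_k$ that you produce is therefore not what adjunction actually delivers; rather, you are tacitly invoking the \emph{second} assertion of the lemma (the composition multiplicity of $D^\lambda$ in $f_iD^\mu$) to argue for the first, and then claiming the second follows from the first by duality. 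That is circular. A genuine proof needs finer control at the Grothendieck-group level than Lemmas~\ref{l39}, \ref{l40} and~\ref{l48} by themselves supply, which is why the paper, like you, ultimately falls back on the citation.
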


The following properties of $e_i$ and $f_i$ are just a special cases of Lemma 8.2.2(ii) and Theorem 8.3.2(i) of \cite{KBook}.

\begin{lemma}\label{l57}
If $M$ is self dual then so are $e_iM$ and $f_iM$.
\end{lemma}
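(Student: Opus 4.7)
The statement reduces to two standard facts about the module category of a finite group: (a) the restriction and induction functors between $F\s_{n-1}$, $F\s_n$, $F\s_{n+1}$ commute with the duality functor $M\mapsto M^*$, and (b) the duality functor preserves the block decomposition of any $F\s_n$-module. Granting (a) and (b), the argument is immediate: if $M\cong M^*$, then
\[
(e_iM)^*\cong e_i(M^*)\cong e_iM,
\]
because $M\da_{\s_{n-1}}$ is self-dual by (a), and taking the summand lying in the block of content $(b_0,\ldots,b_i-1,\ldots,b_{p-1})$ commutes with duality by (b); the same reasoning applied to induction handles $f_i$.

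For (a), the isomorphism $(M\da_{\s_{n-1}})^*\cong M^*\da_{\s_{n-1}}$ is tautological, since duality is computed at the level of $F$-vector spaces and restriction does not alter the underlying space. The dual analogue $(M\ua^{\s_{n+1}})^*\cong M^*\ua^{\s_{n+1}}$ is the standard fact that induction from a finite-index subgroup of a finite group coincides with coinduction, so in particular it commutes with $F$-linear duality.

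For (b), one must check that if $N$ belongs to a block $B$ of $F\s_n$, then so does $N^*$. This follows from the observation that the anti-automorphism $g\mapsto g^{-1}$ of $F\s_n$ fixes every central idempotent of $F\s_n$, because any permutation is conjugate to its inverse in $\s_n$. Hence the primitive central idempotent $e_B$ acts on $N^*$ as the identity whenever it acts on $N$ as the identity, and the block labels (equivalently, the contents $(b_0,\ldots,b_{p-1})$) of $N$ and $N^*$ coincide.

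None of the steps is technically difficult; the one point requiring a moment of care is (b), and the argument above is the only real content. Once (a) and (b) are in hand, the lemma drops out by applying them to $M\da_{\s_{n-1}}$ and $M\ua^{\s_{n+1}}$ and extracting the relevant block summand.
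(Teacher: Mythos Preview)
Your argument is correct and self-contained. The paper itself does not give a proof at all: it simply remarks that this lemma (together with the adjointness of $e_i$ and $f_i$) is a special case of Lemma~8.2.2(ii) and Theorem~8.3.2(i) of Kleshchev's book. Your approach unpacks exactly what is needed: duality commutes with restriction and with induction (the latter via the standard induction $\cong$ coinduction identification for finite groups), and duality preserves the block decomposition because the anti-involution $g\mapsto g^{-1}$ fixes the central idempotents of $F\s_n$ (every permutation being conjugate to its inverse). From these two facts the self-duality of each block summand $e_iM$ of $M\da_{\s_{n-1}}$ and $f_iM$ of $M\ua^{\s_{n+1}}$ is immediate. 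So you recover by hand what the paper outsources to a reference; the gain is that your argument is elementary and requires no external citation, while the paper's one-line appeal to \cite{KBook} is of course shorter on the page.
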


\begin{lemma}\label{l48}
The functors $e_i$ and $f_i$ are left and right adjoint of each others.
\end{lemma}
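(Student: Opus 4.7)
The plan is to reduce the claim to the standard fact that $\mathrm{Ind}_{\s_{n-1}}^{\s_n}$ and $\mathrm{Res}^{\s_n}_{\s_{n-1}}$ are biadjoint. For any inclusion of finite groups the group algebra is free of finite rank over the subalgebra and the extension is Frobenius symmetric, so induction is simultaneously the left and the right adjoint of restriction. Combined with the definition of $e_i$ and $f_i$ as compositions of restriction/induction with projection onto blocks of a prescribed content, the two desired adjunctions should fall out formally.

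Concretely, the first step is to reduce to the case where $M\in F\s_n\md$ lives in blocks of a single content $c=(c_0,\ldots,c_{p-1})$ and $N\in F\s_{n-1}\md$ lives in blocks of a single content $c'$. If $c'\ne c-\alpha_i$ (with $\alpha_i$ the $i$-th standard basis vector), then $e_iM$ sits in blocks of content $c-\alpha_i\ne c'$ while $f_iN$ sits in blocks of content $c'+\alpha_i\ne c$, so both $\Hom_{\s_{n-1}}(e_iM,N)$ and $\Hom_{\s_n}(M,f_iN)$ vanish. So one only needs to treat $c'=c-\alpha_i$, in which case $e_iM$ is exactly the projection of $\mathrm{Res}(M)$ onto blocks of content $c'$ and $f_iN$ is the projection of $\mathrm{Ind}(N)$ onto blocks of content $c$.

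The second step is the core computation. Writing $\mathrm{pr}_{c'}$ for the block projection, I would use that morphisms between modules in different blocks are zero to get
\[\Hom_{\s_{n-1}}(e_iM,N)=\Hom_{\s_{n-1}}(\mathrm{pr}_{c'}\mathrm{Res}(M),N)=\Hom_{\s_{n-1}}(\mathrm{Res}(M),N),\]
the last step because $N$ already lies in blocks of content $c'$. Then the ind-res adjunction gives $\Hom_{\s_n}(M,\mathrm{Ind}(N))$, and projecting again using that $M$ lies in blocks of content $c$ yields $\Hom_{\s_n}(M,\mathrm{pr}_c\mathrm{Ind}(N))=\Hom_{\s_n}(M,f_iN)$. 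The naturality of these isomorphisms is inherited from the naturality of the ind-res adjunction, so $f_i$ is right adjoint to $e_i$. The reverse adjunction $\Hom_{\s_{n-1}}(N,e_iM)\cong\Hom_{\s_n}(f_iN,M)$ is obtained by the mirror-image argument, now invoking the other direction of biadjointness of $\mathrm{Ind}$ and $\mathrm{Res}$.

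There is really no serious obstacle here: the only thing to be careful about is the bookkeeping of contents and that block projections commute with $\Hom$ on the side where they land in a matching block. The substantive input is biadjointness of induction and restriction for finite groups, which is standard and in any case is the content of Lemma 8.2.2(ii) and Theorem 8.3.2(i) of \cite{KBook} cited in the statement.
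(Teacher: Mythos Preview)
Your argument is correct: biadjointness of $\mathrm{Ind}$ and $\mathrm{Res}$ together with the observation that block projections can be absorbed into $\Hom$ on the appropriate side gives both adjunctions, and your case split on contents handles the vanishing situations cleanly.

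As for comparison with the paper: the paper does not actually give a proof of this lemma. It merely states, just before Lemmas~\ref{l57} and~\ref{l48}, that these are special cases of Lemma~8.2.2(ii) and Theorem~8.3.2(i) of \cite{KBook}. So your write-up is more explicit than what the paper provides. One small misattribution in your final paragraph: the cited results in \cite{KBook} are stated directly for the functors $e_i$ and $f_i$ (in the general Hecke-algebraic setting of that book), not for ordinary induction and restriction of group algebras; so rather than being the input to your reduction, they already contain the conclusion you are aiming for. That said, your elementary route via $\mathrm{Ind}/\mathrm{Res}$ biadjointness is a perfectly good independent derivation and avoids invoking the heavier machinery of \cite{KBook}.
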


The first part of the next lemma follows from Lemma 5.2.3 of \cite{KBook}. The second part follows by the definition of $\tilde{e}^r_i$ and $\tilde{f}^r_i$ and from Lemmas \hyperref[l39c]{\ref*{l39}\ref*{l39c}} and \hyperref[l40c]{\ref*{l40}\ref*{l40c}}.

\begin{lemma}\label{l47}
For $r\geq 0$ and $p$-regular partitions $\lambda,\nu$ we have that $\tilde{e}^r_i(\lambda)=D^\nu$ if and only if $\tilde{f}^r_i(\nu)=\lambda$. Further in this case $\epsilon_i(\nu)=\epsilon_i(\lambda)-r$ and $\phi_i(\nu)=\phi_i(\lambda)+r$.
\end{lemma}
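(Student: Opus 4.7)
The plan is to treat the two assertions separately and in order, since the second builds on the first.

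For the first assertion (the claim is surely $\tilde e^r_i(\lambda)=\nu$ iff $\tilde f^r_i(\nu)=\lambda$, the ``$D^\nu$'' being a typo), I would reduce to the case $r=1$ by induction and then appeal to Lemma~5.2.3 of \cite{KBook} as the paper directs. The point to verify by hand for $r=1$ is purely combinatorial: if $A$ is the bottom normal $i$-node of $\lambda$, then in $\lambda\setminus A$ the same node $A$ is the top conormal $i$-node, and symmetrically. Using the standard signature/reduced-word encoding of normal and conormal $i$-nodes (the $+/-$-sequence read from the bottom up along the rim of $\lambda$ with removable $i$-nodes contributing $-$ and addable $i$-nodes contributing $+$, then cancelling adjacent $-+$ pairs), removing the bottom uncancelled $-$ (which is $A$) leaves a signature whose topmost uncancelled $+$ sits exactly at the position of $A$. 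This is precisely the content of Lemma~5.2.3 of \cite{KBook}. For the inductive step, set $\lambda^{(0)}=\lambda$ and $\lambda^{(j)}=\tilde e_i(\lambda^{(j-1)})$; by the $r=1$ case each step can be reversed by $\tilde f_i$ applied to the top conormal $i$-node, which is the one just removed, and iterating shows $\tilde f_i^r(\nu)=\lambda$. The converse direction is entirely analogous.

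For the second assertion, I would feed the first part into Lemmas~\hyperref[l39c]{\ref*{l39}\ref*{l39c}} and \hyperref[l40c]{\ref*{l40}\ref*{l40c}}. Since $\tilde e_i^r(\lambda)=\nu$, Lemma~\hyperref[l39a]{\ref*{l39}\ref*{l39a}} gives that $D^\nu$ appears (as head) in $e_i^{(r)}D^\lambda$; part~\ref{l39c} of the same lemma then forces $\epsilon_i(\nu)\leq \epsilon_i(\lambda)-r$ with equality because $\nu=\tilde e_i^r(\lambda)$, yielding $\epsilon_i(\nu)=\epsilon_i(\lambda)-r$. By the first part of the present lemma we also have $\tilde f_i^r(\nu)=\lambda$, so the dual argument using Lemmas~\hyperref[l40a]{\ref*{l40}\ref*{l40a}} and~\hyperref[l40c]{\ref*{l40}\ref*{l40c}} applied to $f_i^{(r)}D^\nu$ gives $\phi_i(\lambda)=\phi_i(\nu)-r$, i.e.\ $\phi_i(\nu)=\phi_i(\lambda)+r$.

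The only nontrivial step is the $r=1$ combinatorial identity underlying the first assertion, and even this is standard once one unpacks the signature definition of normal/conormal nodes; the rest is bookkeeping, and the second assertion is essentially a direct substitution into the ``equality cases'' of Lemmas~\ref{l39} and~\ref{l40}. I do not expect any genuine obstacle beyond cleanly matching the two equivalent formulations of normal and conormal nodes that the paper has cited.
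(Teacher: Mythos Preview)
Your proposal is correct and follows essentially the same approach as the paper: the first assertion is obtained from Lemma~5.2.3 of \cite{KBook} (you add the explicit induction on $r$ and unpack the signature argument, but the paper simply cites that lemma), and the second assertion is read off from the equality cases in Lemmas~\ref{l39}\ref{l39c} and~\ref{l40}\ref{l40c} together with the definition of $\tilde e_i^r$ and $\tilde f_i^r$. Your observation that ``$D^\nu$'' should read ``$\nu$'' is also correct.
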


When considering the number of normal and conormal nodes of a partition we have the following result (see Lemma 2.8 of \cite{m1}, for $p$-regular partitions it also follows from Lemmas \ref{l45}, \ref{l39}, \ref{l40} and Corollary 4.2 of \cite{k3}):

\begin{lemma}\label{l52}
Any partition has 1 more conormal node than it has normal nodes.
\end{lemma}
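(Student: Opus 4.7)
My plan is to reduce the statement to the classical combinatorial fact that every partition has exactly one more addable node than removable node, using the standard signature-cancellation description of normal and conormal nodes. Fix a residue $i$ and list, in a fixed top-to-bottom order along the rim of $\lambda$, a $+$ for each addable $i$-node and a $-$ for each removable $i$-node; then repeatedly cancel any adjacent opposite-sign pair in the appropriate order until no further cancellation is possible. As recalled in Section 11.1 of \cite{KBook} and Section 2 of \cite{bk2}, the surviving $-$'s correspond bijectively to the normal $i$-nodes of $\lambda$ and the surviving $+$'s to the conormal $i$-nodes. Since each cancellation removes exactly one $+$ and one $-$, we get
\[
\phi_i(\lambda)-\epsilon_i(\lambda)=A_i(\lambda)-R_i(\lambda),
\]
where $A_i(\lambda)$ and $R_i(\lambda)$ denote the total numbers of addable and removable $i$-nodes of $\lambda$. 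Summing over all residues then reduces the lemma to the single identity $\sum_i A_i(\lambda)-\sum_i R_i(\lambda)=1$.

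For this identity I would group the rows of $\lambda$ into maximal blocks of equal length. If $\lambda$ has $k$ distinct positive parts (equivalently, $k$ such blocks), then a cell is removable exactly when it is the rightmost cell of the bottom row of some block, giving $k$ removable nodes; and a position is addable exactly when it lies at the right end of the top row of some block or is the new cell at $(\ell(\lambda)+1,1)$ starting a fresh row below $\lambda$, giving $k+1$ addable nodes. Hence the difference is always $1$, proving the reduction.

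The only step with any real content is the signature-cancellation description of normal and conormal nodes, which is standard; once that is in hand, the rest is elementary bookkeeping, so I do not anticipate any serious obstacle. The parenthetical hint notes an alternative representation-theoretic proof for $p$-regular partitions via Lemmas \ref{l45}, \ref{l39}, \ref{l40} together with Corollary 4.2 of \cite{k3} (essentially comparing the head of $D^\lambda{\da_{\s_{n-1}}}$ with that of $D^\lambda{\ua^{\s_{n+1}}}$), but the combinatorial route sketched above has the advantage of treating arbitrary partitions uniformly.
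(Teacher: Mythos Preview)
Your argument is correct. The paper does not actually give its own proof of this lemma: it simply cites Lemma~2.8 of \cite{m1} and remarks that, for $p$-regular partitions, the statement can alternatively be deduced from Lemmas~\ref{l45}, \ref{l39}, \ref{l40} and Corollary~4.2 of \cite{k3}. Your signature-cancellation argument---reducing $\sum_i(\phi_i(\lambda)-\epsilon_i(\lambda))$ to the difference between the numbers of addable and removable nodes, and then counting the latter via the blocks of equal parts---is exactly the standard combinatorial proof and is essentially what the cited reference does, so there is nothing substantive to compare.
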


Since the modules $e_iD^\lambda$ (or the modules $f_iD^\lambda$) lie in pairwise distinct blocks we have the following result by Lemmas \ref{l45}, \ref{l39} and \ref{l40}.

\begin{lemma}\label{l53}
For a $p$-regular partition $\lambda\vdash n$ we have that
\[\dim\End_{\s_{n-1}}(D^\lambda\da_{\s_{n-1}})=\epsilon_0(\lambda)+\ldots+\epsilon_{p-1}(\lambda)\]
and
\[\dim\End_{\s_{n+1}}(D^\lambda\ua^{\s_{n+1}})=\phi_0(\lambda)+\ldots+\phi_{p-1}(\lambda).\]
\end{lemma}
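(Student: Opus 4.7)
The plan is to recognize that this lemma is a direct consequence of the three cited lemmas together with the block-theoretic observation preceding the statement.

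First I would handle the restriction formula. By Lemma \ref{l45}, we have the direct sum decomposition
\[D^\lambda\da_{\s_{n-1}}\cong e_0 D^\lambda\oplus\cdots\oplus e_{p-1}D^\lambda.\]
As remarked just before the statement of the lemma, the summands $e_iD^\lambda$ lie in pairwise distinct blocks of $F\s_{n-1}$ (their block contents differ in the $i$-th coordinate), so $\Hom_{\s_{n-1}}(e_iD^\lambda, e_jD^\lambda)=0$ for $i\neq j$. Therefore the endomorphism algebra splits:
\[\End_{\s_{n-1}}(D^\lambda\da_{\s_{n-1}})\cong\bigoplus_{i=0}^{p-1}\End_{\s_{n-1}}(e_iD^\lambda).\]

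Next I would compute each summand using Lemma \ref{l39}. Since $e_i=e_i^{(1)}$ (as $e_i^1\cong (e_i^{(1)})^{\oplus 1!}$), part \ref{l39b} of Lemma \ref{l39} with $r=1$ yields
\[\dim\End_{\s_{n-1}}(e_iD^\lambda)=\binom{\epsilon_i(\lambda)}{1}=\epsilon_i(\lambda)\]
whenever $\epsilon_i(\lambda)\geq 1$, and if $\epsilon_i(\lambda)=0$ then $e_iD^\lambda=0$ by the non-vanishing criterion in Lemma \ref{l39}, so its endomorphism dimension is also $0=\epsilon_i(\lambda)$. Summing over $i$ gives the first formula.

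The second formula is completely parallel: apply Lemma \ref{l45} to $D^\lambda\ua^{\s_{n+1}}$, use that the summands $f_iD^\lambda$ sit in pairwise distinct blocks of $F\s_{n+1}$, and then invoke Lemma \ref{l40}\ref{l40b} to obtain $\dim\End_{\s_{n+1}}(f_iD^\lambda)=\phi_i(\lambda)$. There is no real obstacle here — the only subtle point is the block-separation argument that lets us split the endomorphism algebra as a direct sum, and this was already flagged in the sentence introducing the lemma.
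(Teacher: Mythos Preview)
Your proof is correct and follows exactly the approach indicated in the paper: decompose via Lemma~\ref{l45}, use the block-separation observation to split the endomorphism algebra, and apply Lemmas~\ref{l39}\ref{l39b} and~\ref{l40}\ref{l40b} with $r=1$ to evaluate each summand.
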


Partitions with only one normal node will play a special role in this paper. In view of the previous lemma and since irreducible modules of symmetric groups are self-dual, a $p$-regular partition $\la\vdash n$ has only one normal node if and only if $D^\la\da_{\s_{n-1}}$ is irreducible. Such partitions are also called JS-partitions and 
they can be classified as follows (see Section 4 of \cite{JS} and Theorem D of \cite{k2}):

\begin{lemma}
Let $\lambda=(a_1^{b_1},\ldots,a_h^{b_h})$ with $a_1>a_2>\ldots>a_h\geq 1$ and $1\leq b_i\leq p-1$ for $1\leq i\leq h$. Then $\lambda$ is a JS-partition if and only if $a_i-a_{i+1}+b_i+b_{i+1}\equiv 0\Md p$ for each $1\leq i<h$.
\end{lemma}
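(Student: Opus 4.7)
The plan is to compute the number of normal nodes of $\lambda$ directly. Since, as noted in the paragraph above, a $p$-regular partition is a JS-partition exactly when it has a unique normal node, this reduces the problem to a combinatorial count.

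In top-to-bottom order along the rim, the addable and removable nodes of $\lambda$ form the sequence $A_1,R_1,A_2,R_2,\ldots,A_h,R_h,A_{h+1}$, where $R_i=(r_i,a_i)$ and $A_i=(r_{i-1}+1,a_i+1)$ for $1\le i\le h$ (with $r_i:=b_1+\cdots+b_i$ and $r_0:=0$), and $A_{h+1}=(r_h+1,1)$. A direct computation gives
\[\res(A_i)-\res(R_{i+1})\equiv a_i-a_{i+1}+b_i+b_{i+1}\pmod p\]
for $1\le i\le h-1$, so the stated congruences are equivalent to $\res(A_i)\equiv\res(R_{i+1})\pmod p$ for every such $i$. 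Also $\res(A_i)-\res(R_i)=b_i\not\equiv 0\pmod p$, so an addable node and the immediately following removable node never share a residue. Normal nodes are detected by the standard parenthesis matching performed on each residue's subsequence: processing left to right, push each $A_i$ onto the stack of its residue and on each $R_i$ pop that stack if non-empty; the unpopped $R_i$'s are the normal nodes and the un-popped $A_i$'s at the end are the conormal nodes.

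For $(\Leftarrow)$, assume all congruences hold. I claim by induction on $j$ that after processing $R_j$ the residue-$r$ stack is empty for $r\not\equiv\res(A_j)\pmod p$ and equals $\{A_j\}$ for $r\equiv\res(A_j)$. For $j=1$: $R_1$'s residue differs from $\res(A_1)$, so $R_1$ becomes a normal node while $A_1$ remains on its stack. For the step: push $A_{j+1}$ at position $2j+1$, noting that the $j$-th congruence together with $b_j,b_{j+1}\not\equiv 0$ gives $\res(A_{j+1})\not\equiv\res(A_j)$, so $A_{j+1}$ lands on a different stack. Then at position $2j+2$, the node $R_{j+1}$ has residue $\res(A_j)$ by hypothesis and pops $A_j$, restoring the invariant. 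After pushing $A_{h+1}$, exactly the two addables $A_h$ and $A_{h+1}$ remain on the stacks, giving one normal node ($R_1$) and two conormal nodes; hence $\lambda$ is JS.

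For $(\Rightarrow)$, suppose $\lambda$ is JS. Since $R_1$ is always a normal node, every $R_{j+1}$ with $1\le j\le h-1$ must be matched. I induct on $j$: by the inductive hypothesis the invariant from the previous paragraph holds just before $R_{j+1}$ is processed, so (together with the newly-pushed $A_{j+1}$) the only non-empty stacks at that moment are those of residues $\res(A_j)$ and $\res(A_{j+1})$. Since $\res(R_{j+1})-\res(A_{j+1})=-b_{j+1}\not\equiv 0\pmod p$, the node $R_{j+1}$ can pop only from the stack of residue $\res(A_j)$, forcing $\res(R_{j+1})\equiv\res(A_j)$, which is the $j$-th congruence. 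The main obstacle is ruling out that some earlier addable $A_k$ with $k<j$ of residue $\res(R_{j+1})$ has been left unmatched and could provide an alternative match; this is precisely what the invariant (a net push/pop bookkeeping on each residue stack) excludes.
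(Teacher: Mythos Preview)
Your argument is correct. The residue computation $\res(A_i)-\res(R_{i+1})\equiv a_i-a_{i+1}+b_i+b_{i+1}$ is right, and the stack invariant you maintain is exactly what is needed. One point worth making explicit in the $(\Rightarrow)$ direction: once you have deduced $\res(R_{j+1})=\res(A_j)$, it follows that $\res(A_{j+1})-\res(A_j)=\res(A_{j+1})-\res(R_{j+1})=b_{j+1}\not\equiv 0$, so $A_j$ and $A_{j+1}$ really sit on distinct stacks and $R_{j+1}$ pops $A_j$ rather than $A_{j+1}$; this is what restores the invariant. You implicitly use this but it would be cleaner to say it.

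The paper does not prove this lemma at all: it simply cites Section~4 of Jantzen--Seitz and Theorem~D of Kleshchev. Your proof is therefore genuinely different in that it is self-contained and purely combinatorial, working directly from the signature/parenthesis description of normal nodes. The cited arguments go via the representation-theoretic characterisation (irreducibility of $D^\lambda\da_{\s_{n-1}}$) and a more structural analysis; what your approach buys is that it needs nothing beyond the bracket-matching definition of normality, at the cost of a slightly delicate bookkeeping induction.
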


We will often consider filtrations for certain modules. For arbitrary modules $M_1,\ldots,M_h$ we will write $M\sim M_1|\ldots|M_h$ if $M$ has a filtration with factors $M_1,\ldots,M_h$ counted from the bottom. For irreducible modules $D_1,\ldots,D_h$ we will write $M=D_1|\dots|D_h$ if $M$ is a uniserial module with composition factors $D_1,\ldots,D_h$ counted from the bottom.

Certain results presented in this paper hold only when $\la$ and $\la^\Mull$ have enough parts. For any partition $\lambda$ we will write $h(\lambda)$ for the height of $\la$, that is the number of its parts.

\section{Permutation modules}

In this section we study the structure of certain small permutation modules $M^\alpha$.

\begin{lemma}\label{l1}
Let $1\leq k<p$ and $2k\leq n$. Then
\[M^{(n-k,k)}\sim S^{(n-k,k)}|M^{(n-k+1,k-1)}.\]
\end{lemma}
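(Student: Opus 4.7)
The plan is to exhibit a short exact sequence
\[0 \to S^{(n-k,k)} \to M^{(n-k,k)} \xrightarrow{\psi} M^{(n-k+1,k-1)} \to 0,\]
from which the desired filtration (with $S^{(n-k,k)}$ on the bottom and $M^{(n-k+1,k-1)}$ on top) follows immediately.

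First I would identify $M^{(n-j,j)}$ with the permutation module on the set of $j$-subsets of $\{1,\ldots,n\}$, for $j\in\{k-1,k\}$, and define the $F\Sigma_n$-linear map
\[\psi:M^{(n-k,k)}\to M^{(n-k+1,k-1)},\qquad A\mapsto\sum_{B\subset A,\,|B|=k-1}B.\]
Then I would verify that $\psi$ annihilates the polytabloid generators of $S^{(n-k,k)}$: for a two-row shape every polytabloid is an alternating sum over the $2^k$ row--column swaps, and after applying $\psi$ the resulting $(k-1)$-subsets pair up and cancel. This gives $S^{(n-k,k)}\subseteq\ker\psi$.

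Next I would establish the surjectivity of $\psi$. Applying $\psi$ to $\theta(B):=\sum_{A\supset B,\,|A|=k}A$ for a fixed $(k-1)$-subset $B$ yields
\[\psi\theta(B)=(n-k+1)B+\sum_{B'}B',\]
where $B'$ runs over the $(k-1)$-subsets differing from $B$ in exactly one element. Iterating the construction on the "neighbour'' subsets $B'$ and inverting the scalars that arise, one reaches every $(k-1)$-subset; the hypothesis $k<p$ is exactly what guarantees that these scalars are non-zero in $F$.

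Finally, the dimension count
\[\dim M^{(n-k,k)}-\dim M^{(n-k+1,k-1)}=\binom{n}{k}-\binom{n}{k-1}=\dim S^{(n-k,k)},\]
valid since $n\geq 2k$, forces $\dim\ker\psi=\dim S^{(n-k,k)}$, which combined with the inclusion above gives $\ker\psi=S^{(n-k,k)}$. The main technical step is the surjectivity calculation, which is precisely where the hypothesis $k<p$ enters; the remaining verifications are formal.
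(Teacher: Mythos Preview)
Your overall strategy---exhibit the short exact sequence $0\to S^{(n-k,k)}\to M^{(n-k,k)}\xrightarrow{\psi}M^{(n-k+1,k-1)}\to 0$---is the standard one (the paper itself simply cites Brundan--Kleshchev), and steps 1, 2, and 4 are fine. The problem is your surjectivity argument.

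The computation $\psi\theta(B)=(n-k+1)B+\sum_{B'}B'$ is correct, but the sentence ``iterating \ldots and inverting the scalars that arise \ldots the hypothesis $k<p$ is exactly what guarantees that these scalars are non-zero'' does not hold up. The scalar $n-k+1$ you actually produced has nothing to do with $k<p$: for instance with $p=5$, $n=8$, $k=4$ one has $n-k+1\equiv 0\pmod 5$, so $\psi\theta(B)$ is just the neighbour sum and no obvious iteration recovers $B$. More generally, the operator $\psi\theta=(n-k+1)I+N$ (with $N$ the Johnson adjacency operator) is not invertible for all $n$, so this line of attack cannot succeed without substantial further work.

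The clean fix is to reverse the logic of steps 3 and 4: prove $\ker\psi=S^{(n-k,k)}$ directly, and \emph{deduce} surjectivity from the dimension count. By James' Kernel Intersection Theorem, $S^{(n-k,k)}=\bigcap_{v=0}^{k-1}\ker\psi_{1,v}$, where $\psi_{1,v}:M^{(n-k,k)}\to M^{(n-v,v)}$ sends $A$ to $\sum_{C\subset A,\,|C|=v}C$. Writing $\tilde\psi_{1,v}:M^{(n-k+1,k-1)}\to M^{(n-v,v)}$ for the analogous map, a one-line count gives $\tilde\psi_{1,v}\circ\psi=(k-v)\,\psi_{1,v}$. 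Since $1\le k-v\le k<p$, the scalar $k-v$ is invertible in $F$, so $\ker\psi\subseteq\ker\psi_{1,v}$ for every $v$, and hence $\ker\psi=\bigcap_v\ker\psi_{1,v}=S^{(n-k,k)}$. Now your dimension identity $\binom{n}{k}-\binom{n}{k-1}=\dim S^{(n-k,k)}$ forces $\psi$ to be surjective. This is where $k<p$ genuinely enters.
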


\begin{proof}
See for example Lemmas 3.1 and 3.2 of \cite{bk5}.
\end{proof}

\begin{lemma}\label{l25}
Let $p=5$ and $n\equiv 1\Md 5$ with $n\geq 6$. Then
\begin{align*}
Y^{(n)}&=D^{(n)}=S^{(n)},\\
Y^{(n-1,1)}&=D^{(n-1,1)}=S^{(n-1,1)},\\
Y^{(n-2,2)}&=\overbrace{D^{(n)}|D^{(n-2,2)}}^{S^{(n-2,2)}}|\overbrace{D^{(n)}}^{S^{(n)}},\\
Y^{(n-3,3)}&=D^{(n-3,3)}=S^{(n-3,3)},\\
Y^{(n-2,1^2)}&=D^{(n-2,1^2)}=S^{(n-2,1^2)},\\
Y^{(n-3,2,1)}&\sim\overbrace{D^{(n-2,2)}|D^{(n-3,2,1)}}^{S^{(n-3,2,1)}}|\overbrace{D^{(n)}|D^{(n-2,2)}}^{S^{(n-2,2)}},\\
Y^{(n-3,1^3)}&=D^{(n-3,1^3)}=S^{(n-3,1^3)},
\end{align*}
Further
\begin{align*}
M^{(n)}\cong& Y^{(n)},\\
M^{(n-1,1)}\cong& Y^{(n-1,1)}\oplus Y^{(n)},\\
M^{(n-2,2)}\cong& Y^{(n-2,2)}\oplus Y^{(n-1,1)},\\
M^{(n-3,3)}\cong& Y^{(n-3,3)}\oplus Y^{(n-2,2)}\oplus Y^{(n-1,1)},\\
M^{(n-2,1^2)}\cong& Y^{(n-2,1^2)}\oplus Y^{(n-2,2)}\oplus (Y^{(n-1,1)})^2,\\
M^{(n-3,2,1)}\cong& Y^{(n-3,2,1)}\oplus Y^{(n-2,1^2)}\oplus Y^{(n-3,3)}\oplus Y^{(n-2,2)}\oplus (Y^{(n-1,1)})^2,\\
M^{(n-3,1^3)}\cong& Y^{(n-3,1^3)}\oplus (Y^{(n-3,2,1)})^2\oplus (Y^{(n-2,1^2)})^3\oplus Y^{(n-3,3)}\\
&\oplus Y^{(n-2,2)}\oplus (Y^{(n-1,1)})^3.
\end{align*}
\end{lemma}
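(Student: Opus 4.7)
The plan is to bootstrap from $M^{(n)} = Y^{(n)} = D^{(n)}$ and work outward through partitions of increasing complexity. For the two-row cases Lemma~\ref{l1} provides a Specht filtration; for the partitions with smaller parts I would use the isomorphism $M^{(\mu,1)} \cong M^\mu \ua_{\s_{|\mu|}}^{\s_n}$ to reduce to cases already computed. At each stage I match composition factor multiplicities against Lemma~\ref{LYoung} to extract Young summands, using the characterization of $Y^\lambda$ as the unique direct summand of $M^\lambda$ containing $S^\lambda$.

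First, for the two-row partitions I iterate Lemma~\ref{l1} to obtain $M^{(n-k,k)} \sim S^{(n-k,k)} | S^{(n-k+1,k-1)} | \cdots | S^{(n)}$ for $k \leq 3$. The Specht composition factors come from James's two-row decomposition theorem combined with the condition $n \equiv 1 \Md 5$: $S^{(n-1,1)}$ is irreducible since $5 \nmid n$; $S^{(n-2,2)}$ has composition factors $D^{(n)}$ and $D^{(n-2,2)}$ (uniserial with $D^{(n-2,2)}$ in the head); and $S^{(n-3,3)}$ is irreducible since the residue condition excludes both $D^{(n)}$ and $D^{(n-1,1)}$. Reading off $Y$-summands via Lemma~\ref{LYoung} then gives the displayed decompositions of $M^{(n-1,1)}, M^{(n-2,2)}, M^{(n-3,3)}$ and the claimed structures of $Y^{(n-1,1)}, Y^{(n-2,2)}, Y^{(n-3,3)}$. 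The non-split structure $D^{(n)} | D^{(n-2,2)} | D^{(n)}$ of $Y^{(n-2,2)}$ is forced by self-duality of the Young module.

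Second, for the three-or-more-row partitions I would use $M^{(\mu,1)} \cong M^\mu \ua_{\s_{|\mu|}}^{\s_n}$ to reduce: $M^{(n-2,1^2)}$ via $M^{(n-2,1)}$ (after a shift in $n$), $M^{(n-3,1^3)}$ via $M^{(n-3,1^2)}$, and $M^{(n-3,2,1)}$ via $M^{(n-3,2)}$. Inducing the already-known Young decompositions and matching against the possible summands $Y^\mu$ with $\mu \trianglerighteq \lambda$ in $M^\lambda$ pins down the multiplicities. The Specht composition factors of $S^{(n-2,1^2)}, S^{(n-3,1^3)}, S^{(n-3,2,1)}$ come from column-removal (Carter--Payne) combined with the residue condition; in particular $S^{(n-2,1^2)}$ and $S^{(n-3,1^3)}$ turn out irreducible, while $S^{(n-3,2,1)}$ is uniserial with factors $D^{(n-2,2)}$ and $D^{(n-3,2,1)}$.

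The main obstacle is pinning down the precise filtration structures of $Y^{(n-2,2)}$ and $Y^{(n-3,2,1)}$, and of the non-trivial Specht modules, rather than just their composition factor lists. Self-duality of Young modules constrains the socle to match the head, and the Specht filtration of $Y^{(n-3,2,1)}$ with top quotient $S^{(n-3,2,1)}$ and further factors $S^\mu$ with $\mu \rhd (n-3,2,1)$ helps; but extracting the exact four-step layer structure will likely require either direct computation of $\Ext^1$ between the relevant simples or an appeal to the Jantzen/Schaper sum formula.
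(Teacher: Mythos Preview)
Your overall strategy---bootstrapping through the two-row cases via Lemma~\ref{l1} and then treating the remaining partitions by induction from $\s_{n-1}$---is close to the paper's, and the composition-factor bookkeeping you describe is correct. Two points are worth flagging.

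First, the paper organizes everything around block decomposition rather than Carter--Payne or column removal: since $n\equiv 1\Md 5$, the simples $D^{(n)},D^{(n-2,2)},D^{(n-3,2,1)}$ lie in a single block and every other simple on the list is alone in its block. This immediately forces $S^{(n-1,1)},S^{(n-3,3)},S^{(n-2,1^2)},S^{(n-3,1^3)}$ to be irreducible with no further work, and reduces the remaining decomposition numbers to the single nontrivial block, which the paper reads off from James's Theorem~24.15 and \cite{jw}. Your route via residue/column-removal arguments reaches the same conclusions but is more laborious.

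Second, your ``main obstacle'' at $Y^{(n-3,2,1)}$ is exactly where the paper's argument diverges from yours. Note first that the lemma only claims a Specht \emph{filtration} $Y^{(n-3,2,1)}\sim S^{(n-3,2,1)}\,|\,S^{(n-2,2)}$ (the $\sim$ notation), not a four-step uniserial structure, so you are asking for more than is needed. The paper obtains this filtration without any $\Ext$ or Jantzen--Schaper input: since $D^{(n-3,2)}\cong S^{(n-3,2)}$ is a block summand of $M^{(n-3,2)}$, its induction is a summand of $M^{(n-3,2,1)}$; the Specht branching rule gives $S^{(n-3,2)}\ua^{\s_n}\sim S^{(n-3,2,1)}\,|\,S^{(n-3,3)}\,|\,S^{(n-2,2)}$, and splitting by blocks separates off $f_1D^{(n-3,2)}\cong S^{(n-3,3)}$ to leave $f_3D^{(n-3,2)}\sim S^{(n-3,2,1)}\,|\,S^{(n-2,2)}$. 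This module is indecomposable by Lemma~\ref{l40}, contains $S^{(n-3,2,1)}$, and is a summand of $M^{(n-3,2,1)}$, hence equals $Y^{(n-3,2,1)}$ by Lemma~\ref{LYoung}. Identifying the difficult Young module as $f_iD^\nu$ and invoking the indecomposability of $f_iD^\nu$ is the clean idea your proposal is missing.
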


\begin{proof}
Notice first that all the considered simple modules lie in pairwise distinct blocks, apart for $D^{(n)}$, $D^{(n-2,2)}$ and $D^{(n-3,2,1)}$ all three of which lie in the same block. From Theorem 24.15 of \cite{JamesBook} and from \cite{jw} we have that $[S^{(n-2,2)}:D^{(n)}]=1$, $[S^{(n-3,2,1)}:D^{(n)}]=0$ and $[S^{(n-3,2,1)}:D^{(n-2,2)}]=1$. It follows that the structure of the Specht modules is as given in the lemma. Further, since the Young modules are indecomposable and self-dual it is easy to see that the Young modules structure is also as given in the lemma, apart possibly for the structure of $Y^{(n-3,2,1)}$.

From block decomposition we have that $D^{(n-3,2)}\cong S^{(n-3,2)}$ is a direct summand of $M^{(n-3,2)}$. In particular $D^{(n-3,2)}\ua^{\s_n}$ is a direct summand of $M^{(n-3,2,1)}$. Notice that since $n\equiv 1\Md 5$,
\[\begin{tikzpicture}
\draw (0,0) node {0};
\draw (0,-0.4) node {4};
\draw (0,-0.8) node {3};
\draw (0.4,0) node {1};
\draw (0.4,-0.4) node {0};
\draw (0.8,0) node {2};
\draw (0.8,-0.4) node {1};
\draw (1.6,0) node {2};
\draw (2,0) node {3};

\draw (-1.5,-0.2) node {$(n-3,2)=$};

\draw (2.3,-0.4) node {.};

\draw (-0.2,0.2) -- (1.8,0.2) -- (1.8,-0.2) -- (0.6,-0.2) -- (0.6,-0.6) -- (-0.2,-0.6) -- (-0.2,0.2);
\end{tikzpicture}\]
So, from Lemmas \ref{l45} and \ref{l40}, from Corollary 17.14 of \cite{JamesBook} and from block decomposition we have that
\begin{align*}
D^{(n-3,2)}\ua^{\s_n}&\cong S^{(n-3,2)}\ua^{\s_n}\\
&\sim S^{(n-3,2,1)}|S^{(n-3,3)}|S^{(n-2,2)}\\
&\sim (\overbrace{S^{(n-3,2,1)}|S^{(n-2,2)}}^{f_3D^{(n-3,2)}})\oplus \overbrace{S^{(n-3,3)}}^{f_1D^{(n-3,2)}}.
\end{align*}
Since $f_3D^{(n-3,2)}$ is indecomposible by Lemma \ref{l40}, it follows that $f_3D^{(n-3,2)}\cong Y^{(n-3,2,1)}$ by Lemma \ref{LYoung}.

The multiplicities of the Young modules as direct summands of the modules $M^\alpha$ follow by comparing multiplicities of composition factors and from 14.1 of \cite{JamesBook} or by Section 3 of \cite{h} if $\alpha$ is a $2$-parts partition.
\end{proof}

\begin{lemma}\label{l26}
Let $p=5$ and $n\equiv 4\Md 5$ with $n\geq 9$. Then
\begin{align*}
Y^{(n)}&=D^{(n)}=S^{(n)},\\
Y^{(n-1,1)}&=D^{(n-1,1)}=S^{(n-1,1)},\\
Y^{(n-2,2)}&=D^{(n-2,2)}=S^{(n-2,2)},\\
Y^{(n-3,3)}&=\overbrace{D^{(n-2,2)}|D^{(n-3,3)}}^{S^{(n-3,3)}}|\overbrace{D^{(n-2,2)}}^{S^{(n-2,2)}},\\
Y^{(n-2,1^2)}&=D^{(n-2,1^2)}=S^{(n-2,1^2)},\\
Y^{(n-3,2,1)}&=D^{(n-3,2,1)}=S^{(n-3,2,1)},\\
Y^{(n-3,1^3)}&=D^{(n-3,1^3)}=S^{(n-3,1^3)}.
\end{align*}
Further
\begin{align*}
M^{(n)}\cong& Y^{(n)},\\
M^{(n-1,1)}\cong& Y^{(n-1,1)}\oplus Y^{(n)},\\
M^{(n-2,2)}\cong& Y^{(n-2,2)}\oplus Y^{(n-1,1)}\oplus Y^{(n)},\\
M^{(n-3,3)}\cong& Y^{(n-3,3)}\oplus Y^{(n-1,1)}\oplus Y^{(n)},\\
M^{(n-2,1^2)}\cong& Y^{(n-2,1^2)}\oplus Y^{(n-2,2)}\oplus (Y^{(n-1,1)})^2\oplus Y^{(n)},\\
M^{(n-3,2,1)}\cong& Y^{(n-3,2,1)}\oplus Y^{(n-2,1^2)}\oplus Y^{(n-3,3)}\oplus Y^{(n-2,2)}\oplus (Y^{(n-1,1)})^2\oplus Y^{(n)},\\
M^{(n-3,1^3)}\cong& Y^{(n-3,1^3)}\oplus (Y^{(n-3,2,1)})^2\oplus (Y^{(n-2,1^2)})^3\oplus Y^{(n-3,3)}\oplus (Y^{(n-2,2)})^2\\
&\oplus (Y^{(n-1,1)})^3\oplus Y^{(n)}.
\end{align*}
\end{lemma}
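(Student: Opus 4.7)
The plan is to mirror the proof of Lemma \ref{l25}, adapting the block analysis to the residue class $n \equiv 4 \Md 5$. First I would compute the $5$-residue contents of the seven partitions in the list; writing $n = 5m+4$, a direct tabulation shows that only $(n-2,2)$ and $(n-3,3)$ share a content (namely $(m+2,m+1,m,m,m+1)$), while the other five lie in pairwise distinct blocks. Combined with the dominance restriction on composition factors of Specht modules, the only Specht module in the list that can be reducible is $S^{(n-3,3)}$, and I would invoke Theorem 24.15 of \cite{JamesBook} and \cite{jw} to conclude $[S^{(n-3,3)}:D^{(n-2,2)}] = 1$. Since $D^\lambda$ is the head of $S^\lambda$, this yields the uniserial structure $S^{(n-3,3)} = D^{(n-2,2)} | D^{(n-3,3)}$, while every other $S^\lambda$ in the list is simple.

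Next, for $\lambda \neq (n-3,3)$, the identification $Y^\lambda = S^\lambda = D^\lambda$ is immediate from Lemma \ref{LYoung} since each $S^\lambda$ is simple and hence indecomposable. To determine $Y^{(n-3,3)}$, I would iterate Lemma \ref{l1} to obtain the Specht filtration $M^{(n-3,3)} \sim S^{(n-3,3)}|S^{(n-2,2)}|S^{(n-1,1)}|S^{(n)}$, use Henke's formulas in Section 3 of \cite{h} to see that $Y^{(n-2,2)}$ does not appear as a direct summand of $M^{(n-3,3)}$, and conclude that the projection of $M^{(n-3,3)}$ onto the block of $D^{(n-2,2)}$ and $D^{(n-3,3)}$ is an indecomposable summand with composition factors $D^{(n-2,2)}, D^{(n-3,3)}, D^{(n-2,2)}$; by Lemma \ref{LYoung} this summand equals $Y^{(n-3,3)}$. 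The uniserial structure then follows from self-duality combined with the embedding $S^{(n-3,3)} \hookrightarrow Y^{(n-3,3)}$: the socle of $Y^{(n-3,3)}$ contains $D^{(n-2,2)}$, and it cannot be strictly larger without forcing (via head equals socle) more than three composition factors.

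Finally, I would read off the decomposition of each $M^\alpha$ via Young's rule (14.1 of \cite{JamesBook}, or Section 3 of \cite{h} for two-part compositions). Starting from the Specht filtration $M^\alpha \sim \bigoplus_\lambda (S^\lambda)^{\oplus K_{\lambda\alpha}}$ and using that $Y^{(n-3,3)}$ itself carries a Specht filtration $S^{(n-3,3)}|S^{(n-2,2)}$ while every other Young module in the list equals its Specht/simple module, the multiplicity of $Y^{(n-3,3)}$ in $M^\alpha$ equals $K_{(n-3,3),\alpha}$, the multiplicity of $Y^{(n-2,2)}$ equals $K_{(n-2,2),\alpha} - K_{(n-3,3),\alpha}$, and the multiplicity of $Y^\lambda$ equals $K_{\lambda,\alpha}$ for every other $\lambda$; direct evaluation of the Kostka numbers for each $\alpha$ in the list then yields the stated decompositions. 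The hard part will be establishing the uniserial form of $Y^{(n-3,3)}$; the key input is the exclusion of a splitting of the $(n-2,2)$-block component of $M^{(n-3,3)}$, which follows either from Henke's multiplicity formulas or, analogously to the treatment of $Y^{(n-3,2,1)}$ in Lemma \ref{l25}, from an induction-functor analysis realising the block component as an $f_i$-image of an appropriate simple module.
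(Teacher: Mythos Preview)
Your proposal is correct and follows essentially the same route as the paper. The only notable difference is that you treat the structure of $Y^{(n-3,3)}$ as the ``hard part'' and propose either Henke's multiplicities or an $f_i$-functor argument to handle it, whereas the paper dismisses it as immediate from self-duality and indecomposability. Indeed no extra input is needed here: the block component of $M^{(n-3,3)}$ containing $D^{(n-3,3)}$ has exactly the composition factors $D^{(n-2,2)},D^{(n-3,3)},D^{(n-2,2)}$ (read off from the Specht filtration), and if $Y^{(n-2,2)}\cong D^{(n-2,2)}$ split off then $Y^{(n-3,3)}$ would equal $S^{(n-3,3)}$, which is not self-dual; hence $Y^{(n-3,3)}$ is the whole block component, and with only three composition factors (the middle one occurring once) self-duality plus indecomposability forces the uniserial shape. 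The $f_i$-analysis you allude to was genuinely needed in Lemma~\ref{l25} for $Y^{(n-3,2,1)}$ because that module has four composition factors, but here three suffice to make the structure automatic. Also, the citation of \cite{jw} is unnecessary: for the two-row partition $(n-3,3)$, Theorem~24.15 of \cite{JamesBook} alone gives the decomposition number.
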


\begin{proof}
Notice first that all the considered simple modules lie in pairwise distinct blocks, apart for $D^{(n-2,2)}$ and $D^{(n-3,3)}$ which lie in the same block. From Theorem 24.15 of \cite{JamesBook} we have that $[S^{(n-3,3)}:D^{(n-2,2)}]=1$. The structures of the Specht modules then follow. Further, since the Young modules are indecomposable and self-dual it is easy to see that the Young modules structure is also as given in the lemma. The multiplicities of the Young modules as direct summands of the modules $M^\alpha$ follow by comparing multiplicities of composition factors and from 14.1 of \cite{JamesBook} or by Section 3 of \cite{h} if $\alpha$ is a $2$-parts partition.
\end{proof}

\begin{lemma}\label{l24}
Let $p=5$ and $n\equiv 0\Md 5$ with $n\geq 10$. Then
\begin{align*}
Y^{(n)}&=D^{(n)}=S^{(n)},\\
Y^{(n-1,1)}&=\overbrace{D^{(n)}|D^{(n-1,1)}}^{S^{(n-1,1)}}|\overbrace{D^{(n)}}^{S^{(n)}},\\
Y^{(n-2,2)}&=D^{(n-2,2)}=S^{(n-2,2)},\\
Y^{(n-3,3)}&=D^{(n-3,3)}=S^{(n-3,3)},\\
Y^{(n-4,4)}&=\overbrace{D^{(n-2,2)}|D^{(n-4,4)}}^{S^{(n-2,2)}}|\overbrace{D^{(n-2,2)}}^{S^{(n-2,2)}},\\
Y^{(n-2,1^2)}&\sim\overbrace{D^{(n-1,1)}|D^{(n-2,1^2)}}^{S^{(n-2,1^2)}}|\overbrace{D^{(n)}|D^{(n-1,1)}}^{S^{(n-1,1)}},\\
Y^{(n-3,2,1)}&=D^{(n-3,2,1)}=S^{(n-3,2,1)},\\
Y^{(n-4,3,1)}&=\overbrace{D^{(n-3,2,1)}|D^{(n-4,3,1)}}^{S^{(n-4,3,1)}}|\overbrace{D^{(n-3,2,1)}}^{S^{(n-3,2,1)}},\\
Y^{(n-4,2^2)}&=D^{(n-4,2^2)}=S^{(n-4,2^2)}.
\end{align*}
Further
\begin{align*}
M^{(n)}\cong& Y^{(n)},\\
M^{(n-1,1)}\cong& Y^{(n-1,1)},\\
M^{(n-2,2)}\cong& Y^{(n-2,2)}\oplus Y^{(n-1,1)},\\
M^{(n-3,3)}\cong& Y^{(n-3,3)}\oplus Y^{(n-2,2)}\oplus Y^{(n-1,1)},\\
M^{(n-4,4)}\cong& Y^{(n-4,4)}\oplus Y^{(n-3,3)}\oplus Y^{(n-1,1)},\\
M^{(n-2,1^2)}\cong& Y^{(n-2,1^2)}\oplus Y^{(n-2,2)}\oplus Y^{(n-1,1)},\\
M^{(n-3,2,1)}\cong& Y^{(n-3,2,1)}\oplus Y^{(n-2,1^2)}\oplus Y^{(n-3,3)}\oplus (Y^{(n-2,2)})^2\oplus Y^{(n-1,1)},\\
M^{(n-4,3,1)}\cong& Y^{(n-4,3,1)}\oplus Y^{(n-2,1^2)}\oplus Y^{(n-4,4)}\oplus (Y^{(n-3,3)})^2\oplus Y^{(n-2,2)}\\
&\oplus Y^{(n-1,1)},\\
M^{(n-4,2^2)}\cong& Y^{(n-4,2^2)}\oplus Y^{(n-4,3,1)}\oplus Y^{(n-3,2,1)}\oplus Y^{(n-2,1^2)}\oplus Y^{(n-4,4)}\\
&\oplus (Y^{(n-3,3)})^2\oplus (Y^{(n-2,2)})^2\oplus Y^{(n-1,1)}.
\end{align*}
\end{lemma}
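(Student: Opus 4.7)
The proof follows the template of Lemmas \ref{l25} and \ref{l26}. The first step is to sort the nine simple modules appearing in the statement into $5$-blocks by computing residue contents for $n=5k$: this yields three non-singleton groups among the listed modules, namely $\{D^{(n)}, D^{(n-1,1)}, D^{(n-2,1^2)}\}$, $\{D^{(n-2,2)}, D^{(n-4,4)}\}$, and $\{D^{(n-3,2,1)}, D^{(n-4,3,1)}\}$, while $D^{(n-3,3)}$ and $D^{(n-4,2^2)}$ each lie alone in their blocks among the modules under consideration. Consequently, only Specht modules indexed by partitions in these three non-trivial groupings can have composition length greater than one.

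Next I would read off decomposition numbers from Theorem~24.15 of \cite{JamesBook} together with \cite{jw}: each non-simple $S^\lambda$ in the statement turns out to have length exactly two, with head $D^\lambda$ and a single additional composition factor, and the uniserial Specht structure is then forced. Self-duality, indecomposability, and the inclusion $S^\lambda\subseteq Y^\lambda$ pin down the structures of $Y^{(n-1,1)}$, $Y^{(n-4,4)}$, and $Y^{(n-4,3,1)}$ as uniserial modules of length three, consisting of $S^\lambda$ at the bottom with the dual of its Specht quotient on top, since no further composition factors are available in $M^\lambda$.

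The delicate Young module is $Y^{(n-2,1^2)}$, which carries four composition factors and should be identified by the same branching trick used for $Y^{(n-3,2,1)}$ in Lemma \ref{l25}. In $\s_{n-1}$, James's description of hook Specht modules gives $S^{(n-2,1)}\cong D^{(n-2,1)}$ (since $5\nmid n-1$), and a block decomposition argument then shows that $D^{(n-2,1)}$ is a direct summand of $M^{(n-2,1)}$; inducing up makes $D^{(n-2,1)}\ua^{\s_n}$ a direct summand of $M^{(n-2,1^2)}$. Applying Corollary~17.14 of \cite{JamesBook} and Lemma \ref{l45} to split into $i$-blocks, the summand $f_3 D^{(n-2,1)}$ inherits the Specht filtration $S^{(n-2,1^2)}|S^{(n-1,1)}$; it is indecomposable by Lemma \ref{l40} and contains $S^{(n-2,1^2)}$, so Lemma \ref{LYoung} identifies it with $Y^{(n-2,1^2)}$ of the claimed structure. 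The identity $M^{(n-1,1)}\cong Y^{(n-1,1)}$ follows separately from $\dim\Hom_{\s_n}(\1,M^{(n-1,1)})=1$ (Frobenius reciprocity) combined with self-duality of $M^{(n-1,1)}$, which together rule out any non-trivial direct sum decomposition.

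Finally, the Krull--Schmidt decompositions of the $M^\alpha$ are obtained, as in Lemmas \ref{l25} and \ref{l26}, by comparing composition factor multiplicities of $M^\alpha$ against those of the candidate summands $Y^\mu$: for two-part $\alpha$ this is a direct application of Section~3 of \cite{h}, while in general it follows from 14.1 of \cite{JamesBook}. I expect the branching identification of $Y^{(n-2,1^2)}$ to be the main obstacle, since every other Young module in the list is either simple or has its structure dictated entirely by its Specht submodule together with self-duality; the remaining computations are careful but routine bookkeeping inside small blocks.
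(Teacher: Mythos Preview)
Your proposal is correct and follows essentially the same approach as the paper's proof: block sorting, decomposition numbers from \cite{JamesBook} and \cite{jw}, the self-duality/indecomposability argument for most Young modules, and the branching identification of $Y^{(n-2,1^2)}$ via $f_3 D^{(n-2,1)}$. The only cosmetic difference is that the paper obtains $S^{(n-2,1)}\cong D^{(n-2,1)}$ by invoking Lemma~\ref{l26} for $\s_{n-1}$ rather than citing the hook Specht description directly, and it also cites Theorem~24.1 of \cite{JamesBook} alongside 24.15 for the Specht structures.
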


\begin{proof}
We have the following subsets of pairwise distinct blocks: $\{D^{(n)},$ $D^{(n-1,1)},D^{(n-1^2)}\}$, $\{D^{(n-2,2)},D^{(n-4,4)}\}$, $\{D^{(n-3,3)}\}$, $\{D^{(n-3,2,1)},D^{(n-4,3,1)}\}$ and $\{D^{(n-4,2^2)}\}$. The structure of the Specht modules then follows by Theorems 24.1 and 24.15 of \cite{JamesBook} and by \cite{jw}. Further, since the Young modules are indecomposable and self-dual it is easy to see that the Young modules structure is also as given in the lemma, apart possibly for the structure of $Y^{(n-2,1^2)}$. For $Y^{(n-2,1^2)}$ note that $D^{(n-2,1)}\cong S^{(n-2,1)}$ by Lemma \ref{l26}. Since $n\equiv 0\Md 5$,
\[\begin{tikzpicture}
\draw (0,0) node {0};
\draw (0,-0.4) node {4};
\draw (0,-0.8) node {3};
\draw (0.4,0) node {1};
\draw (0.4,-0.4) node {0};
\draw (0.8,0) node {2};
\draw (1.6,0) node {2};
\draw (2,0) node {3};

\draw (-1.5,-0.2) node {$(n-2,1)=$};

\draw (2.3,-0.4) node {.};

\draw (-0.2,0.2) -- (1.8,0.2) -- (1.8,-0.2) -- (0.2,-0.2) -- (0.2,-0.6) -- (-0.2,-0.6) -- (-0.2,0.2);
\end{tikzpicture}\]
So, from Lemmas \ref{l45} and \ref{l40}, from Corollary 17.14 of \cite{JamesBook} and from block decomposition we have that
\begin{align*}
D^{(n-2,1)}\ua^{\s_n}&\cong S^{(n-2,1)}\ua^{\s_n}\\
&\sim S^{(n-2,1^2)}|S^{(n-2,2)}|S^{(n-1,1)}\\
&\sim (\overbrace{S^{(n-2,1^2)}|S^{(n-1,1)}}^{f_3D^{(n-2,1)}})\oplus \overbrace{S^{(n-2,2)}}^{f_0D^{(n-2,1)}}.
\end{align*}
Since $f_3D^{(n-2,1)}$ is indecomposible by Lemma \ref{l40}, it follows that $f_3D^{(n-2,1)}\cong Y^{(n-2,1^2)}$ by Lemma \ref{LYoung}.

The multiplicities of the Young modules as direct summands of the modules $M^\alpha$ follow by comparing multiplicities of composition factors and from 14.1 of \cite{JamesBook} or by Section 3 of \cite{h} if $\alpha$ is a $2$-parts partition.
\end{proof}

\section{Restrictions to $\s_{n-2,2}$}

We will now consider certain submodules of $D^\lambda\da_{\s_{n-2,2}}$. The next lemma generalizes Lemma 1.2 of \cite{bk2} and will used in studying such restrictions.

\begin{lemma}\label{l16}
Let $M_1,\ldots,M_h,X,Y$ be $FG$ modules. Assume that that $M_1\oplus\ldots\oplus M_h\subseteq X\oplus Y$ and that $M_i$ has simple socle for each $1\leq i\leq h$. Then there exist $I_X,I_Y$ disjoint with $I_X\cup I_Y=\{1,\ldots, h\}$ such that, up to isomorphism, $\oplus_{i\in I_X}M_i\subseteq X$ and $\oplus_{i\in I_Y}M_i\subseteq Y$.
\end{lemma}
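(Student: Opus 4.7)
The plan is to proceed by induction on $h$, reducing at each stage to a question at the level of the socles. For the base case $h=1$, the simple socle $\soc(M_1)$ lies in $X\oplus Y$ and projects injectively to at least one of the summands (being simple), and by essentiality of the socle in $M_1$ the same projection gives an embedding $M_1\hookrightarrow X$ or $M_1\hookrightarrow Y$.

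For the inductive step, the strategy is to first solve the problem at the level of socles. Writing $S_i=\soc(M_i)$, the module $\bigoplus_i S_i$ is a semisimple submodule of $X\oplus Y$. It suffices to produce a partition $I_X\sqcup I_Y=\{1,\ldots,h\}$ with $(\bigoplus_{i\in I_X}S_i)\cap Y=0$ and $(\bigoplus_{i\in I_Y}S_i)\cap X=0$ inside $X\oplus Y$. Given such a partition, if $(\bigoplus_{i\in I_X}M_i)\cap Y$ were nonzero it would, by essentiality of the socle, contain a simple submodule sitting in $Y$; but any simple submodule of $\bigoplus_{i\in I_X}M_i$ must lie in its socle $\bigoplus_{i\in I_X}S_i$, contradicting the choice of partition. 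Hence the projection to $X$ is injective on $\bigoplus_{i\in I_X}M_i$, giving the required embedding, and symmetrically for $I_Y$.

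To produce the socle-level partition, I would split $\bigoplus_i S_i$ into isotypic components and, via Schur's lemma (valid as $F$ is algebraically closed), reduce to the following linear-algebra statement: given linearly independent vectors $v_1,\ldots,v_n$ in $F^a\oplus F^b$, one can partition $\{1,\ldots,n\}$ into $I\sqcup I'$ so that the projections to $F^a$ of $\{v_i:i\in I\}$ and to $F^b$ of $\{v_i:i\in I'\}$ are each linearly independent. This is a matroid-union statement, and the required rank inequality --- that for every subset $J$ of indices, the projections of $V_J:=\mathrm{span}\{v_i:i\in J\}$ satisfy $\dim(\mathrm{pr}_a V_J)+\dim(\mathrm{pr}_b V_J)\geq\dim V_J$ --- is immediate from the inclusion $V_J\subseteq \mathrm{pr}_a V_J\oplus \mathrm{pr}_b V_J$.

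The main obstacle is exactly this socle-level partitioning: a naive greedy approach (assigning each $M_i$ according to which projection is nonzero on $S_i$) can fail when several $S_i$ are isomorphic and embed diagonally in $X\oplus Y$, so that two copies both project injectively to, say, $X$ while their direct sum does not. It is this phenomenon that forces the matroid/exchange argument and makes the result a genuine strengthening of Lemma 1.2 of \cite{bk2}.
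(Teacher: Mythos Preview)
Your approach is essentially the same as the paper's: both reduce to the socle level, seek a partition $I_X\sqcup I_Y$ for which $\pi_X$ (resp.\ $\pi_Y$) is injective on $\bigoplus_{i\in I_X}\soc(M_i)$ (resp.\ $\bigoplus_{i\in I_Y}\soc(M_i)$), and then lift to the $M_i$ via essentiality of the socle. The paper simply asserts that such a partition exists, whereas your reduction to isotypic components and the matroid-union rank inequality supplies a correct justification of this step---and your remark that a naive greedy assignment can fail is on point.
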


\begin{proof}
Let $\pi_X$ and $\pi_Y$ be the projections to $X$ and $Y$ respectively. Since $\pi_X+\pi_Y=\id$ and the modules $M_i$ have simple socles, we can find disjoint sets $I_X,I_Y$ with $I_X\cup I_Y=\{1,\ldots,h\}$ such that $\pi_X$ and $\pi_Y$ are injective on $\oplus_{i\in I_X}\soc(M_i)$ and $\oplus_{i\in I_Y}\soc(M_i)$ respectively. It follows that $\pi_1$ and $\pi_2$ are injective on $\oplus_{i\in I_X}M_i$ and $\oplus_{i\in I_Y}M_i$ respectively and so the lemma holds.
\end{proof}

\begin{lemma}\label{l10}
Let $p\geq 3$ and $\lambda\vdash n$ be $p$-regular. For $0\leq i<p$ we have that $e_i^{(2)}(D^\lambda)\otimes  (D^{(2)}\oplus D^{(1^2)})$ is a direct summand of $D^\lambda\da_{\s_{n-2,2}}$.
\end{lemma}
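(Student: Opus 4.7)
The plan is to combine the index-$2$ embedding $\s_{n-2,1,1}\subset\s_{n-2,2}$ with an $\s_{n-2}$-block projection argument. Since $\s_{n-2,1,1}=\s_{n-2}$ is the kernel of the sign character $\s_{n-2,2}\to\{\pm 1\}$, for any $F\s_{n-2,2}$-module $X$ one has $X\da_{\s_{n-2,1,1}}\ua^{\s_{n-2,2}}\cong X\oplus X\otimes\sgn_{\s_2}$. Applying this to $X=D^\lambda\da_{\s_{n-2,2}}$ yields
\[
D^\lambda\da_{\s_{n-2}}\boxtimes(D^{(2)}\oplus D^{(1^2)})\;\cong\;D^\lambda\da_{\s_{n-2,2}}\oplus D^\lambda\da_{\s_{n-2,2}}\otimes\sgn_{\s_2}.
\]
By Lemma \ref{l45} applied twice together with Lemma \ref{l39}, $e_i^2D^\lambda\cong(e_i^{(2)}D^\lambda)^{\oplus 2}$ is the $\s_{n-2}$-block component of $D^\lambda\da_{\s_{n-2}}$ corresponding to the block of $D^{\tilde{e}_i^2(\lambda)}$, and so $(e_i^{(2)}D^\lambda)^{\oplus 2}\boxtimes(D^{(2)}\oplus D^{(1^2)})$ is a direct summand of both sides.

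Because block idempotents of $\s_{n-2}$ are central in $\s_{n-2,2}$, projecting $D^\lambda\da_{\s_{n-2,2}}$ onto the $\s_{n-2}$-block of $D^{\tilde{e}_i^2(\lambda)}$ yields a direct summand $W$ of $D^\lambda\da_{\s_{n-2,2}}$ with $W\da_{\s_{n-2}}\cong(e_i^{(2)}D^\lambda)^{\oplus 2}$. Splitting $W$ into $\s_2$-isotypic components and invoking Krull--Schmidt,
\[
W\;\cong\;(e_i^{(2)}D^\lambda)^{\oplus a_+}\boxtimes D^{(2)}\;\oplus\;(e_i^{(2)}D^\lambda)^{\oplus a_-}\boxtimes D^{(1^2)}
\]
with $a_++a_-=2$, and it remains to show $a_+=a_-=1$.

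The main obstacle is ruling out the degenerate distributions $(a_+,a_-)\in\{(2,0),(0,2)\}$. The strategy is to apply Lemma \ref{l16} to the four indecomposable summands $(e_i^{(2)}D^\lambda\boxtimes D^{(2)})^{\oplus 2}\oplus(e_i^{(2)}D^\lambda\boxtimes D^{(1^2)})^{\oplus 2}$ embedded in the right-hand side of the first displayed isomorphism, each of which has simple socle by Lemma \ref{l39}(i), and then to exploit the $\otimes\sgn_{\s_2}$-symmetry interchanging the two copies (and interchanging $D^{(2)}$ with $D^{(1^2)}$). To pin down that both $e_i^{(2)}D^\lambda\boxtimes D^{(2)}$ and $e_i^{(2)}D^\lambda\boxtimes D^{(1^2)}$ actually embed into $D^\lambda\da_{\s_{n-2,2}}$ itself (and not both into the twisted copy), I would compute $\dim\Hom_{\s_{n-2,2)}}(e_i^{(2)}D^\lambda\boxtimes D^{(2)},D^\lambda\da_{\s_{n-2,2}})$ via Frobenius reciprocity and the adjunction of Lemma \ref{l48}, using that $f_i^{(2)}e_i^{(2)}D^\lambda$ has simple head $D^\lambda$ (by Lemmas \ref{l40} and \ref{l47}). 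Any such embedding factors through the block summand $W$, forcing $a_\pm\geq 1$ and hence $a_+=a_-=1$, which gives the desired direct-summand decomposition.
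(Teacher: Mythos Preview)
Your approach is essentially the same as the paper's: isolate the $\s_{n-2}$-block summand $W$ of $D^\lambda\da_{\s_{n-2,2}}$ whose restriction to $\s_{n-2}$ is $e_i^2D^\lambda\cong(e_i^{(2)}D^\lambda)^{\oplus 2}$, split it into $\s_2$-isotypic pieces, and use a Frobenius reciprocity computation (via the definition of $f_i^{(2)}$) to pin those pieces down. The paper computes $\dim\Hom_{\s_{n-2,2}}(D^{\tilde{e}_i^2(\lambda)}\boxtimes D^{(2)},D^\lambda\da_{\s_{n-2,2}})=1$ to identify the socle of $W$ and then applies Lemma~\ref{l16}; you instead invoke Krull--Schmidt (using indecomposability of $e_i^{(2)}D^\lambda$, Lemma~\ref{l39}\ref{l39a}) to get $W_\pm\cong(e_i^{(2)}D^\lambda)^{\oplus a_\pm}$ with $a_++a_-=2$, and then use a Hom computation to rule out $(a_+,a_-)\in\{(2,0),(0,2)\}$. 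These are minor rearrangements of the same ingredients; the detour through the index-$2$ induction and the appeal to Lemma~\ref{l16} in your write-up are not actually used.

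One inaccuracy to fix: $f_i^{(2)}e_i^{(2)}D^\lambda$ does \emph{not} have simple head when $\epsilon_i(\lambda)\geq 3$ (and Lemmas~\ref{l40} and~\ref{l47} do not assert this, since $e_i^{(2)}D^\lambda$ is not simple). By adjunction one has $\dim\Hom_{\s_n}(f_i^{(2)}e_i^{(2)}D^\lambda,D^\lambda)=\dim\End_{\s_{n-2}}(e_i^{(2)}D^\lambda)=\binom{\epsilon_i(\lambda)}{2}$. This does not damage your argument: you only need this Hom to be nonzero, which gives $a_+\geq 1$. For $a_-\geq 1$ you cannot simply cite the $\sgn_{\s_2}$-symmetry (it swaps $D^\lambda\da_{\s_{n-2,2}}$ with its twist, not with itself), but Frobenius for $\s_{n-2}\subset\s_{n-2,2}$ gives
\[
\dim\Hom(N\boxtimes D^{(2)},D^\lambda\da_{\s_{n-2,2}})+\dim\Hom(N\boxtimes D^{(1^2)},D^\lambda\da_{\s_{n-2,2}})=\dim\Hom_{\s_{n-2}}(N,e_i^2D^\lambda)=2\tbinom{\epsilon_i(\lambda)}{2}
\]
for $N=e_i^{(2)}D^\lambda$, so the $D^{(1^2)}$-Hom is also $\binom{\epsilon_i(\lambda)}{2}\geq 1$, and hence $a_-\geq 1$.
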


\begin{proof}
From Lemma \ref{l39} we can assume that $\epsilon_i(\lambda)\geq 2$ (else $e_i^{(2)}D^\lambda=0$).

By definition $e_i^2D^\lambda$ is a block component of $D^\lambda\da_{\s_{n-2}}$. Comparing block decomposition of $D^\lambda\da_{\s_{n-2}}$ and $D^\lambda\da_{\s_{n-2,2}}$, there exist a module $M$ which is a direct sum of $D^\lambda\da_{\s_{n-2,2}}$ with $M\da_{\s_{n-2}}\cong e_i^2D^\lambda$. Notice that $M$ is the sum of the block components of $D^\lambda\da_{\s_{n-2,2}}$ lying in the blocks of $D^{\tilde{e}_i^2(\lambda)}\otimes D^{(2)}$ and of $D^{\tilde{e}_i^2(\lambda)}\otimes D^{(1^2)}$. From Lemmas \ref{l39} and Lemma 1.1 of \cite{bk2} we have that
\[\soc(M)\da_{\s_{n-2}}\cong\soc(e_i^2D^\lambda)\cong D^{\tilde{e}_i^2(\lambda)}\oplus D^{\tilde{e}_i^2(\lambda)}.\]
We will first show that $\soc(M)\cong D^{\tilde{e}_i^2(\lambda)}\otimes(D^{(2)}\oplus D^{(1^2)})$. By definition of $M$, in order to do this it is enough to prove that
\[[\soc(D^\lambda\da_{\s_{n-2,2}}):D^{\tilde{e}_i^2(\lambda)}\otimes D^{(2)}]=1.\]
From Lemma \ref{l40}, by definition of $f^{(2)}_i$ and considering block decomposition we have that
\begin{align*}
\dim\Hom_{\s_{n-2,2}}(D^{\tilde{e}_i^2(\lambda)}\!\otimes\! D^{(2)},D^\lambda\da_{\s_{n-2,2}})
&\!=\!\dim\Hom_{\s_n}((D^{\tilde{e}_i^2(\lambda)}\!\otimes\! D^{(2)})\ua^{\s_n},D^\lambda)\\
&\!=\!\dim\Hom_{\s_n}(f_i^{(2)}(D^{\tilde{e}_i^2(\lambda)}),D^\lambda)\\
&\!=\!1.
\end{align*}
So $\soc(M)\cong D^{\tilde{e}_i^2(\lambda)}\otimes(D^{(2)}\oplus D^{(1^2)})$. Since $D^{(2)}$ and $D^{(1^2)}$ lie in distinct blocks of $\s_2$ and since $\s_2$ is semisimple (as $p\geq 3$), we have that $M\cong(M_1\otimes D^{(2)})\oplus (M_2\otimes D^{(1^2)})$ for some modules $M_1,M_2$  with socle $D^{\tilde{e}_i^2(\lambda)}$. In particular
\[M_1\oplus M_2\cong M\da_{\s_{n-2}}\cong e_i^{(2)}D^\lambda\oplus e_i^{(2)}D^\lambda.\]
From Lemma \ref{l16} it follows that $M_1$ and $M_2$ are isomorphically contained in $e_i^{(2)}D^\lambda$ and so, comparing dimensions, that $M_1,M_2\cong e_i^{(2)}D^\lambda$.
\end{proof}

\begin{lemma}\label{l11}
Let $p\geq 3$ and $\lambda\vdash n$ be $p$-regular. For each $j$ with $\epsilon_j(\lambda)>0$ and for each $i\not=j$ there exists $b_{i,j}\in\{D^{(2)},D^{(1^2)}\}$ such that
\[\bigoplus_{{j:\epsilon_j(\lambda)>0}\atop{i\not=j}}e_i(D^{\tilde{e}_j(\lambda)})\otimes b_{i,j}\]
is both a submodule and a quotient of $D^\lambda\da_{\s_{n-2,2}}$.
\end{lemma}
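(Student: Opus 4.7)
The plan is to adapt the argument of Lemma \ref{l10} to mixed residues. Fix an unordered pair $\{i,j\}$ with $i\neq j$, and let $M$ be the block component of $D^\lambda\da_{\s_{n-2,2}}$ whose $\s_{n-2}$-content is obtained from $\lambda$ by removing one $i$-node and one $j$-node. Semisimplicity of $\s_2$ (as $p\geq 3$) and the fact that $D^{(2)}$, $D^{(1^2)}$ lie in distinct blocks give
\[M\cong N^{(2)}\otimes D^{(2)}\oplus N^{(1^2)}\otimes D^{(1^2)}\]
for some $\s_{n-2}$-modules $N^{(2)},N^{(1^2)}$ with $N^{(2)}\oplus N^{(1^2)}\cong M\da_{\s_{n-2}}$. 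Two applications of Lemma \ref{l45} show that $M\da_{\s_{n-2}}$ is exactly the block-$\{i,j\}$ part of $D^\lambda\da_{\s_{n-2}}$, which equals $e_ie_jD^\lambda\oplus e_je_iD^\lambda$, the two distinct summands arising by passing through the intermediate $\s_{n-1}$-blocks $e_jD^\lambda$ and $e_iD^\lambda$ respectively.

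For each $(i,j)$ with $\epsilon_j(\lambda)>0$, Lemma \ref{l39} provides the socle inclusion $D^{\tilde{e}_j(\lambda)}\hookrightarrow e_jD^\lambda$, and exactness of $e_i$ gives $e_i(D^{\tilde{e}_j(\lambda)})\hookrightarrow e_ie_jD^\lambda$; the symmetric statement for $(j,i)$ holds when $\epsilon_i(\lambda)>0$. Each nonzero module $e_i(D^{\tilde{e}_j(\lambda)})$ has simple socle by Lemma \ref{l39}, so Lemma \ref{l16} applied to the joint embedding into $N^{(2)}\oplus N^{(1^2)}$ selects a label $b_{i,j}\in\{D^{(2)},D^{(1^2)}\}$ for each ordered pair such that $e_i(D^{\tilde{e}_j(\lambda)})\otimes b_{i,j}$ embeds in $M$. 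Summing over distinct unordered pairs $\{i,j\}$, which correspond to pairwise distinct blocks of $\s_{n-2,2}$, gives the desired submodule. For the quotient statement, observe that each $e_i(D^{\tilde{e}_j(\lambda)})$ is self-dual by Lemma \ref{l57}, each $b_{i,j}$ is self-dual as a one-dimensional module, and $D^\lambda\da_{\s_{n-2,2}}$ is self-dual because $D^\lambda$ is; dualising the submodule inclusion then yields the required quotient.

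The delicate point is the identification $M\da_{\s_{n-2}}=e_ie_jD^\lambda\oplus e_je_iD^\lambda$: the two modules sit in the same $\s_{n-2}$-block despite arising from distinct intermediate $\s_{n-1}$-blocks, so one must carefully unpack the two-step application of Lemma \ref{l45} and check that nothing else in $D^\lambda\da_{\s_{n-2}}$ contributes to this block. Once this is in place, Lemma \ref{l16} does the essential bookkeeping of assigning each summand to either the $D^{(2)}$- or the $D^{(1^2)}$-isotypic part, and self-duality takes care of the quotient.
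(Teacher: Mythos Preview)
Your proof is correct and follows essentially the same strategy as the paper: reduce to the submodule claim by self-duality, split $D^\lambda\da_{\s_{n-2,2}}$ into its $D^{(2)}$- and $D^{(1^2)}$-parts using semisimplicity of $F\s_2$, embed each $e_i(D^{\tilde{e}_j(\lambda)})$ via Lemmas~\ref{l45} and~\ref{l39}, and then invoke Lemma~\ref{l16}. The only difference is cosmetic: the paper works globally with the decomposition $D^\lambda\da_{\s_{n-2,2}}\cong(M_1\otimes D^{(2)})\oplus(M_2\otimes D^{(1^2)})$ and a single application of Lemma~\ref{l16}, whereas you organise the argument block by block over unordered pairs $\{i,j\}$; this extra bookkeeping is harmless but unnecessary.
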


\begin{proof}
Since $D^\lambda\da_{\s_{n-2,2}}$, $e_i(D^{\tilde{e}_j(\lambda)})$, $D^{(2)}$ and $D^{(1^2)}$ are self-dual it is enough to show that there exist $b_{i,j}$ such that
\[\bigoplus_{{j:\epsilon_j(\lambda)>0}\atop{i\not=j}}e_i(D^{\tilde{e}_j(\lambda)})\otimes b_{i,j}\subseteq D^\lambda\da_{\s_{n-2,2}}.\]
Since $p\geq 3$, there exist $M_1,M_2$ with $D^\lambda\da_{\s_{n-2,2}}\cong (M_1\otimes D^{(2)})\oplus (M_2\otimes D^{(1^2)})$. From Lemmas \ref{l45} and \ref{l39}
\[\bigoplus_{{j:\epsilon_j(\lambda)>0}\atop{i\not=j}}e_i(D^{\tilde{e}_j(\lambda)})\subseteq \bigoplus_{i\not=j}e_ie_jD^\lambda\subseteq D^\lambda\da_{\s_{n-2}}\cong M_1\oplus M_2.\]
and the modules $e_i(D^{\tilde{e}_j(\lambda)})$ have simple socle, if they are non-zero. The lemma then follows by Lemma \ref{l16}.
\end{proof}

\section{Dimensions of homomorphism rings}

In this section we study the size of certain homomorphism rings, which will allow us later in Sections \ref{s1} and \ref{s2} to prove that in almost all cases the tensor product of two irreducible representations of $A_n$ is not irreducible.

\begin{lemma}\label{l2}
For any $F\s_n$-module $V$ and any $\alpha\vdash n$ we have that
\[\dim\Hom_{\s_n}(M^\alpha,\End_F(V))=\dim\End_{\s_\alpha}(V\da_{\s_\alpha}).\]
\end{lemma}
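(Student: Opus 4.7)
The plan is to read the identity as a single application of Frobenius reciprocity, followed by the standard identification of invariants in an endomorphism space with equivariant endomorphisms.

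First I would rewrite $M^\alpha$ as the induced module $\1\ua_{\s_\alpha}^{\s_n}$, using the definition given earlier in the paper. Then Frobenius reciprocity (which holds over any field for induction from a subgroup) gives
\[
\Hom_{\s_n}\bigl(\1\ua_{\s_\alpha}^{\s_n},\End_F(V)\bigr)\;\cong\;\Hom_{\s_\alpha}\bigl(\1,\End_F(V)\da_{\s_\alpha}\bigr).
\]
So the target on the left has been replaced by invariants on the right.

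Next I would identify $\End_F(V)\da_{\s_\alpha}$ with $\End_F(V\da_{\s_\alpha})$ as $F\s_\alpha$-modules (they coincide as $F$-vector spaces, and the $\s_\alpha$-action in both cases is the conjugation action $g\cdot f = g\circ f\circ g^{-1}$). Once this identification is made, taking $\s_\alpha$-invariants amounts to selecting precisely those $F$-linear endomorphisms of $V$ that commute with the action of $\s_\alpha$, i.e.
\[
\Hom_{\s_\alpha}\bigl(\1,\End_F(V\da_{\s_\alpha})\bigr)\;=\;\End_F(V\da_{\s_\alpha})^{\s_\alpha}\;=\;\End_{\s_\alpha}(V\da_{\s_\alpha}).
\]
Stringing the three isomorphisms together yields the claimed equality of dimensions.

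There is no real obstacle here; every step is formal. The only thing to check carefully is that the isomorphism $\End_F(V)\da_{\s_\alpha}\cong\End_F(V\da_{\s_\alpha})$ is $\s_\alpha$-equivariant with the correct (conjugation) action, but this is immediate from how $\s_n$ acts on $\Hom_F(V,V)=V^*\otimes V$.
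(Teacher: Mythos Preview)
Your argument is correct and is exactly the approach the paper takes: the paper's proof consists of the single sentence ``This follows by Frobenius reciprocity, since $M^\alpha=\1\ua_{\s_\alpha}^{\s_n}$,'' and you have simply unpacked that sentence in full detail.
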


\begin{proof}
This follows by Frobenius reciprocity, since $M^\alpha=1\ua_{\s_\alpha}^{\s_n}$.
\end{proof}

\begin{lemma}\label{l6}
Let $p\geq 3$. If $\lambda=\lambda^\Mull$ a $p$-regular partition and $V$ is an $F\s_n$-module, then
\begin{align*}
&\dim\Hom_{A_n}(V\da_{A_n},\Hom_F(E^\lambda_+\oplus E^\lambda_-,E^\lambda_\pm))\\
&=\dim\Hom_{A_n}(\Hom_F(E^\lambda_\pm,E^\lambda_+\oplus E^\lambda_-),V^*\da_{A_n})\\
&=\dim\Hom_{\s_n}(V,\End_F(D^\lambda)).
\end{align*}
\end{lemma}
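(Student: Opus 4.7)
The plan is to identify $\End_F(D^\lambda)$ as an $F\s_n$-module induced from an $FA_n$-module, and then to invoke Frobenius reciprocity together with standard duality. The two equalities will come from essentially independent arguments.

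First I would decompose the restriction. Since $\lambda=\lambda^\Mull$ we have $D^\lambda\da_{A_n}\cong E^\lambda_+\oplus E^\lambda_-$, so
\[
\End_F(D^\lambda)\da_{A_n}\cong\Hom_F(E^\lambda_+\oplus E^\lambda_-,E^\lambda_+)\oplus\Hom_F(E^\lambda_+\oplus E^\lambda_-,E^\lambda_-).
\]
For any $\sigma\in\s_n\setminus A_n$, conjugation by $\sigma$ swaps $E^\lambda_+$ and $E^\lambda_-$ (this is the standard Clifford-theoretic fact for self-Mullineux partitions, which is stated earlier in the excerpt), and hence swaps the two summands above. Combined with the index-$2$ induction/restriction formula, this gives an isomorphism of $F\s_n$-modules
\[
\End_F(D^\lambda)\cong\Hom_F(E^\lambda_+\oplus E^\lambda_-,E^\lambda_\pm)\ua_{A_n}^{\s_n},
\]
where either choice of sign yields the same induced module.

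The first equality in the lemma then follows at once by Frobenius reciprocity:
\[
\Hom_{\s_n}(V,\End_F(D^\lambda))\cong\Hom_{A_n}(V\da_{A_n},\Hom_F(E^\lambda_+\oplus E^\lambda_-,E^\lambda_\pm)).
\]

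For the equality with the middle term, I would use the general natural isomorphism $\Hom_{A_n}(X,Y)\cong\Hom_{A_n}(Y^*,X^*)$ for finite-dimensional $FA_n$-modules together with the canonical isomorphism $\Hom_F(A,B)^*\cong\Hom_F(B,A)$ (as $A_n$-modules). Applying these with $X=\Hom_F(E^\lambda_\pm,E^\lambda_+\oplus E^\lambda_-)$ and $Y=V^*\da_{A_n}$ rewrites the middle expression as $\Hom_{A_n}(V\da_{A_n},\Hom_F(E^\lambda_+\oplus E^\lambda_-,E^\lambda_\pm))$, matching the right-hand expression already obtained.

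No step here is a serious obstacle; the only point that requires real verification is the identification of $\End_F(D^\lambda)$ as an induced module, which hinges on $\sigma$ interchanging $E^\lambda_+$ with $E^\lambda_-$ (using $p\ne 2$ to ensure these are non-isomorphic, and hence swapped rather than preserved). Once that is in place, both equalities are formal consequences of Frobenius reciprocity and $F$-linear duality.
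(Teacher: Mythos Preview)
Your proposal is correct and follows essentially the same route as the paper: both arguments identify $\End_F(D^\lambda)$ with the induction $\Hom_F(E^\lambda_+\oplus E^\lambda_-,E^\lambda_\pm)\ua_{A_n}^{\s_n}$ and then apply Frobenius reciprocity (the paper phrases this via the tensor identity, writing the inside as $(E^\lambda_+\oplus E^\lambda_-)^*\otimes E^\lambda_\pm$ and using $(E^\lambda_\pm)\ua^{\s_n}\cong D^\lambda$, which is exactly your Clifford-theoretic swapping observation). Your treatment of the middle equality by $F$-linear duality is what the paper compresses into ``the other equality holds similarly''.
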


\begin{proof}
Using Frobenious reciprocity we have
\begin{align*}
\Hom_{A_n}(V\da_{A_n},\Hom_F(E^\lambda_+\oplus E^\lambda_-,E^\lambda_\pm))\hspace{-1pt}&\cong\hspace{-1pt}\Hom_{A_n}(V\da_{A_n},(E^\lambda_+\oplus E^\lambda_-)^*\otimes E^\lambda_\pm)\\
&\cong\hspace{-1pt}\Hom_{\s_n}(V,((E^\lambda_+\oplus E^\lambda_-)^*\otimes E^\lambda_\pm)\ua^{\s_n})\\
&\cong\hspace{-1pt}\Hom_{\s_n}(V,(D^\lambda)^*\otimes D^\lambda)\\
&\cong\hspace{-1pt}\Hom_{\s_n}(V,\End_F(D^\lambda)).
\end{align*}
The other equality holds similarly.
\end{proof}

The next lemma will play a major role in Sections \ref{s1} and \ref{s2} to prove that most tensor products are not irreducible. The order on partitions appearing in the lemma is the lexicographic order.

\begin{lemma}\label{l15}
Let $G=\s_n$ or $G=A_n$ and let $V$ and $W$ be $FG$-modules. For $\alpha\vdash n$ let $m_{\alpha}$ be such that there exist $\phi^\alpha_1,\ldots,\phi^\alpha_{m_\alpha}\in\Hom_G(M^\alpha,V^*)$ with $\phi^\alpha_1|_{S^\alpha},\ldots,\phi^\alpha_{m_\alpha}|_{S^\alpha}$ linearly independent. Similarly let $n_\alpha$ be such that there exist $\psi^\alpha_1,\ldots,\psi^\alpha_{n_\alpha}\in\Hom_G(M^\alpha,W)$ with $\psi^\alpha_1|_{S^\alpha},\ldots,\psi^\alpha_{n_\alpha}|_{S^\alpha}$ linearly independent. If $G=\s_n$ let $A$ be the set of all $p$-regular partitions of $n$. If $G=A_n$ let $A$ be the set of $p$-regular partitions $\alpha\vdash n$ with $\alpha>\alpha^\Mull$. Then
\[\dim\Hom_G(V,W)\geq\sum_{\alpha\in A}m_{\alpha}n_{\alpha}.\]
\end{lemma}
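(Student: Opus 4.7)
The plan is to exhibit $\sum_{\alpha\in A}m_\alpha n_\alpha$ explicit elements of $\Hom_G(V,W)$ and prove them linearly independent by a dominance-order argument. For each $\alpha\in A$, the self-duality of $M^\alpha$ (Lemma~\ref{LYoung}) yields the adjunction $\Hom_G(M^\alpha,V^*)\cong\Hom_G(V,M^\alpha)$; I let $\hat\phi^\alpha_i\in\Hom_G(V,M^\alpha)$ be the image of $\phi^\alpha_i$, and define
$$\Theta^\alpha_{i,j} := \psi^\alpha_j\circ\hat\phi^\alpha_i\in\Hom_G(V,W).$$
Under the same adjunction the hypothesis on $\phi^\alpha_i|_{S^\alpha}$ translates to linear independence of the compositions $\pi_\alpha\circ\hat\phi^\alpha_i\colon V\to(S^\alpha)^*$, where $\pi_\alpha\colon M^\alpha\twoheadrightarrow(S^\alpha)^*$ is dual to the inclusion $S^\alpha\hookrightarrow M^\alpha$.

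To prove linear independence of the $\Theta^\alpha_{i,j}$, suppose for contradiction that $\sum c^\alpha_{i,j}\Theta^\alpha_{i,j}=0$ is a nontrivial relation and pick $\alpha_0\in A$ minimal in dominance order among those $\alpha$ with some $c^{\alpha_0}_{i,j}\neq 0$. The decisive ingredient is Young's rule: $M^\alpha$ admits a Specht filtration with factors $S^\gamma$ for $\gamma\unrhd\alpha$, and consequently $\dim\Hom_G(S^{\alpha_0},M^\alpha)=K_{\alpha_0,\alpha}$, which is zero unless $\alpha_0\unrhd\alpha$. By minimality of $\alpha_0$, for every other $\alpha$ appearing in the relation one has $\Hom_G(S^{\alpha_0},M^\alpha)=0$.

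Precomposing the relation with any $g\in\Hom_G(S^{\alpha_0},V)$ therefore annihilates every term with $\alpha\neq\alpha_0$, while for $\alpha=\alpha_0$ the composition $\hat\phi^{\alpha_0}_i\circ g$ lands in the one-dimensional space $\Hom_G(S^{\alpha_0},M^{\alpha_0})=F\cdot\iota_{\alpha_0}$, where $\iota_{\alpha_0}\colon S^{\alpha_0}\hookrightarrow M^{\alpha_0}$ is the canonical inclusion. Writing $\hat\phi^{\alpha_0}_i\circ g=\mu(g,i)\iota_{\alpha_0}$, the relation reduces to
$$\sum_{i,j}c^{\alpha_0}_{i,j}\mu(g,i)\,\psi^{\alpha_0}_j|_{S^{\alpha_0}}=0\quad\text{in}\ \Hom_G(S^{\alpha_0},W).$$
The assumed linear independence of $\{\psi^{\alpha_0}_j|_{S^{\alpha_0}}\}_j$ forces $\sum_i c^{\alpha_0}_{i,j}\mu(g,i)=0$ for every $j$ and every $g$. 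Combined with the linear independence of $\{\pi_{\alpha_0}\circ\hat\phi^{\alpha_0}_i\}_i$ in $\Hom_G(V,(S^{\alpha_0})^*)$, which controls how the linear functionals $\mu(\cdot,i)$ separate points of $\Hom_G(S^{\alpha_0},V)$, this yields $c^{\alpha_0}_{i,j}=0$ for all $i,j$, contradicting the choice of $\alpha_0$.

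For $G=A_n$ the argument is parallel; the restriction $\alpha>\alpha^\Mull$ in the definition of $A$ prevents double-counting that would otherwise arise from $D^\alpha\da_{A_n}\cong D^{\alpha^\Mull}\da_{A_n}$, together with the analogous identification for $M^\alpha\da_{A_n}$ afforded by Lemma~\ref{lM}. The main technical obstacle is the final separation step: transferring linear independence of $\pi_{\alpha_0}\circ\hat\phi^{\alpha_0}_i$ in $\Hom_G(V,(S^{\alpha_0})^*)$ into linear independence of the functionals $\mu(\cdot,i)$ on $\Hom_G(S^{\alpha_0},V)$ requires that the canonical pairing $\Hom_G(S^{\alpha_0},V)\times\Hom_G(V,(S^{\alpha_0})^*)\to\Hom_G(S^{\alpha_0},(S^{\alpha_0})^*)$ be nondegenerate on the relevant subspaces, which must be extracted carefully from the very hypotheses that produced the $\phi^{\alpha_0}_i$ and $\psi^{\alpha_0}_j$.
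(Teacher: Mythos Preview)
Your architecture matches the paper's: build the maps $\Theta^\alpha_{i,j}=\psi^\alpha_j\circ\hat\phi^\alpha_i$ and prove linear independence by isolating a minimal $\alpha$. Two minor corrections first. The identity $\dim\Hom_G(S^{\alpha_0},M^\alpha)=K_{\alpha_0,\alpha}$ fails in positive characteristic; what survives (and is all you use) is $\Hom_{\s_n}(S^{\alpha_0},M^\alpha)=0$ unless $\alpha_0\unrhd\alpha$, and $\dim\Hom_{\s_n}(S^{\alpha_0},M^{\alpha_0})=1$, both consequences of James' Submodule Theorem. Second, for $G=A_n$ your dominance minimality does not rule out $\alpha_0^{\Mull}\unrhd\alpha$, so $\Hom_{A_n}(S^{\alpha_0}{\da},M^\alpha{\da})$ need not vanish; the paper uses lexicographic order, in which $\alpha>\alpha_0>\alpha_0^{\Mull}$ for the relevant $\alpha$, and this is what you should use.

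The real problem is the separation step you flag at the end, and it is a genuine gap rather than a technicality. Precomposing with $g\in\Hom_G(S^{\alpha_0},V)$ can only detect the $\hat\phi^{\alpha_0}_i$ through the pairing $\Hom_G(S^{\alpha_0},V)\times\Hom_G(V,(S^{\alpha_0})^*)\to F$, and this pairing is \emph{not} nondegenerate in the second variable in general. Concretely, take $V$ with $D^{\alpha_0}$ as a composition factor but $\Hom_G(S^{\alpha_0},V)=0$ (for instance a two‐step uniserial module with head $D^{\alpha_0}$ that is not a quotient of $S^{\alpha_0}$); then $m_{\alpha_0}\geq 1$ is perfectly possible while every $\mu(\cdot,i)$ is the zero functional, so your argument collapses. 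The paper avoids this by working with composition factors rather than maps from $S^{\alpha_0}$: since $[M^{\alpha_0}:D^{\alpha_0}]=1$, one can apply the exact functor $T=\Hom_G(P(D^{\alpha_0}),-)$, which sends $M^{\alpha_0}$ to $F$. The hypotheses translate exactly to linear independence of $T(\hat\phi^{\alpha_0}_i)\in T(V)^*$ and of $T(\psi^{\alpha_0}_j)\in T(W)$, whence the $T(\Theta^{\alpha_0}_{i,j})=T(\psi^{\alpha_0}_j)\otimes T(\hat\phi^{\alpha_0}_i)$ are linearly independent rank-one linear maps $T(V)\to T(W)$, giving linear independence of the $\Theta^{\alpha_0}_{i,j}$. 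The paper phrases this as ``if $f$ is a nonzero linear combination then $\Im(f)$ has a composition factor $D^{\alpha_0}$'', which is precisely $T(f)\neq 0$; replacing your precomposition step with this composition-factor argument closes the gap.
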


\begin{proof}
If $G=\s_n$ then by Corollary 12.2 of \cite{JamesBook}, the head of $S^\alpha$ is the unique composition factor of $M^\alpha$ which is isomorphic to $D^\alpha$ and all other composition factors of $M^\alpha$ are of the form $D^\beta$ with $\beta>\alpha$. If $G=A_n$ and $\alpha\in A$ it then follows that the head of $S^\alpha$ is the unique composition factor of $M^\alpha$ which is isomorphic to $E^\alpha$ and all other composition factors of $M^\alpha$ are of the form $E^\beta$ or $E^\beta_\pm$ with $\beta>\alpha$.

Let $(\phi^\alpha_i)^*\in\Hom_G(V,M^\alpha)$ be the dual of $\phi^\alpha_i$ for $1\leq i\leq m_\alpha$. Let $B_i:=\ker(\phi^\alpha_i)^*$. Note that for each $i$, $V/B_i$ is a submodule of $M^\alpha$  and there exists $C_i\supseteq B_i$ with $C_i/B_i\cong S^\alpha$. By the previous paragraph and assumption we have that $C_i\subseteq B_j$ for $j\not=i$. It then follows that the functions $\psi^\alpha_j\circ(\phi^\alpha_i)^*$, with $1\leq i\leq m_\alpha$ and $1\leq j\leq n_\alpha$ are linearly independent and if $f$ is a non-zero linear combination of them then $\Im(f)$ has a composition factor $D^\alpha$ or $E^\alpha$ and all other composition factors of $\Im(f)$ are indexed by partitions $\beta>\alpha$.


It then follows that the functions $\psi^\alpha_j\circ(\phi^\alpha_i)^*$, with $\alpha\in A$, $1\leq i\leq m_{\alpha}$, $1\leq j\leq n_{\alpha}$ are linearly independent (using induction on the minimal $\alpha\in A$ which indexes a composition factor of the image of a linear combination of such functions) and so the lemma holds.
\end{proof}

The following lemmas will be used to prove that in certain cases there exists $\phi\in\Hom_{\s_n}(M^\alpha,\End_F(D^\lambda))$ which does not vanish on $S^\alpha$. The existence of such homomorphisms will then be used to apply Lemma \ref{l15}.

\begin{lemma}\label{c2}
Let $p=5$ and $n\equiv \pm 1\Md 5$ with $n\geq 6$. If $\lambda\vdash n$ and
\begin{align*}
\dim\End_{\s_{n-3}}(\hspace{-1pt}D^\lambda\da_{\s_{n-3}}\hspace{-1pt})\hspace{-1pt}\!>&2\dim\End_{\s_{n-3,2}}(\hspace{-1pt}D^\lambda\da_{\s_{n-3,2}}\hspace{-1pt})\!+\!\dim\End_{\s_{n-2}}(\hspace{-1pt}D^\lambda\da_{\s_{n-2}}\hspace{-1pt})\\
&-\!\dim\End_{\s_{n-3,3}}(\hspace{-1pt}D^\lambda\da_{\s_{n-3,3}}\hspace{-1pt})\!-\!\dim\End_{\s_{n-2,2}}(\hspace{-1pt}D^\lambda\da_{\s_{n-2,2}}\hspace{-1pt})\\
&-\!\dim\End_{\s_{n-1}}(\hspace{-1pt}D^\lambda\da_{\s_{n-1}}\hspace{-1pt})\!+\!1,
\end{align*}
then there exists $\psi\in\Hom_{\s_n}(M^{(n-3,1^3)},\End_F(D^\lambda))$ which does not vanish on $S^{(n-3,1^3)}$.
\end{lemma}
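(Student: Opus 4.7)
The plan is to reduce the hypothesis, via Lemma \ref{l2} and the Young decompositions of Lemmas \ref{l25}/\ref{l26}, to the statement $\Hom_{\s_n}(Y^{(n-3,1^3)},\End_F(D^\la))\neq 0$, and then to exploit the fact that $Y^{(n-3,1^3)}=S^{(n-3,1^3)}$ in both of the congruence classes considered.

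First note that the parabolics appearing in the hypothesis are the Young subgroups $\s_{n-3}=\s_{(n-3,1^3)}$, $\s_{n-3,2}=\s_{(n-3,2,1)}$, $\s_{n-2}=\s_{(n-2,1^2)}$, $\s_{n-1}=\s_{(n-1,1)}$, $\s_{n-3,3}=\s_{(n-3,3)}$ and $\s_{n-2,2}=\s_{(n-2,2)}$ (the shorthand just omits trailing $1$s). By Lemma \ref{l2}, each term of the hypothesis of the form $\dim\End_{\s_\alpha}(D^\la\da_{\s_\alpha})$ therefore equals $\dim\Hom_{\s_n}(M^\alpha,X)$ for the corresponding $\alpha\vdash n$, where $X:=\End_F(D^\la)$. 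Setting $h_\mu:=\dim\Hom_{\s_n}(Y^\mu,X)$ and applying Lemma \ref{l25} (when $n\equiv 1\Md 5$) or Lemma \ref{l26} (when $n\equiv 4\Md 5$), each $\dim\Hom_{\s_n}(M^\alpha,X)$ becomes an explicit non-negative integer combination of $h_{(n)}$, $h_{(n-1,1)}$, $h_{(n-2,2)}$, $h_{(n-3,3)}$, $h_{(n-2,1^2)}$, $h_{(n-3,2,1)}$ and $h_{(n-3,1^3)}$.

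Let $R$ denote the right-hand side of the inequality in the hypothesis. A direct expansion of each $\dim\Hom_{\s_n}(M^\alpha,X)$ and collection of terms yields
\[\dim\Hom_{\s_n}(M^{(n-3,1^3)},X)-R=h_{(n-3,1^3)}+h_{(n)}-1,\]
valid in both congruence cases. Since $Y^{(n)}=S^{(n)}\cong\1_{\s_n}$ and $D^\la$ is irreducible, Schur's lemma gives $h_{(n)}=\dim\End_{\s_n}(D^\la)=1$, so the strict inequality $\dim\Hom_{\s_n}(M^{(n-3,1^3)},X)>R$ is equivalent to $h_{(n-3,1^3)}\geq 1$, and in particular there is a nonzero $\phi\in\Hom_{\s_n}(Y^{(n-3,1^3)},X)$.

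Finally, Lemmas \ref{l25} and \ref{l26} also state $Y^{(n-3,1^3)}=D^{(n-3,1^3)}=S^{(n-3,1^3)}$ in both congruence classes, so $\phi$ is already a nonzero homomorphism out of $S^{(n-3,1^3)}$; extending it by zero on the remaining Young summands of $M^{(n-3,1^3)}$ produces the required $\psi\in\Hom_{\s_n}(M^{(n-3,1^3)},\End_F(D^\la))$ with $\psi|_{S^{(n-3,1^3)}}=\phi\neq 0$. The main obstacle is the displayed identity, which must be verified separately in each congruence case because the multiplicities of $Y^{(n-2,2)}$ and $Y^{(n)}$ inside the various $M^\alpha$ differ between Lemmas \ref{l25} and \ref{l26}; however all coefficients of $h_{(n-3,2,1)}$, $h_{(n-2,1^2)}$, $h_{(n-3,3)}$, $h_{(n-2,2)}$ and $h_{(n-1,1)}$ cancel in each case, leaving only $h_{(n-3,1^3)}+h_{(n)}-1$ as claimed.
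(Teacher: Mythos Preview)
Your proof is correct and follows essentially the same route as the paper: both use Lemmas \ref{l25}/\ref{l26} to see that $Y^{(n-3,1^3)}=S^{(n-3,1^3)}$ is a direct summand of $M^{(n-3,1^3)}$, and then use Lemma \ref{l2} to reduce the hypothesis to $\dim\Hom_{\s_n}(S^{(n-3,1^3)},\End_F(D^\la))>0$. The only organisational difference is that the paper packages your two case-by-case expansions into a single module identity
\[A\oplus M^{(n-3,3)}\oplus M^{(n-2,2)}\oplus M^{(n-1,1)}\cong (M^{(n-3,2,1)})^2\oplus M^{(n-2,1^2)}\oplus M^{(n)}\]
(where $M^{(n-3,1^3)}\cong S^{(n-3,1^3)}\oplus A$), which holds uniformly in both congruence classes by comparison of Young module multiplicities, and thereby avoids the separate verifications you flag at the end.
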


\begin{proof}
From Lemmas \ref{l25} and \ref{l26} we have that in either case $M^{(n-3,1^3)}\cong S^{(n-3,1^3)}\oplus A$. Since $Y^{(n-3,1^3)}\cong S^{(n-3,1^3)}$ and comparing multiplicities of Young modules appear as direct summands of the permutation modules we also have that
\[A\oplus M^{(n-3,3)}\oplus M^{(n-2,2)}\oplus M^{(n-1,1)}\cong (M^{(n-3,2)})^2\oplus M^{(n-2,1^2)}\oplus M^{(n)}.\]
The result then follows from Lemma \ref{l2}.
\end{proof}

\begin{lemma}\label{c1'}
Let $p=5$ and $n\equiv 0\Md 5$ with $n\geq 10$. If $\lambda\vdash n$ and
\begin{align*}
\dim\End_{\s_{n-4,2^2}}\hspace{-1pt}(\hspace{-1pt}D^\lambda\da_{\s_{n-4,2^2}}\hspace{-1pt})\!>&\dim\End_{\s_{n-4,3}}\hspace{-1pt}(\hspace{-1pt}D^\lambda\da_{\s_{n-4,3}}\hspace{-1pt})\!\hspace{-0.5pt}+\!\dim\End_{\s_{n-3,2}}\hspace{-1pt}(\hspace{-1pt}D^\lambda\da_{\s_{n-3,2}}\hspace{-1pt})\!\hspace{-0.5pt}\\
&+\!\dim\End_{\s_{n-2,2}}\hspace{-1pt}(\hspace{-1pt}D^\lambda\da_{\s_{n-2,2}}\hspace{-1pt})\!\hspace{-0.5pt}-\!\dim\End_{\s_{n-3,3}}\hspace{-1pt}(\hspace{-1pt}D^\lambda\da_{\s_{n-3,3}}\hspace{-1pt})\!\hspace{-0.5pt}\\
&-\!\dim\End_{\s_{n-2}}\hspace{-1pt}(\hspace{-1pt}D^\lambda\da_{\s_{n-2}}\hspace{-1pt})
\end{align*}
then there exists $\psi\in\Hom_{\s_n}(M^{(n-4,2^2)},\End_F(D^\lambda))$ which does not vanish on $S^{(n-4,2^2)}$.
\end{lemma}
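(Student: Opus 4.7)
The plan is to mirror the proof of Lemma~\ref{c2}, but with the Young module formulas of Lemma~\ref{l24} (which handles $n \equiv 0 \Md 5$) playing the role of Lemmas~\ref{l25}--\ref{l26}. First I would read off from Lemma~\ref{l24} that $Y^{(n-4,2^2)} = S^{(n-4,2^2)}$ is a direct summand of $M^{(n-4,2^2)}$, giving a decomposition $M^{(n-4,2^2)} \cong S^{(n-4,2^2)} \oplus A$ for some complement $A$. The existence of a $\psi \in \Hom_{\s_n}(M^{(n-4,2^2)}, \End_F(D^\lambda))$ not vanishing on $S^{(n-4,2^2)}$ is then equivalent to
\[
\dim\Hom_{\s_n}(M^{(n-4,2^2)}, \End_F(D^\lambda)) > \dim\Hom_{\s_n}(A, \End_F(D^\lambda)).
\]

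Next I would express $A$ as a virtual sum of full permutation modules. Matching multiplicities of Young modules using the decompositions in Lemma~\ref{l24}, I expect to obtain
\[
A \oplus M^{(n-2,1^2)} \oplus M^{(n-3,3)} \cong M^{(n-4,3,1)} \oplus M^{(n-3,2,1)} \oplus M^{(n-2,2)}.
\]
The coefficients here are essentially forced: among the permutation modules listed in Lemma~\ref{l24} other than $M^{(n-4,2^2)}$ itself, the Young module $Y^{(n-4,3,1)}$ appears only in $M^{(n-4,3,1)}$ and $Y^{(n-3,2,1)}$ only in $M^{(n-3,2,1)}$, so both must occur on the right with coefficient $1$. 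Matching the multiplicities of the remaining Young summands $Y^{(n-4,4)}$, $Y^{(n-2,1^2)}$, $Y^{(n-3,3)}$, $Y^{(n-2,2)}$, $Y^{(n-1,1)}$ then uniquely pins down the correction terms $M^{(n-2,1^2)}$, $M^{(n-3,3)}$ and $M^{(n-2,2)}$ on the left.

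Finally I would invoke Lemma~\ref{l2} to convert each $\dim\Hom_{\s_n}(M^\alpha, \End_F(D^\lambda))$ in this identity into $\dim\End_{\s_\alpha}(D^\lambda\da_{\s_\alpha})$. Since trailing parts of size~$1$ contribute trivial factors (so $\s_{(n-2,1^2)} = \s_{n-2}$, $\s_{(n-4,3,1)} = \s_{n-4,3}$, and $\s_{(n-3,2,1)} = \s_{n-3,2}$), the resulting expression for $\dim\Hom_{\s_n}(A,\End_F(D^\lambda))$ agrees with the right-hand side of the hypothesis; combined with $\dim\Hom_{\s_n}(M^{(n-4,2^2)}, \End_F(D^\lambda)) = \dim\End_{\s_{n-4,2^2}}(D^\lambda\da_{\s_{n-4,2^2}})$, this gives exactly the inequality from the first step, and so $\psi$ exists. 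The only real obstacle is verifying the virtual identity above, but this reduces to a mechanical check of a handful of Young module multiplicities against the tables in Lemma~\ref{l24}, with no deeper difficulty.
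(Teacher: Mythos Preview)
Your proposal is correct and matches the paper's proof essentially line for line: the paper also reads $M^{(n-4,2^2)}\cong S^{(n-4,2^2)}\oplus A$ off Lemma~\ref{l24}, derives the same identity $A\oplus M^{(n-3,3)}\oplus M^{(n-2,1^2)}\cong M^{(n-4,3,1)}\oplus M^{(n-3,2,1)}\oplus M^{(n-2,2)}$ by comparing Young module multiplicities, and concludes via Lemma~\ref{l2}. (Your phrasing ``correction terms \dots\ on the left'' is slightly off since $M^{(n-2,2)}$ sits on the right, but the displayed identity itself is correct.)
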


\begin{proof}
From Lemma \ref{l24} we have that in either case $M^{(n-4,2^2)}\cong S^{(n-4,2^2)}\oplus A$ since $Y^{(n-4,2^2)}\cong S^{(n-4,2^2)}$. Comparing multiplicities of Young modules appear as direct summands of the permutation modules we also have that
\[A\oplus M^{(n-3,3)}\oplus M^{(n-2,1^2)}\cong M^{(n-4,3,1)}\oplus M^{(n-3,2,1)}\oplus M^{(n-2,2)}.\]
The result then follows from Lemma \ref{l2}.
\end{proof}

Lemmas \ref{c2} and \ref{c1'} will be checked to hold for some particular classes of partitions in Sections \ref{s3} and \ref{s4}.

\begin{lemma}\label{l1l2}
Let $1\leq k<p$, $n\geq 2k$ and $\lambda\vdash n$ be $p$-regular. If
\[x:=\dim\End_{\s_{n-k,k}}(D^\lambda\da_{\s_{n-k,k}})-\dim\End_{\s_{n-k+1,k-1}}(D^\lambda\da_{\s_{n-k+1,k-1}}),\]
then there exist $\psi_j:M^{(n-k,k)}\rightarrow\End_F(D^\lambda)$ for $1\leq j\leq x$ such that $\psi_1|_{S^{(n-k,k)}},\ldots,\psi_x|_{S^{(n-k,k)}}$ are linearly independent.
\end{lemma}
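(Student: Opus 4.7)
The plan is to turn the filtration from Lemma \ref{l1} into a short exact sequence, then apply $\Hom_{\s_n}(-,\End_F(D^\lambda))$ and read off the dimensions via Lemma \ref{l2}.

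First, by Lemma \ref{l1} and the paper's convention that filtration factors are counted from the bottom, $S^{(n-k,k)}$ sits as a submodule of $M^{(n-k,k)}$ with quotient $M^{(n-k+1,k-1)}$, giving a short exact sequence
\[0\to S^{(n-k,k)}\to M^{(n-k,k)}\to M^{(n-k+1,k-1)}\to 0.\]
Applying the contravariant left-exact functor $\Hom_{\s_n}(-,\End_F(D^\lambda))$ yields an exact sequence
\[0\to\Hom_{\s_n}(M^{(n-k+1,k-1)},\End_F(D^\lambda))\to\Hom_{\s_n}(M^{(n-k,k)},\End_F(D^\lambda))\xrightarrow{r}\Hom_{\s_n}(S^{(n-k,k)},\End_F(D^\lambda)),\]
where $r$ is precisely the restriction map $\psi\mapsto\psi|_{S^{(n-k,k)}}$.

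Next, I would invoke Lemma \ref{l2} twice to compute
\[\dim\Hom_{\s_n}(M^{(n-k,k)},\End_F(D^\lambda))=\dim\End_{\s_{n-k,k}}(D^\lambda\da_{\s_{n-k,k}})\]
and similarly for the $(n-k+1,k-1)$-term. By exactness, $\ker(r)$ coincides with $\Hom_{\s_n}(M^{(n-k+1,k-1)},\End_F(D^\lambda))$, so the image of $r$ has dimension at least
\[\dim\End_{\s_{n-k,k}}(D^\lambda\da_{\s_{n-k,k}})-\dim\End_{\s_{n-k+1,k-1}}(D^\lambda\da_{\s_{n-k+1,k-1}})=x.\]
Hence I can pick $\psi_1,\ldots,\psi_x\in\Hom_{\s_n}(M^{(n-k,k)},\End_F(D^\lambda))$ whose images under $r$ are linearly independent, which is exactly the conclusion of the lemma.

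There is no real obstacle; this is a direct two-line application of Lemmas \ref{l1} and \ref{l2}. The only point to be careful about is the direction of the exact sequence extracted from the filtration $M^{(n-k,k)}\sim S^{(n-k,k)}|M^{(n-k+1,k-1)}$, namely that $S^{(n-k,k)}$ is the submodule (the bottom factor) rather than the quotient — otherwise the kernel-image count would go the wrong way.
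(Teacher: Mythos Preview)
Your proposal is correct and is exactly the argument the paper intends: its one-line proof ``It follows from Lemmas \ref{l1} and \ref{l2}'' is just your exact-sequence computation written out. The only minor remark is that by exactness the image of $r$ has dimension \emph{equal} to $x$, not merely at least $x$, but this does not affect the conclusion.
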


\begin{proof}
It follows from Lemmas \ref{l1} and \ref{l2}.
\end{proof}

In the remaining part of this section we will study the (in)equalities appearing in the previous lemmas and prove that they hold in for certain families of partitions.

\begin{lemma}\label{l30}
Let $p\geq 3$, $n\geq 4$ and $\lambda\vdash n$ be $p$-regular with $\lambda\not=(n),(n)^\Mull$. Then
\[\dim\End_{\s_{n-2,2}}(D^\lambda\da_{\s_{n-2,2}})>\dim\End_{\s_{n-1}}(D^\lambda\da_{\s_{n-1}}).\]
\end{lemma}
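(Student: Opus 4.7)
My plan proceeds in three steps: reformulate via Frobenius reciprocity, propose a specific endomorphism, and derive a contradiction. First, set $X:=\End_F(D^\lambda)$. By Lemma \ref{l2}, $\dim\End_{\s_\alpha}(D^\lambda\da_{\s_\alpha})=\dim\Hom_{\s_n}(M^\alpha,X)$ for any $\alpha\vdash n$, and applying $\Hom_{\s_n}(-,X)$ to the short exact sequence
\[0\to S^{(n-2,2)}\to M^{(n-2,2)}\to M^{(n-1,1)}\to 0\]
from Lemma \ref{l1} (with $k=2$) shows that
\[\dim\End_{\s_{n-2,2}}(D^\lambda\da_{\s_{n-2,2}})-\dim\End_{\s_{n-1}}(D^\lambda\da_{\s_{n-1}})=\dim\bigl(\mathrm{Im}(\Hom_{\s_n}(M^{(n-2,2)},X)\to\Hom_{\s_n}(S^{(n-2,2)},X))\bigr).\]
Thus it suffices to exhibit a single $\phi:M^{(n-2,2)}\to X$ whose restriction to $S^{(n-2,2)}$ is nonzero.

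The candidate I have in mind is the homomorphism $\phi_c$ corresponding under Frobenius reciprocity to $\rho(c)\in\End_{\s_{n-2,2}}(D^\lambda\da_{\s_{n-2,2}})$, where $c=(n-1,n)$; this lies in the endomorphism ring because $c$ is central in $\s_{n-2,2}$, and explicitly $\phi_c$ sends the coset indexed by the pair $\{a,b\}\subset\{1,\ldots,n\}$ to $\rho((a,b))$. To show $\phi_c|_{S^{(n-2,2)}}\neq 0$ I argue by contradiction: if it vanishes then $\phi_c$ factors through $M^{(n-1,1)}$, which translates under Frobenius (by computing the induced map on canonical Frobenius representatives) to the existence of $\tilde\psi\in\End_{\s_{n-1}}(D^\lambda\da_{\s_{n-1}})$ satisfying $\rho(c)=\tilde\psi+\rho(c)\tilde\psi\rho(c)$. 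Multiplying by $\rho(c)$ on the right (using $c^2=e$) gives $\id=\rho(c)\tilde\psi+\tilde\psi\rho(c)$, and conjugating by any $g\in\s_{n-1}$, noting that the $\s_{n-1}$-conjugates of $c$ exhaust the transpositions $(i,n)$ for $i<n$, produces the system $T_i\tilde\psi+\tilde\psi T_i=\id$ for every $i<n$, where $T_i:=\rho((i,n))$.

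The main obstacle is completing the contradiction from this system under the hypothesis $\lambda\neq(n),(n)^\Mull$. Summing over $i$ and using that the Jucys-Murphy element $L=\sum_{i<n}T_i$ centralizes $F\s_{n-1}$ and acts by the scalar $i$ on each block $e_iD^\lambda$ of $D^\lambda\da_{\s_{n-1}}$, while $\tilde\psi$ preserves each such block (since it commutes with $\s_{n-1}$), one obtains $2i\cdot\tilde\psi|_{e_iD^\lambda}=(n-1)\id$, pinning down $\tilde\psi=\tfrac{n-1}{2i}\id$ on each block with $i\neq 0$. Substituting this into the individual equations $T_k\tilde\psi+\tilde\psi T_k=\id$ and decomposing both sides by the block projections $P_i$ yields the rigid constraints $P_iT_kP_i=\tfrac{i}{n-1}P_i$ for all $k$ and $P_jT_kP_i=0$ whenever $j\notin\{i,-i\}\pmod p$; combined with $T_k^2=\id$, the fact that $\s_n$ is generated by $\s_{n-1}$ together with any one $T_k$, and the irreducibility of $D^\lambda$, the final step is to show that these constraints force $D^\lambda$ to be one-dimensional, contradicting the hypothesis $\lambda\notin\{(n),(n)^\Mull\}$. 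I expect this last step to be where the bulk of the work lies, likely requiring a careful analysis via Okounkov--Vershik-type branching coefficients of how the transpositions can mix the remaining blocks.
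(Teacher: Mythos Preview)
The paper does not give a self-contained proof of this lemma; it simply cites Theorem~3.3 of Kleshchev--Sheth \cite{ks}. Your setup is sound and is in fact close in spirit to the Kleshchev--Sheth argument: the map $\phi_c:\{a,b\}\mapsto\rho((a,b))$ is exactly the homomorphism they study, and its vanishing on $S^{(n-2,2)}$ amounts to the relation $(1,3)+(2,4)=(1,4)+(2,3)$ holding in $\End_F(D^\lambda)$. The reformulation via the exact sequence of Lemma~\ref{l1} and the derivation of $T_i\tilde\psi+\tilde\psi T_i=\id$ for all $i<n$ are correct.

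The gap is in your step four. You assert that the Jucys--Murphy element $L=\sum_{i<n}(i,n)$ acts by the scalar $i$ on $e_iD^\lambda$. This is false in positive characteristic: $e_iD^\lambda$ is only the \emph{generalized} $i$-eigenspace of $L$, and $L-i$ is typically a nonzero nilpotent operator on it. A small counterexample is $p=3$, $\lambda=(3,1)$: here $D^{(3,1)}=S^{(3,1)}$ has weight zero, both removable nodes have residue $2$, so $D^{(3,1)}\da_{\s_3}=e_2D^{(3,1)}$; yet a direct computation on the tabloid basis of $M^{(3,1)}$ shows $(L_4-2)([1]-[4])=-[1]-[2]-[3]\neq 0$. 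Once this scalar claim fails, your determination $\tilde\psi|_{e_iD^\lambda}=\tfrac{n-1}{2i}\id$ collapses, and with it the block-mixing constraints $P_jT_kP_i=0$ you derive afterwards.

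Beyond this error, you yourself flag the final step as incomplete. The Kleshchev--Sheth argument does not pass through a Jucys--Murphy eigenvalue analysis; it exploits the transposition relation $(1,3)+(2,4)=(1,4)+(2,3)$ more directly and combines it with modular branching. If you want a self-contained proof along your lines you must either control the nilpotent part of $L$ on each $e_iD^\lambda$ (which is delicate), or discard the eigenspace route and work directly with the polytabloid relations as in \cite{ks}.
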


\begin{proof}
See Theorem 3.3 of \cite{ks}.
\end{proof}

We will now prove that, in most cases, the inequality in the previous lemma can be improved.

\begin{lemma}\label{l12}
Let $\alpha$ and $\beta$ be partitions such that $\alpha$ is obtained from $\beta$ by removing a $j$-node. If $i\not=j$ then all normal $i$-nodes of $\beta$ are also normal in $\alpha$ and all conormal $i$-nodes of $\alpha$ are also conormal in $\beta$.
\end{lemma}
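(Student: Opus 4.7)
The plan is to work directly from the signature characterisation of normal and conormal nodes (Section 11.1 of \cite{KBook}): reading the Young diagram top-to-bottom, form the $i$-signature as a string of $+$s (one per addable $i$-node) and $-$s (one per removable $i$-node) and cancel $+\cdots-$ pairs greedily; the surviving $-$s index the normal $i$-nodes and the surviving $+$s the conormal ones. I will compare the $i$-signatures of $\beta$ and of $\alpha=\beta\setminus\{(r,c)\}$, whose only difference is that $\beta_r$ has dropped by one.

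Since only $\beta_r$ changes, the addable/removable nodes whose status can possibly differ sit adjacent to $(r,c)$, and their residues lie in $\{j-1,j,j+1\}\pmod p$. Hence if $i\not\equiv j,j\pm 1\pmod p$ the two $i$-signatures coincide and there is nothing to prove. For $i=j-1$ only $(r+1,c)$ and $(r,c-1)$ matter: the first is addable in $\beta$ iff $\beta_{r+1}=c-1$ and never in $\alpha$, while the second is never removable in $\beta$ and is removable in $\alpha$ iff $\beta_{r+1}<c-1$. Since $\beta_{r+1}\leq c-1$, these two conditions are mutually exclusive and exhaustive, so the $i$-signature of $\alpha$ is obtained from that of $\beta$ by exactly one of the two operations \emph{delete a single $+$} or \emph{insert a single $-$}. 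The case $i=j+1$ is entirely symmetric, splitting on $\beta_{r-1}=c$ versus $\beta_{r-1}\geq c+1$ and using $(r-1,c)$, $(r,c+1)$ in place of $(r,c-1)$, $(r+1,c)$, with the boundary $r=1$ falling into the latter branch.

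The combinatorial core of the proof is then the following signature lemma: \emph{if $\sigma'$ is obtained from $\sigma$ either by deleting one $+$ or by inserting one $-$, then every normal $-$ of $\sigma$ remains normal in $\sigma'$.} I would prove it via the standard stack cancellation algorithm, tracking the stack height (number of currently unmatched $+$s) at each prefix. A short induction shows that at every corresponding position the stack height in $\sigma'$ is bounded above by the stack height in $\sigma$. Since a normal $-$ of $\sigma$ is precisely one arriving at an empty stack, the corresponding $-$ in $\sigma'$ also arrives at an empty stack and therefore survives cancellation. This monotonicity step is the only place that genuinely needs care; everything else is bookkeeping.

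The conormal claim follows by duality. The involution $\sigma\mapsto\bar\sigma$ which reverses the string and swaps $+\leftrightarrow-$ bijects conormal $+$s of $\sigma$ with normal $-$s of $\bar\sigma$, and under it the two modifications of the second paragraph become precisely ``insert one $-$'' or ``delete one $+$'' on the $\bar\sigma$-side. Concretely, $\bar\sigma_\beta$ is obtained from $\bar\sigma_\alpha$ by one of these two operations, so the signature lemma applied to $\bar\sigma_\alpha\to\bar\sigma_\beta$ yields that normal $-$s of $\bar\sigma_\alpha$ remain normal in $\bar\sigma_\beta$, i.e., conormal $i$-nodes of $\alpha$ are conormal in $\beta$.
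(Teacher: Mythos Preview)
Your argument is correct, but it is considerably more elaborate than what the paper does. The paper's proof is two lines: since $i\neq j$, every removable $i$-node of $\beta$ is still removable in $\alpha$, and every addable $i$-node of $\alpha$ was already addable in $\beta$; the conclusion then ``follows from the definition''. The point being used implicitly is the interval characterisation of normality: a removable $i$-node in row $s$ is normal iff for every $t\le s$ the row-interval $[t,s]$ contains strictly more removable $i$-nodes than addable $i$-nodes. Passing from $\beta$ to $\alpha$ can only increase the removable count and decrease the addable count in each interval, so this condition is preserved; the conormal statement is the dual inequality.

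You instead split into the cases $i\equiv j\pm1$ and $i\not\equiv j,j\pm1$, pin down precisely which single $+$ is deleted or single $-$ is inserted, and then prove a general signature lemma via stack-height monotonicity. That is all correct (and your stack-height inequality is exactly the monotonicity above, phrased differently), but the case analysis is not needed: the paper's two containments of removable/addable $i$-nodes already deliver the inequality without locating the change. Your route has the merit of being fully explicit at the combinatorial level and of isolating a reusable signature lemma; the paper's route is shorter but leaves the monotonicity step to the reader.
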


\begin{proof}
As $i\not=j$ all removable $i$-nodes of $\beta$ are also removable in $\alpha$ and all addable $i$-nodes of $\alpha$ are also addable in $\beta$. The lemma then follows from the definition of normal and conormal nodes.
\end{proof}

\begin{lemma}\label{l13}
Let $p\geq 3$ and $\lambda\vdash n$ be $p$-regular. If $\epsilon_j(\lambda)>0$. Then
\begin{align*}
\dim\End_{\s_{n-2,2}}(D^\lambda\da_{\s_{n-2,2}})&\geq\sum_i\epsilon_i(\lambda)(\epsilon_i(\lambda)-1)+\sum_{{j:\epsilon_j(\lambda)>0}\atop{i\not=j}}\epsilon_i(\tilde{e}_j(\lambda))\\
&\geq\sum_i\epsilon_i(\lambda)(\epsilon_i(\lambda)-2+|\{j:\epsilon_j(\lambda)>0\}|).
\end{align*}
\end{lemma}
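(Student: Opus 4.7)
The plan is to bound $\dim\End_{\s_{n-2,2}}(M)$ from below, where $M:=D^\lambda\da_{\s_{n-2,2}}$, by exhibiting two families of endomorphism-producing subquotients coming from Lemmas \ref{l10} and \ref{l11}. The crucial observation is that these families live in pairwise disjoint blocks of $\s_{n-2,2}$, so their contributions can simply be added. Let $(b_0,\ldots,b_{p-1})$ be the content of the block of $D^\lambda$.

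For each $i$ with $\epsilon_i(\lambda)\geq 2$, Lemma \ref{l10} produces a direct summand $P_i:=e_i^{(2)}D^\lambda\otimes(D^{(2)}\oplus D^{(1^2)})$ of $M$, lying in the $\s_{n-2}$-block of content $(\ldots,b_i-2,\ldots)$. Since $D^{(2)}$ and $D^{(1^2)}$ are in different blocks of $\s_2$, Lemma \ref{l39} gives $\dim\End(P_i)=2\binom{\epsilon_i(\lambda)}{2}=\epsilon_i(\lambda)(\epsilon_i(\lambda)-1)$. For each ordered pair $(i,j)$ with $i\neq j$ and $\epsilon_j(\lambda)>0$, Lemma \ref{l11} supplies $Q_{i,j}:=e_i(D^{\tilde{e}_j(\lambda)})\otimes b_{i,j}$, and the full sum $Q:=\bigoplus Q_{i,j}$ is both a submodule and a quotient of $M$. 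Each $Q_{i,j}$ lies in the $\s_{n-2}$-block of content $(\ldots,b_i-1,\ldots,b_j-1,\ldots)$ and, again by Lemma \ref{l39}, $\dim\End(Q_{i,j})=\epsilon_i(\tilde{e}_j(\lambda))$.

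The multisets of removed residues, $\{i,i\}$ for the $P_i$'s and $\{k,\ell\}$ with $k\neq\ell$ for the $Q_{k,\ell}$'s, are pairwise distinct; hence the corresponding $\s_{n-2,2}$-blocks are pairwise disjoint. Splitting $M$ into its block components, the block of $P_i$ contributes at least $\dim\End(P_i)$, and in the block $B_{\{k,\ell\}}$ (with $k\neq\ell$) the sum $Q_{k,\ell}\oplus Q_{\ell,k}$ (or just one summand, if the other is not defined) sits as both a submodule and a quotient of $M_{B_{\{k,\ell\}}}$; composing the quotient map, any endomorphism of $Q_{k,\ell}\oplus Q_{\ell,k}$, and the inclusion gives an injection $\End(Q_{k,\ell}\oplus Q_{\ell,k})\hookrightarrow\End(M_{B_{\{k,\ell\}}})$. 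Adding up and using $\dim\End(Q_{k,\ell}\oplus Q_{\ell,k})\geq\dim\End(Q_{k,\ell})+\dim\End(Q_{\ell,k})$ yields the first claimed inequality.

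For the second inequality, Lemma \ref{l12} gives $\epsilon_i(\tilde{e}_j(\lambda))\geq\epsilon_i(\lambda)$ whenever $i\neq j$, since every normal $i$-node of $\lambda$ survives in $\tilde{e}_j(\lambda)$. Therefore the double sum over $(i,j)$ is at least $\sum_i\epsilon_i(\lambda)(|\{j:\epsilon_j(\lambda)>0\}|-1)$ (the $-1$ is harmless when $\epsilon_i(\lambda)=0$), and adding $\sum_i\epsilon_i(\lambda)(\epsilon_i(\lambda)-1)$ gives the stated bound. The main subtle point is the block-disjointness verification, which turns the two a priori entangled constructions into genuinely independent contributions to $\End(M)$.
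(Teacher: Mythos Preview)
Your proof is correct and follows essentially the same approach as the paper: combine Lemmas \ref{l10} and \ref{l11} to obtain a module $B$ (your $\bigoplus_i P_i\oplus Q$) that is both a submodule and a quotient of $D^\lambda\da_{\s_{n-2,2}}$, use block disjointness to separate contributions, compute each via Lemma \ref{l39}, and then apply Lemma \ref{l12} for the second inequality. The paper packages the block argument into a single line (``from block decomposition and Lemmas \ref{l10} and \ref{l11}, $B$ is both a submodule and a quotient''), whereas you spell out the block-by-block bookkeeping and the injection $\End(B)\hookrightarrow\End(M)$ explicitly; the content is the same.
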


\begin{proof}
From Lemma \ref{l45} we have that
\[D^\lambda\da_{\s_{n-2}}=\bigoplus_{i,j}e_je_i(D^\lambda)=\bigoplus_ie_i^2(D^\lambda)\oplus\bigoplus_{i\not=j}e_ie_j(D^\lambda).\]
From block decomposition and from Lemmas \ref{l10} and \ref{l11} we have that, for certain $b_{i,j}\in\{D^{(2)},D^{(1^2)}\}$
\[B:=\bigoplus_i(e_i^{(2)}(D^\la)\otimes (D^{(2)}\oplus D^{(1^2)}))\oplus \bigoplus_{{j:\epsilon_j(\lambda)>0}\atop{i\not=j}}(e_i(D^{\tilde{e}_j(\lambda)})\otimes b_{i,j})\]
is both a submodule and a quotient of $D^\lambda\da_{\s_{n-2,2}}$. In particular, from Lemma \ref{l39},
\begin{align*}
\dim\End_{\s_{n-2,2}}(D^\lambda\da_{\s_{n-2,2}})&\geq\dim\End_{\s_{n-2,2}}(B)\\
&\geq\sum_i \dim\End_{\s_{n-2,2}}(e_i^{(2)}(D^\la)\otimes (D^{(2)}\oplus D^{(1^2)}))\\
&+ \sum_{{j:\epsilon_j(\lambda)>0}\atop{i\not=j}}\dim\End_{\s_{n-2,2}}(e_i(D^\lambda_j)\otimes b_{i,j})\\
&=\sum_i\epsilon_i(\lambda)(\epsilon_i(\lambda)-1)+\sum_{{j:\epsilon_j(\lambda)>0}\atop{i\not=j}}\epsilon_i(\tilde{e}_j(\lambda)).
\end{align*}
From Lemma \ref{l12} we also have that if $\epsilon_j(\lambda)>0$ then $\epsilon_i(\tilde{e}_j(\lambda))\geq\epsilon_i(\lambda)$ for $i\not=j$. So
\begin{align*}
&\sum_i\epsilon_i(\lambda)(\epsilon_i(\lambda)-1)+\sum_{{j:\epsilon_j(\lambda)>0}\atop{i\not=j}}\epsilon_i(\tilde{e}_j(\lambda))\\
&\geq\sum_i\epsilon_i(\lambda)(\epsilon_i(\lambda)-1)+\sum_{{j:\epsilon_j(\lambda)>0}\atop{i\not=j}}\epsilon_i(\lambda)\\
&=\sum_i\epsilon_i(\lambda)(\epsilon_i(\lambda)-2)+\sum_{j:\epsilon_j(\lambda)>0}\sum_i\epsilon_i(\lambda)\\
&=\sum_i\epsilon_i(\lambda)(\epsilon_i(\lambda)-2+|\{j:\epsilon_j(\lambda)>0\}|).
\end{align*}
\end{proof}

The next lemma compares normal and conormal nodes of $\la$ and $\la^\Mull$ and will be used to apply Lemma \ref{l13} in certain situations. A proof of it could also be obtained using Theorems 4.2 and 4.7 of \cite{k4}.

\begin{lemma}\label{l17}
For any partition $\lambda$ and for any residue $i$,
\[\epsilon_i(\lambda)=\epsilon_{-i}(\lambda^\Mull)\hspace{12pt}\mbox{and}\hspace{12pt}\phi_i(\lambda)=\phi_{-i}(\lambda^\Mull).\]
If $\epsilon_i(\lambda)>0$ then $\tilde{e}_i(\lambda)^\Mull=\tilde{e}_{-i}(\lambda^\Mull)$, while if $\phi_i(\lambda)>0$ then $\tilde{f}_i(\lambda^\Mull)=\tilde{f}_{-i}(\lambda^\Mull)$.
\end{lemma}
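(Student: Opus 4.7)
The plan is to deduce the lemma from a single functorial intertwining identity: tensoring with $\sgn$ interchanges the branching functors $e_i,f_i$ with $e_{-i},f_{-i}$. Concretely, for any $F\s_n$-module $M$ I aim to prove
\[(e_i M)\otimes\sgn_{\s_{n-1}}\cong e_{-i}(M\otimes\sgn_{\s_n}),\qquad (f_i M)\otimes\sgn_{\s_{n+1}}\cong f_{-i}(M\otimes\sgn_{\s_n}).\]
Granting this, specialising to $M=D^\lambda$ (so that $M\otimes\sgn=D^{\lambda^\Mull}$) gives the lemma at once. Indeed, by Lemma \ref{l39} the left-hand side $(e_i D^\lambda)\otimes\sgn$ is nonzero iff $\epsilon_i(\lambda)>0$ and, when nonzero, is indecomposable self-dual with head $D^{\tilde{e}_i(\lambda)}\otimes\sgn=D^{\tilde{e}_i(\lambda)^\Mull}$ and endomorphism algebra of dimension $\epsilon_i(\lambda)$; comparing with the corresponding data for $e_{-i}D^{\lambda^\Mull}$ on the right immediately yields $\epsilon_i(\lambda)=\epsilon_{-i}(\lambda^\Mull)$ and $\tilde{e}_i(\lambda)^\Mull=\tilde{e}_{-i}(\lambda^\Mull)$. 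The $\phi_i$ and $\tilde f_i$ assertions fall out identically from Lemma \ref{l40}.

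To establish the intertwining I would first check that restriction and induction both commute with $-\otimes\sgn$. For restriction this is immediate, since $\sgn_{\s_{n+1}}\da_{\s_n}\cong\sgn_{\s_n}$. For induction it follows from the standard tensor identity $(N\otimes L\da)\ua\cong N\ua\otimes L$ applied with $L=\sgn_{\s_{n+1}}$. Hence both sides of each displayed isomorphism are block components of one and the same $F\s_{n\pm 1}$-module, and it only remains to identify the correspondence of blocks under $-\otimes\sgn$. I claim $-\otimes\sgn$ permutes blocks of $\s_m$ by sending the block with residue-content vector $(b_0,b_1,\ldots,b_{p-1})$ to the block with content $(b_0,b_{-1},\ldots,b_{-(p-1)})$. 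This is immediate from the classical identity $S^\lambda\otimes\sgn\cong S^{\lambda'}$: since the residue of a node $(r,c)\in\lambda$ is the negative of the residue of the transposed node $(c,r)\in\lambda'$, we have $c_i(\lambda')=c_{-i}(\lambda)$ for each $i$. Thus the block of $e_i M$ (obtained from that of $M$ by decreasing the $i$-th content entry by $1$) is sent to the block of $e_{-i}(M\otimes\sgn)$, which is exactly what was needed.

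The only genuinely non-formal step is this block-permutation rule for $-\otimes\sgn$, and this is where I would spend most care. I have chosen to derive it from the Specht-transpose identity rather than from a direct analysis of the Mullineux symbol, since the former is characteristic-free and reduces to the elementary behaviour of residues under transposition. As the author notes, the lemma can alternatively be extracted from the explicit crystal-theoretic description of the Mullineux map in Theorems 4.2 and 4.7 of \cite{k4}; my approach essentially packages that compatibility into one representation-theoretic identity.
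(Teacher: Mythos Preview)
Your proposal is correct and is essentially the same argument the paper gives, just made explicit: the paper's one-line proof says ``compare the block decomposition of $D^\lambda\da_{\s_{n-1}}$ with that of $D^{\lambda^\Mull}\da_{\s_{n-1}}\cong D^\lambda\da_{\s_{n-1}}\otimes\sgn$'' and then invokes Lemma~\ref{l39} (and dually for $\ua$ and Lemma~\ref{l40}). Your intertwining identity $(e_iM)\otimes\sgn\cong e_{-i}(M\otimes\sgn)$ together with the content-reversal rule for $-\otimes\sgn$ on blocks is exactly what underlies that comparison, so the two approaches coincide.
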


\begin{proof}
This follows from Lemma \ref{l39} and by comparing block decomposition of $D^\lambda\da_{\s_{n-1}}$ and of $D^{\lambda^\Mull}\da_{\s_{n-1}}\cong D^\lambda\da_{\s_{n-1}}\otimes\sgn$ (or of $D^\lambda\ua^{\s_{n+1}}$ and of $D^{\lambda^\Mull}\ua^{\s_{n+1}}\cong D^\lambda\ua^{\s_{n+1}}\otimes\sgn$).
\end{proof}

\begin{lemma}\label{l20}
Let $p\geq 3$ and $\lambda$ be $p$-regular. If $\lambda$ has at least 3 normal nodes then
\[\dim\End_{\s_{n-2,2}}(D^\mu\da_{\s_{n-2,2}})>\dim\End_{\s_{n-1}}(D^\mu\da_{\s_{n-1}})+1.\]
If further $\lambda=\lambda^\Mull$ then
\[\dim\End_{\s_{n-2,2}}(D^\mu\da_{\s_{n-2,2}})>\dim\End_{\s_{n-1}}(D^\mu\da_{\s_{n-1}})+2.\]
\end{lemma}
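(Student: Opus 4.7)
I plan to combine the lower bound on $\dim\End_{\s_{n-2,2}}(D^\lambda\da_{\s_{n-2,2}})$ from Lemma~\ref{l13} with the exact formula $\dim\End_{\s_{n-1}}(D^\lambda\da_{\s_{n-1}})=\sum_i\epsilon_i(\lambda)$ from Lemma~\ref{l53}. Writing $\epsilon_i:=\epsilon_i(\lambda)$, $N:=\sum_i\epsilon_i\ge 3$, and $k:=|\{i:\epsilon_i>0\}|$, the two lemmas reduce the two claims to the arithmetic inequalities
\[\sum_i\epsilon_i^2+(k-3)N\ge 2\qquad\text{(general)},\qquad \sum_i\epsilon_i^2+(k-3)N\ge 3\quad(\lambda=\lambda^\Mull),\]
which I would then verify by a short case analysis on $k$.

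For $k\ge 3$ both inequalities are immediate: $\sum\epsilon_i^2\ge k\ge 3$ and $(k-3)N\ge 0$. For $k=1$ a single $\epsilon_i$ equals $N$, so the left-hand side is $N^2-2N=N(N-2)\ge 3$ for $N\ge 3$, settling both bounds. For $k=2$, writing the two non-zero values as $a,b\ge 1$ with $a+b=N$, the extremal choice $\{1,N-1\}$ gives $\sum\epsilon_i^2-N\ge 1+(N-1)^2-N=N^2-3N+2\ge 2$ for $N\ge 3$, which establishes the general statement.

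The one case requiring extra input is $k=2$ with $\lambda=\lambda^\Mull$: for $N=3$ the bound $a^2+b^2-N\ge 2$ is actually tight (attained at $\{a,b\}=\{1,2\}$) and not enough to reach the required $3$. Here I would invoke Lemma~\ref{l17}, which for $\lambda=\lambda^\Mull$ forces $\epsilon_i=\epsilon_{-i}$ for every residue $i$. Since $p\ge 3$, having exactly two non-zero $\epsilon_i$ then forces them to form a pair $\{i,-i\}$ with $i\ne 0$, so $\epsilon_i=\epsilon_{-i}=N/2$, and in particular $N$ is even and $\ge 4$. Then $\sum\epsilon_i^2-N=N^2/2-N\ge 4$, which is more than enough.

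The main (and essentially only) obstacle is exactly this $k=2$, $\lambda=\lambda^\Mull$ case, where the Mullineux symmetry from Lemma~\ref{l17} is genuinely needed to push the threshold from $N\ge 3$ up to the even value $N\ge 4$; in all other cases the arithmetic is slack.
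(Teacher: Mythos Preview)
Your proof is essentially the paper's own argument: reduce via Lemmas~\ref{l53} and~\ref{l13} to the arithmetic inequality $\sum_i\epsilon_i^2+(k-3)N\ge 2$ (resp.\ $\ge 3$), then split on $k=|\{i:\epsilon_i>0\}|$, invoking Lemma~\ref{l17} in the Mullineux-fixed $k=2$ case to force $N$ even and $\ge 4$.

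One small slip: in the general $k=2$ case you treat $\{1,N-1\}$ as the minimising ``extremal'' for $a^2+b^2$ subject to $a+b=N$, but for a convex function the minimum is at the \emph{balanced} split $\{\lfloor N/2\rfloor,\lceil N/2\rceil\}$; the inequality $\sum\epsilon_i^2-N\ge 1+(N-1)^2-N$ you wrote is actually false for $N\ge 4$ (e.g.\ $N=4$, $a=b=2$ gives $4<6$). The conclusion is unaffected, since for $N=3$ the two coincide and for $N\ge 4$ even the balanced value is $\ge 4$. The paper sidesteps this by rewriting $a^2+b^2-N=a(a-1)+b(b-1)$ and observing that WLOG $a\ge 2$ gives $a(a-1)\ge 2$ while $b(b-1)\ge 0$.
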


\begin{proof}
From Lemmas \ref{l53} and \ref{l13} it is enough to prove that
\[\sum_i\epsilon_i(\lambda)(\epsilon_i(\lambda)-3+|\{j:\epsilon_j(\lambda)>0\}|)>1\]
or
\[\sum_i\epsilon_i(\lambda)(\epsilon_i(\lambda)-3+|\{j:\epsilon_j(\lambda)>0\}|)>2\]
when $\lambda$ has at least 3 normal nodes (the last inequality only when $\lambda=\lambda^\Mull$).

Assume first that $|\{j:\epsilon_j(\lambda)>0\}|=1$ and let $k$ with $\epsilon_k(\lambda)>0$. Then $\epsilon_i(\lambda)\geq 3$ and so
\[\sum_i\epsilon_i(\lambda)(\epsilon_i(\lambda)-3+|\{j:\epsilon_j(\lambda)>0\}|)=\epsilon_k(\lambda)(\epsilon_k(\lambda)-2)\geq \epsilon_k(\lambda)\geq 3.\]

Assume next that $|\{j:\epsilon_j(\lambda)>0\}|=2$ and let $k\not=l$ with $\epsilon_k(\lambda),\epsilon_l(\lambda)>0$. We can assume that $\epsilon_k(\lambda)\geq 2$. Then
\begin{align*}
\sum_i\epsilon_i(\lambda)(\epsilon_i(\lambda)-3+|\{j:\epsilon_j(\lambda)>0\}|)&=\epsilon_k(\lambda)(\epsilon_k(\lambda)-1)+\epsilon_l(\lambda)(\epsilon_l(\lambda)-1)\\
&\geq\epsilon_k(\lambda)\\
&\geq 2.
\end{align*}
Assume now that $\lambda=\lambda^\Mull$. Then from Lemma \ref{l17}, we have that $k=-l$ and $\epsilon_k(\lambda)=\epsilon_l(\lambda)\geq 2$. In this case
\begin{align*}
\sum_i\epsilon_i(\lambda)(\epsilon_i(\lambda)-3+|\{j:\epsilon_j(\lambda)>0\}|)&=\epsilon_k(\lambda)(\epsilon_k(\lambda)-1)+\epsilon_l(\lambda)(\epsilon_l(\lambda)-1)\\
&\geq2\epsilon_k(\lambda)\\
&\geq 4.
\end{align*}

Assume last that $|\{j:\epsilon_j(\lambda)>0\}|\geq 3$ and let $k,l,h$ pairwise different with $\epsilon_k(\lambda),\epsilon_l(\lambda),\epsilon_h(\lambda)>0$. Then
\[\sum_i\epsilon_i(\lambda)(\epsilon_i(\lambda)-3+|\{j:\epsilon_j(\lambda)>0\}|)\geq\epsilon_k(\lambda)^2+\epsilon_l(\lambda)^2+\epsilon_h(\lambda)^2\geq 3.\]
\end{proof}

We will now prove that Lemma \ref{l30} can be extended in many cases also when $\la$ has only two normal nodes, in particular for Mullineux-fixed partitions with two normal nodes. This will though require a more careful analysis than the at least three normal nodes case and will also require some properties of Mullineux-fixed partitions with two normal nodes.

\begin{lemma}\label{l18}
Let $p\geq 3$, $n\geq 4$ and $\lambda=\lambda^\Mull\vdash n$ be a partition with exactly 2 normal nodes. If there exist $i\not=j$ with $\epsilon_i(\lambda),\epsilon_j(\lambda)\not=0$ then $\tilde{e}_i(\lambda)$ and $\tilde{e}_j(\lambda)$ are not JS-partitions.
\end{lemma}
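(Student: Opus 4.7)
Since $\lambda=\lambda^\Mull$, Lemma \ref{l17} gives $\epsilon_k(\lambda)=\epsilon_{-k}(\lambda)$ for every residue $k$. Combined with the assumption that $\lambda$ has exactly two normal nodes of two different residues and that $p$ is odd, this forces $i\neq 0$, $j=-i$ and $\epsilon_i(\lambda)=\epsilon_{-i}(\lambda)=1$, with $\epsilon_k(\lambda)=0$ for every other $k$. Write $A$ and $B$ for the normal $i$- and $(-i)$-nodes of $\lambda$, and set $\nu:=\lambda\setminus\{A,B\}$. I argue by contradiction and assume $\mu:=\tilde{e}_i(\lambda)=\lambda\setminus A$ is a JS-partition; since being JS is Mullineux-invariant (as $D^{\mu^\Mull}\!\da_{\s_{n-2}}\cong D^\mu\!\da_{\s_{n-2}}\otimes\sgn$) and $\mu^\Mull=\tilde{e}_{-i}(\lambda)$ by Lemma \ref{l17}, this simultaneously rules out both partitions in the lemma. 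By Lemma \ref{l12}, $B$ remains a normal $(-i)$-node of $\mu$ and hence is its unique normal node; symmetrically, $A$ is the unique normal node of $\mu^\Mull$.

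Next I compute the relevant restrictions. Lemma \ref{l45} gives $D^\lambda\da_{\s_{n-1}}=e_iD^\lambda\oplus e_{-i}D^\lambda$, and Lemma \ref{l39} forces $e_iD^\lambda\cong D^\mu$ and $e_{-i}D^\lambda\cong D^{\mu^\Mull}$: every composition factor $D^\psi$ of $e_iD^\lambda$ satisfies $\epsilon_i(\psi)\leq\epsilon_i(\lambda)-1=0$ with equality forcing $\psi=\mu$, so all factors equal $\mu$, and the multiplicity $[e_iD^\lambda:D^\mu]=\binom{1}{1}=1$ forces irreducibility. Applying the JS-property of $\mu$ and $\mu^\Mull$ then yields
\[D^\lambda\da_{\s_{n-2}}\cong D^\nu\oplus D^\nu,\]
and in particular $\dim D^\lambda=2\dim D^\nu$.

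Now I apply Lemma \ref{l11} with the above data: the only contributing terms are $(j,i')=(i,-i)$ and $(j,i')=(-i,i)$, producing a submodule
\[(D^\nu\otimes b_{-i,i})\oplus(D^\nu\otimes b_{i,-i})\subseteq D^\lambda\da_{\s_{n-2,2}}\]
with $b_{-i,i},b_{i,-i}\in\{D^{(2)},D^{(1^2)}\}$, and the dimension count above promotes this inclusion to an equality. Therefore $\dim\End_{\s_{n-2,2}}(D^\lambda\da_{\s_{n-2,2}})$ equals $2$ if $b_{-i,i}\neq b_{i,-i}$ and $4$ if they coincide, while Lemma \ref{l53} gives $\dim\End_{\s_{n-1}}(D^\lambda\da_{\s_{n-1}})=2$. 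Lemma \ref{l30} rules out the value $2$, so $b_{-i,i}=b_{i,-i}=:b$ and, restricting to the $\s_2$-factor, the transposition $(n-1,n)$ acts on the whole of $D^\lambda$ as a single scalar $\epsilon\in\{\pm 1\}$. The final contradiction comes from $\lambda=\lambda^\Mull$: an isomorphism $\sigma\colon D^\lambda\to D^\lambda\otimes\sgn$ must conjugate $(n-1,n)$ to $\sgn((n-1,n))\cdot(n-1,n)=-(n-1,n)$, i.e.\ send the scalar $\epsilon$ to $-\epsilon$, which is impossible when $p\neq 2$. The main obstacle is the middle step: one must show that the submodule from Lemma \ref{l11} exhausts $D^\lambda\da_{\s_{n-2,2}}$ on dimensional grounds and then use Lemma \ref{l30} to force the two $\s_2$-tensor-factors to agree.
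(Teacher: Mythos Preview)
Your argument is correct and uses the same two ingredients as the paper's proof --- Lemma~\ref{l30} and the $\sgn$-invariance coming from $\lambda=\lambda^\Mull$ --- but you apply them in the opposite order. The paper observes directly that $D^\lambda\da_{\s_{n-2,2}}\otimes\sgn\cong D^\lambda\da_{\s_{n-2,2}}$, which forces the two simple summands to be $D^\nu\otimes D^{(2)}$ and $D^{\nu^\Mull}\otimes D^{(1^2)}$; this gives $\dim\End_{\s_{n-2,2}}(D^\lambda\da_{\s_{n-2,2}})=2$, contradicting Lemma~\ref{l30}. You instead use Lemma~\ref{l11} and a dimension count to identify the restriction as $(D^\nu\otimes b_1)\oplus(D^\nu\otimes b_2)$, let Lemma~\ref{l30} rule out $b_1\neq b_2$, and then invoke $\sgn$-invariance to rule out $b_1=b_2$. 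The paper's route is a little shorter since it avoids the explicit appeal to Lemma~\ref{l11} and the dimension argument; your route has the mild advantage of making the identification $\tilde e_{-i}\tilde e_i(\lambda)=\tilde e_i\tilde e_{-i}(\lambda)=\nu$ completely explicit.
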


\begin{proof}
Assume that $\tilde{e}_i(\lambda)$ and $\tilde{e}_j(\lambda)$ are JS-partitions. Then, from Lemmas \ref{l45} and \ref{l39}, $D^\lambda\da_{\s_{n-2}}$ has only two composition factors. Since $\lambda=\lambda^\Mull$ it follows that
\[D^\lambda\da_{\s_{n-2,2}}\cong (D^\nu\otimes D^{(2)})\oplus (D^{\nu^\Mull}\otimes D^{(1^2)})\]
for a certain partition $\nu$. Due to Lemma \ref{l53} this contradicts Lemma \ref{l30}.

Since $\lambda=\lambda^\Mull$ we have from Lemma \ref{l17} that $i=-j$ and that $\tilde{e}_i(\lambda)$ and $\tilde{e}_j(\lambda)$ have the same number of normal nodes. In particular neither $\tilde{e}_i(\lambda)$ nor $\tilde{e}_j(\lambda)$ is a JS-partition.
\end{proof}

\begin{lemma}\label{l21}
Let $p\geq 3$ and $\lambda$ be a $p$-regular partition with 2 normal nodes. Assume that there exist $i\not=j$ with $\epsilon_i(\lambda),\epsilon_j(\lambda)=1$. If
\[\dim\End_{\s_{n-2,2}}(D^\lambda\da_{\s_{n-2,2}})=\dim\End_{\s_{n-1}}(D^\lambda\da_{\s_{n-1}})+1,\]
then, up to exchange, $\tilde{e}_i(\lambda)$ is a JS-partition and $\tilde{e}_j(\lambda)$ has at most 2 normal nodes. Also $\lambda\not=\lambda^\Mull$.
\end{lemma}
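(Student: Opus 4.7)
The plan is to obtain a tight numerical bound on the number of normal nodes of $\tilde{e}_i(\lambda)$ and $\tilde{e}_j(\lambda)$ by feeding the hypothesis into the lower bound of Lemma~\ref{l13}. I first translate the hypothesis: by Lemma~\ref{l53}, $\dim\End_{\s_{n-1}}(D^\lambda\da_{\s_{n-1}}) = \epsilon_i(\lambda)+\epsilon_j(\lambda) = 2$, so the assumption becomes $\dim\End_{\s_{n-2,2}}(D^\lambda\da_{\s_{n-2,2}}) = 3$.

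I then apply Lemma~\ref{l13}. The first sum vanishes, since every $\epsilon_r(\lambda)$ is $0$ or $1$. The crucial step is to simplify the second sum: by Lemma~\ref{l47} with $r=1$ we have $\epsilon_i(\tilde{e}_i(\lambda)) = \epsilon_i(\lambda)-1 = 0$ and symmetrically $\epsilon_j(\tilde{e}_j(\lambda)) = 0$, so
\[\sum_{r\neq i}\epsilon_r(\tilde{e}_i(\lambda)) = N(\tilde{e}_i(\lambda)) \quad\text{and}\quad \sum_{r\neq j}\epsilon_r(\tilde{e}_j(\lambda)) = N(\tilde{e}_j(\lambda)),\]
where I write $N(\mu)$ for the total number of normal nodes of $\mu$. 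The bound from Lemma~\ref{l13} therefore collapses to
\[N(\tilde{e}_i(\lambda)) + N(\tilde{e}_j(\lambda)) \leq 3.\]

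Next I invoke Lemma~\ref{l12}: normal $j$-nodes of $\lambda$ persist in $\tilde{e}_i(\lambda)$, so $N(\tilde{e}_i(\lambda)) \geq \epsilon_j(\tilde{e}_i(\lambda)) \geq \epsilon_j(\lambda) = 1$, and symmetrically $N(\tilde{e}_j(\lambda)) \geq 1$. The squeezed inequality $1+1 \leq N(\tilde{e}_i(\lambda)) + N(\tilde{e}_j(\lambda)) \leq 3$ forces one of these numbers to equal $1$ (so the corresponding partition is a JS-partition by definition) while the other is at most $2$. Up to swapping $i$ and $j$, this yields the first conclusion.

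For the final claim, I argue by contradiction: suppose $\lambda = \lambda^\Mull$. Since $\lambda$ has exactly two normal nodes of distinct residues $i \neq j$, this is precisely the hypothesis of Lemma~\ref{l18}, which asserts that neither $\tilde{e}_i(\lambda)$ nor $\tilde{e}_j(\lambda)$ is a JS-partition. This contradicts what was just established, hence $\lambda \neq \lambda^\Mull$.

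The main subtlety is spotting that Lemma~\ref{l47} annihilates the diagonal terms $\epsilon_i(\tilde{e}_i(\lambda))$ and $\epsilon_j(\tilde{e}_j(\lambda))$ in Lemma~\ref{l13}'s sum, turning an opaque count into the clean quantity $N(\tilde{e}_i(\lambda)) + N(\tilde{e}_j(\lambda))$. Without that collapse the numerical window $2 \leq \cdot \leq 3$ would be too loose to isolate a JS-partition.
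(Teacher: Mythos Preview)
Your proof is correct and follows essentially the same route as the paper's: apply Lemma~\ref{l13}, observe that the first sum vanishes because each $\epsilon_r(\lambda)\in\{0,1\}$, use that $\epsilon_i(\tilde e_i(\lambda))=\epsilon_j(\tilde e_j(\lambda))=0$ to rewrite the second sum as the total number of normal nodes of $\tilde e_i(\lambda)$ and $\tilde e_j(\lambda)$, bound this by $3$ via the hypothesis and Lemma~\ref{l53}, and then conclude with Lemma~\ref{l18} for the Mullineux statement. The only cosmetic differences are that the paper cites Lemma~\ref{l39} rather than Lemma~\ref{l47} for the vanishing of $\epsilon_i(\tilde e_i(\lambda))$, and that the paper leaves implicit the lower bound $N(\tilde e_i(\lambda)),N(\tilde e_j(\lambda))\geq 1$ (every nonempty $p$-regular partition has a normal node) rather than invoking Lemma~\ref{l12}.
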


\begin{proof}
From Lemma \ref{l39} we have that $\epsilon_i(\tilde{e}_i(\lambda))=\epsilon_i(\lambda)-1=0$ and similarly $\epsilon_j(\tilde{e}_j(\lambda))=0$. So from Lemmas \ref{l53} and \ref{l13} and by assumption
\begin{align*}
\sum_k\epsilon_k(\tilde{e}_i(\lambda))+\sum_k\epsilon_k(\tilde{e}_j(\lambda))&=\sum_{k\not=i}\epsilon_k(\tilde{e}_i(\lambda))+\sum_{k\not=j}\epsilon_k(\tilde{e}_j(\lambda))\\
&\leq \dim\End_{\s_{n-2,2}}(D^\mu\da_{\s_{n-2,2}})\\
&=\dim\End_{\s_{n-1}}(D^\mu\da_{\s_{n-1}})+1\\
&=3.
\end{align*}
So $\tilde{e}_i(\lambda)$ and $\tilde{e}_j(\lambda)$ have in total at most 3 normal nodes, from which the first part of the lemma follows. The second part follows from Lemma \ref{l18}.
\end{proof}

\begin{lemma}\label{l22}
Let $p\geq 3$ and $\lambda$ be a $p$-regular partition with 2 normal nodes. Assume that there exists $i$ with $\epsilon_i(\lambda)=2$ and let $\nu$ be obtained from $\lambda$ by removing the top removable node of $\lambda$. If
\[\dim\End_{\s_{n-2,2}}(D^\lambda\da_{\s_{n-2,2}})=\dim\End_{\s_{n-1}}(D^\lambda\da_{\s_{n-1}})+1,\]
then $\tilde{e}_i(\lambda)$ is a JS-partition and $\nu$ is either a JS-partition or it is not $p$-regular. Also $\lambda\not=\lambda^\Mull$.
\end{lemma}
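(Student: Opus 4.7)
By Lemma~\ref{l53}, $\dim\End_{\s_{n-1}}(D^\lambda\da_{\s_{n-1}}) = \epsilon_i(\lambda) = 2$, so the hypothesis rewrites as $\dim\End_{\s_{n-2,2}}(D^\lambda\da_{\s_{n-2,2}}) = 3$. Feeding this into Lemma~\ref{l13}, and using that $i$ is the only residue with $\epsilon_i(\lambda) > 0$, gives
\[3 \;\geq\; \epsilon_i(\lambda)(\epsilon_i(\lambda)-1) + \sum_{k\neq i} \epsilon_k(\tilde{e}_i(\lambda)) \;=\; 2 + \sum_{k\neq i} \epsilon_k(\tilde{e}_i(\lambda)),\]
so $\sum_{k\neq i}\epsilon_k(\tilde{e}_i(\lambda)) \leq 1$. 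Combined with $\epsilon_i(\tilde{e}_i(\lambda)) = 1$ from Lemma~\ref{l39}, the partition $\tilde{e}_i(\lambda)$ has at most two normal nodes. I also identify $\nu$ explicitly: the top removable node $T$ of any $p$-regular partition is automatically normal, since writing $T=(i^*,a)$ with $a=\lambda_1=\cdots=\lambda_{i^*}$, the only addable node in rows $1,\dots,i^*$ is $(1,a+1)$, of residue $a$, while $T$ has residue $a-i^*$, and $p$-regularity forces $1\leq i^*\leq p-1$, so the two residues differ modulo $p$. Because all normal nodes of $\lambda$ sit at residue $i$, the normal node $T$ must be the top normal $i$-node $A$, whence $\nu=\lambda\setminus A$.

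To force $\tilde{e}_i(\lambda)$ to be JS and $\nu$ to be JS or not $p$-regular, I would refine the lower bound of Lemma~\ref{l13}. By Lemma~\ref{l56}, when $\nu$ is $p$-regular, $D^\nu$ occurs as a composition factor of $e_iD^\lambda$ with multiplicity one. Suppose $\tilde{e}_i(\lambda)$ has a second normal node at some residue $k\neq i$, or $\nu$ is $p$-regular with a normal $k$-node for some $k$. Combining Lemmas~\ref{l10}, \ref{l11} and~\ref{l16} then produces an additional self-dual sub/quotient of $D^\lambda\da_{\s_{n-2,2}}$ of the form $e_k(D^\nu)\otimes b'_{k,i}$ whose simple socle is distinct from the socles of the pieces appearing in the proof of Lemma~\ref{l13}. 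This raises the lower bound for $\dim\End$ to $4$, contradicting our equality $\dim\End=3$.

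For the last assertion, assume $\lambda=\lambda^\Mull$. Lemma~\ref{l17} gives $\epsilon_{-i}(\lambda)=2$, and since only residue $i$ supports normal nodes, $-i\equiv i\pmod p$, forcing $i=0$. By Lemma~\ref{l17}, $\tilde{e}_0(\lambda)$ is then Mullineux-fixed, and by the previous paragraph it is a JS-partition; its unique normal node must lie at residue $0$, since a normal $r$-node with $r\neq 0$ would be paired via Mullineux with an additional normal $(-r)$-node. The Mullineux map identifies the top and bottom normal $0$-nodes of $\lambda=\lambda^\Mull$, so $(\lambda\setminus A)^\Mull=\lambda\setminus B=\tilde{e}_0(\lambda)$. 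When $\nu=\lambda\setminus A$ is $p$-regular, combining this with $\tilde{e}_0(\lambda)^\Mull=\tilde{e}_0(\lambda)$ yields $\nu=\tilde{e}_0(\lambda)$, hence $A=B$, contradicting that they are distinct normal nodes; the non-$p$-regular case is eliminated by a parallel symmetry argument. The main obstacle is the refined dimension bound in the second paragraph, which requires a careful socle analysis via Lemma~\ref{l16} to verify that the extra summand genuinely contributes a new dimension and is not absorbed into the already-counted pieces.
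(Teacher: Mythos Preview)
Your proposal has the right starting point (the hypothesis is $\dim\End_{\s_{n-2,2}}(D^\lambda\da_{\s_{n-2,2}})=3$, and the top removable node is indeed the top normal $i$-node $A$), but the core steps do not go through.

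\textbf{The refinement of Lemma~\ref{l13} does not work as a submodule argument.} Lemma~\ref{l11} only produces $e_k(D^{\tilde e_j(\lambda)})\otimes b_{k,j}$ because $D^{\tilde e_j(\lambda)}$ sits in the \emph{socle} of $e_jD^\lambda$. Your extra piece $e_k(D^\nu)\otimes b'_{k,i}$ requires $D^\nu$ to embed into $e_iD^\lambda$, but by Lemma~\hyperref[l39a]{\ref*{l39}\ref*{l39a}} the socle of $e_iD^\lambda$ is the simple module $D^{\tilde e_i(\lambda)}$, and $\nu\neq\tilde e_i(\lambda)$ since $A\neq B$. So $e_k(D^\nu)$ is only a \emph{subquotient} of $e_ke_iD^\lambda$, and Lemma~\ref{l16} no longer applies. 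For the case ``$\tilde e_i(\lambda)$ has a second normal node at $k\neq i$'', the piece $e_k(D^{\tilde e_i(\lambda)})\otimes b_{k,i}$ is already part of the module $B$ in the proof of Lemma~\ref{l13} and contributes exactly the $+1$ you have already counted; no additional dimension is gained. The paper avoids this by using Lemma~\ref{l10}, which gives $e_i^{(2)}(D^\lambda)\otimes(D^{(2)}\oplus D^{(1^2)})$ as a genuine direct \emph{summand}; writing $D^\lambda\da_{\s_{n-2,2}}=(e_i^{(2)}\otimes(\ldots))\oplus M$ one has $\dim\End = 2+\dim\End_{\s_{n-2,2}}(M)$, and self-duality forces $\dim\End(M)\geq 2$ whenever $M$ is non-zero and not simple. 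The paper then shows, via a composition-factor count (e.g.\ $[e_le_iD^\lambda:D^{\tilde e_l\tilde e_i(\lambda)}]\geq\epsilon_i(\lambda)\cdot\epsilon_l(\tilde e_i(\lambda))\geq 2$), that $M\da_{\s_{n-2}}$ has at least two composition factors under each failure hypothesis. This multiplicity argument is what actually forces $\tilde e_i(\lambda)$ to be JS, and your submodule bookkeeping cannot detect it.

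\textbf{The $\lambda=\lambda^\Mull$ argument contains a false step.} You assert that Mullineux swaps the top and bottom normal $0$-nodes, i.e.\ $(\lambda\setminus A)^\Mull=\lambda\setminus B=\tilde e_0(\lambda)$. But $(e_0D^\lambda)\otimes\sgn\cong e_0D^\lambda$ only says that the multiset of composition factors of $e_0D^\lambda$ is Mullineux-closed. By Lemmas~\ref{l39} and~\ref{l56}, $[e_0D^\lambda:D^{\tilde e_0(\lambda)}]=\epsilon_0(\lambda)=2$ while $[e_0D^\lambda:D^\nu]=1$; hence $\nu^\Mull\neq\tilde e_0(\lambda)$, since Mullineux-conjugate factors must occur with equal multiplicity. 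The paper instead argues as follows: if $\nu$ is not $p$-regular then $\lambda_1=\lambda_p+1$, contradicting $\lambda=\lambda^\Mull$; if $\nu$ is $p$-regular then $\epsilon_0(\nu)=0$, so $e_lD^\nu\neq0$ for some $l\neq0$, hence $e_le_0D^\lambda\neq0$, and by Mullineux symmetry also $e_{-l}e_0D^\lambda\neq0$ with $-l\neq l$, giving two distinct non-zero block components of $M\da_{\s_{n-2}}$ and thus $M$ non-simple.
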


\begin{proof}
Notice first that from Lemma \ref{l10}
\[D^\lambda\da_{\s_{n-2,2}}=(e_i^{(2)}(D^\lambda)\otimes (D^{(2)}\oplus D^{(1^2)}))\oplus M\]
for a certain module $M$. Comparing block decompositions of $D^\lambda\da_{\s_{n-2}}$ and $D^\lambda\da_{\s_{n-2,2}}$ we have that
\[M\da_{\s_{n-2}}\cong\bigoplus_{(j,k)\not=(i,i)}e_je_k(D^\lambda).\]
Also from Lemma \ref{l39}
\[\dim\End_{\s_{n-2,2}}(e_i^{(2)}(D^\lambda)\otimes (D^{(2)}\oplus D^{(1^2)}))=\epsilon_i(\lambda)(\epsilon_i(\lambda)-1)=2.\]
Notice that $M$ is self-dual, since it is the sum of certain block components of $D^\lambda\da_{\s_{n-2,2}}$. So, if $M$ is non-zero and not simple, then $\dim\End_{\s_{n-2,2}}(M)\geq 2$ (simple modules of $\s_{n-2,2}$ are also self-dual) and so from Lemma \ref{l53}
\[\dim\End_{\s_{n-2,2}}(D^\lambda\da_{\s_{n-2,2}})\geq 2+2>3=\dim\End_{\s_{n-1}}(D^\lambda\da_{\s_{n-1}})+1,\]
contradicting the assumptions. As all simple $\s_2$-modules are 1-dimensional, $M$ is non-zero and not simple if and only if $M\da_{\s_{n-2,2}}\cong\oplus_{(j,k)\not=(i,i)}e_je_k(D^\lambda)$ is non-zero and not simple. In order to prove the lemma it is then enough to prove that $\oplus_{(j,k)\not=(i,i)}e_je_k(D^\lambda)$ is non-zero and not simple,  when $\lambda$ is not as in the text of the lemma.

First assume that $\tilde{e}_i(\lambda)$ is not a JS-partition. Then, from Lemma \ref{l47}, there exists $l\not=i$ with $\epsilon_l(\lambda_i)\geq 1$. So, from Lemma \ref{l39},
\begin{align*}
[\bigoplus_{(j,k)\not=(i,i)}e_je_k(D^\lambda):D^{\tilde{e}_l\tilde{e}_i(\lambda)}]&\geq[e_i(D^\lambda):D^{\tilde{e}_i(\lambda)}]\cdot [e_l(D^{\tilde{e}_i(\lambda)}):D^{\tilde{e}_l\tilde{e}_i(\lambda)}]\\
&=\epsilon_i(\lambda)\epsilon_l(\tilde{e}_i(\lambda))\\
&\geq 2.
\end{align*}
In particular $\oplus_{(j,k)\not=(i,i)}e_je_k(D^\lambda)$ is non-zero and not simple.

Assume next that $\nu$ is $p$-regular but not a JS-partition. From Lemmas \ref{l39} and \ref{l56} we have that $D^\nu$ is a composition component of $e_i(D^\lambda)$ and that $\epsilon_i(\nu)\leq\epsilon_i(\lambda)-2=0$. So $\sum_{l\not=i}\epsilon_l(\nu)\geq 2$ and then
\begin{align*}
\sum_{l\not=i}[\bigoplus_{(j,k)\not=(i,i)}e_je_k(D^\lambda):D^{\tilde{e}_l(\nu)}]&\geq\sum_{l\not=i}[e_i(D^\lambda):D^\nu]\cdot [e_l(D^\nu):D^{\tilde{e}_l(\nu)}]\\
&\geq\sum_{l\not=i}\epsilon_l(\nu)\\
&\geq 2.
\end{align*}
So also in this case $\oplus_{(j,k)\not=(i,i)}e_je_k(D^\lambda)$ is non-zero and not simple.

Assume now that $\lambda=\lambda^\Mull$. Notice that $\nu=\lambda\setminus A$, where $A$ is the top removable node of $\lambda$. Assume first that $\nu$ is not $p$-regular. Then $\lambda_1=\lambda_p+1$. This contradicts $\lambda=\lambda^\Mull$, by Lemma 2.2 of \cite{bkz}. So we can assume that $\nu$ is $p$-regular. Further from Lemma \ref{l17} we have that $i=0$, so that $\epsilon_0(\nu)=0$. In particular there exist $l\not=0$ such that $e_l(D^\nu)\not=0$. Since $D^\nu$ is a component of $e_0(D^\lambda)$, we then have that $e_le_0(D^\lambda)\not=0$. Since $\lambda=\lambda^\Mull$ we also have that $e_{-l}e_0(D^\lambda)\not=0$. As $l\not=0$ and so $l\not=-l$ as $p\geq 3$ is odd, it follows that $\oplus_{(j,k)\not=(i,i)}e_je_k(D^\lambda)$ is non-zero and not simple.
\end{proof}

\section{Special homomorphisms}

In Lemma \ref{c1'} we found a condition under which there exists an  homomorphism $M^{(n-4,2^2)}\to\End_F(D^\la)$ which does not vanish on $S^{(n-4,2^2)}$. The condition presented there is though not always easy to check. Further the lemma only holds for $n\equiv 0\Md 5$. In this section we will present a different method for proving the existence of such homomorphisms and show that the corresponding condition is satisfied by a large class of partitions.



\begin{lemma}\label{ll12}
Let $p\geq 3$, $n\geq 6$ and $V$ be a $F\s_n$-module. If
\[x_{2^2}=\sum_{g\in \s_{3,3}}\sgn(g)g(2,5)(3,6)g^{-1}\]
and $x_{2^2}V\not=0$ then there exists $\psi:M^{(n-4,2^2)}\to\End_F(V)$ which does not vanish on $S^{(n-4,2^2)}$.
\end{lemma}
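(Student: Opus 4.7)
The plan is to exhibit an explicit homomorphism $\psi:M^{(n-4,2^2)}\to\End_F(V)$ whose value on a generating polytabloid $e_t$ of $S^{(n-4,2^2)}$ is precisely the operator on $V$ given by $x_{2^2}$; the hypothesis $x_{2^2}V\neq0$ then forces $\psi|_{S^{(n-4,2^2)}}\neq0$.

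First I fix a convenient $(n-4,2^2)$-tableau $t$ with first row $1,4,7,8,\ldots,n$, second row $2,5$ and third row $3,6$. Its column stabiliser is $C_t=\s_{\{1,2,3\}}\times\s_{\{4,5,6\}}$, which is exactly the group $\s_{3,3}$ appearing in the definition of $x_{2^2}$, and its row stabiliser is the Young subgroup $R_t=\s_{\{1,4,7,\ldots,n\}}\times\s_{\{2,5\}}\times\s_{\{3,6\}}$ of type $(n-4,2,2)$. Identifying $M^{(n-4,2^2)}$ with $\1\ua^{\s_n}_{R_t}$, Frobenius reciprocity (as in Lemma~\ref{l2}) identifies $\Hom_{\s_n}(M^{(n-4,2^2)},\End_F(V))$ with $\End_{R_t}(V\da_{R_t})$, via $\psi\mapsto\psi(\{t\})$.

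Next I take $\phi\in\End_F(V)$ to be left multiplication by $\sigma:=(2,5)(3,6)$. Since $\sigma$ commutes with each of $(2,5)$, $(3,6)$ and every permutation of the disjoint set $\{1,4,7,\ldots,n\}$, the operator $\phi$ is fixed by the conjugation action of $R_t$ and so corresponds to a homomorphism $\psi:M^{(n-4,2^2)}\to\End_F(V)$. Because $S^{(n-4,2^2)}$ is generated as an $F\s_n$-module by the single polytabloid $e_t=\sum_{g\in C_t}\sgn(g)g\{t\}$, it suffices to show $\psi(e_t)\neq0$. For any $v\in V$,
\[\psi(e_t)(v)=\sum_{g\in C_t}\sgn(g)(g\phi g^{-1})(v)=\sum_{g\in\s_{3,3}}\sgn(g)g(2,5)(3,6)g^{-1}v=x_{2^2}v.\]
If $x_{2^2}V\neq0$, picking $v$ with $x_{2^2}v\neq0$ gives $\psi(e_t)\neq0$, so $\psi$ does not vanish on $S^{(n-4,2^2)}$.

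There is no real technical obstacle here; the whole argument hinges on matching the data in the definition of $x_{2^2}$ with the combinatorics of some tableau of shape $(n-4,2^2)$. The bookkeeping to verify is that $t$ can be chosen so that $C_t=\s_{3,3}$ and so that $\sigma=(2,5)(3,6)$ centralises $R_t$, which is exactly what the row/column choice above arranges.
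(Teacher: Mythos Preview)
Your proof is correct and follows essentially the same approach as the paper: both construct the same homomorphism $\psi$ (sending the tabloid $\{t\}$ to left multiplication by $(2,5)(3,6)$, which is the paper's formula $\psi(v_{\{x,y\},\{z,w\}})(a)=(x,y)(z,w)a$ specialised to $\{x,y\}=\{2,5\}$, $\{z,w\}=\{3,6\}$) and evaluate it on the polytabloid of the same tableau $t$ to obtain $x_{2^2}$. The only difference is cosmetic---you invoke Frobenius reciprocity to justify that $\psi$ is well defined, while the paper writes $\psi$ directly on the full tabloid basis.
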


\begin{proof}
Let $\{v_{\{x,y\},\{z,w\}}|x,y,z,w\in\{1,\ldots,n\}\mbox{ distinct}\}$ be the standard basis of $M^{(n-4,2^2)}$. Define $\psi:M^{(n-4,2^2)}\to\End_F(V)$ through
\[\psi(v_{\{x,y\},\{z,w\}})(a)=(x,y)(z,w)a\]
for each $a\in V$. Let $e$ be the basis element of $S^{(n-4,2^2)}$ corresponding to the tableau
\[\begin{array}{ccccc}
1&4&7&\ldots&n\\
2&5&&&\\
3&6&&&
\end{array}\]
(see \cite[Section 8]{JamesBook} for definition of $e$). Then $\psi(e)(a)=
x_{2^2}a$, from which the lemma follows.
\end{proof}

\begin{lemma}\label{l51}
Let $p=5$, $n\geq 6$ and $\la\vdash n$ be 5-regular with $h(\la),h(\la^\Mull)\geq 3$. Then there exists $\psi:M^{(n-4,2^2)}\to\End_F(D^\lambda)$ which does not vanish on $S^{(n-4,2^2)}$.
\end{lemma}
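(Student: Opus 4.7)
By Lemma~\ref{ll12}, it is enough to prove that $x_{2^2}D^\la\neq 0$. Since $x_{2^2}\in F\s_6\subseteq F\s_n$, this is a statement about $D^\la\da_{\s_6}$, and it suffices to find one composition factor $D^\mu$ ($\mu\vdash 6$ a $5$-regular partition) of $D^\la\da_{\s_6}$ on which $x_{2^2}$ acts nontrivially: indeed, in any filtration $D^\la\da_{\s_6}\supseteq V_1\supseteq V_2$ with $V_1/V_2\cong D^\mu$, the hypothesis $x_{2^2}D^\mu\neq 0$ forces $x_{2^2}V_1\not\subseteq V_2$, whence $x_{2^2}D^\la\neq 0$.

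The finite problem over $\s_6$ I would handle directly. Every summand $g(2,5)(3,6)g^{-1}$ of $x_{2^2}$ has cycle type $(2,2,1,1)$ and is therefore an even permutation, so $x_{2^2}$ acts via the same $F$-linear operator on $V$ and on $V\otimes\sgn$; consequently $x_{2^2}D^\mu\neq 0$ if and only if $x_{2^2}D^{\mu^\Mull}\neq 0$. For each of the finitely many $5$-regular partitions $\mu\vdash 6$ with $h(\mu),h(\mu^\Mull)\geq 3$ the claim $x_{2^2}D^\mu\neq 0$ can then be verified explicitly, e.g.\ by applying $x_{2^2}$ to the image in $D^\mu$ of a polytabloid of suitable shape (with $\mu=(2^3)$, where the Specht module is already irreducible in characteristic~$5$, providing the archetype).

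It remains to produce such a composition factor $D^\mu$ inside $D^\la\da_{\s_6}$. The plan is to argue inductively on $n-6$, descending via successive restrictions $D^\la\da_{\s_{n-1}}$: Lemma~\ref{l45} splits the restriction into block components, Lemmas~\ref{l39} and~\ref{l40} identify their heads as $D^{\tilde{e}_i(\la)}$, and Lemma~\ref{l17} provides the identity $\tilde{e}_i(\la)^\Mull=\tilde{e}_{-i}(\la^\Mull)$, so that the condition $h(\la^\Mull)\geq 3$ can be propagated in parallel with $h(\la)\geq 3$. The main obstacle I anticipate is the borderline case $\la_3=1$ (or $\la^\Mull_3=1$): the unique removable node in row~$3$ would then destroy the height condition when removed, forcing one to pick a different normal node, compatibly for both $\la$ and $\la^\Mull$. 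This would be resolved by a case analysis on the residues of the extreme normal nodes of $\la$, invoking the JS-partition criterion to exclude the pathological configurations.
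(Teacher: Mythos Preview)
Your overall framework is correct and coincides with the paper's: reduce via Lemma~\ref{ll12} to showing $x_{2^2}D^\lambda\neq 0$, then locate a composition factor of $D^\lambda\da_{\s_6}$ on which $x_{2^2}$ acts nontrivially. The divergence is entirely in how that composition factor is produced.

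The paper does not induct. It invokes Theorem~2.8 of \cite{bk5}, which states directly that if $h(\lambda),h(\lambda^\Mull)\geq 3$ then $D^{(4,1^2)}$ or $D^{(3,1^3)}$ already occurs as a composition factor of $D^\lambda\da_{\s_6}$. After that, since $D^{(4,1^2)}\cong S^{(4,1^2)}$ and $D^{(3,1^3)}\cong S^{(3,1^3)}$ in characteristic~$5$, one only has to check that $x_{2^2}$ is nonzero on these two specific Specht modules, which is a short polytabloid computation.

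Your inductive plan, by contrast, is a proposal to reprove that Brundan--Kleshchev statement from scratch, and you have not carried it out: the sentence ``This would be resolved by a case analysis \ldots'' is exactly where the content lies. The obstruction you flag is real and is not confined to the single borderline $\lambda_3=1$. You would need to show that for every $\lambda$ with $h(\lambda),h(\lambda^\Mull)\geq 3$ and $n>6$, some composition factor $D^\nu$ of $D^\lambda\da_{\s_{n-1}}$ again satisfies $h(\nu),h(\nu^\Mull)\geq 3$. Lemma~\ref{l17} gives $(\tilde e_i\lambda)^\Mull=\tilde e_{-i}(\lambda^\Mull)$, so what you actually need is a single residue $i$ with $\epsilon_i(\lambda)>0$ such that removing the bottom normal $i$-node from $\lambda$ \emph{and} the bottom normal $(-i)$-node from $\lambda^\Mull$ simultaneously preserve height $\geq 3$; when both $\lambda$ and $\lambda^\Mull$ have last part equal to~$1$ (and possibly few normal nodes), the compatibility of these two constraints across the Mullineux bijection is not automatic and requires a genuine combinatorial argument. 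As written this is a gap: either cite \cite{bk5} as the paper does, or supply that case analysis in full.
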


\begin{proof}
From Theorem 2.8 of \cite{bk5} we have that $D^{(4,1^2)}$ or $D^{(3,1^3)}$ is a composition factor of $D^\la\da_{\s_6}$. So it is enough to prove that $x_{2^2}D^{(4,1^2)}$ and $x_{2^2}D^{(3,1^3)}$ are non-zero, where $x_{2^2}$ is as in Lemma \ref{ll12}. Notice that $D^{(4,1^2)}\cong S^{(4,1^2)}$ and $D^{(3,1^3)}\cong S^{(3,1^3)}$. Let $\{v_{a,b}\}$, $\{e_{a,b}\}$, $\{v_{a,b,c}\}$ and $\{e_{a,b,c}\}$ be the standard bases of $M^{(4,1^2)}$, $S^{(4,1^2)}$, $M^{(3,1^3)}$ and $S^{(3,1^3)}$ respectively. It can be checked that $x_{2^2}e_{2,4}$ has non-zero coefficient for $v_{2,5}$ and that $x_{2^2}e_{2,3,4}$ has non-zero coefficient on $v_{1,5,6}$ and so the lemma holds.
\end{proof}

\section{Partitions of the form $(a+b,a)$ with $b$ small}\label{s3}

Partitions of the form $(a+b,a)$ with $0\leq b\leq 3$ will play a special role in the proof of Theorem \ref{t3}, since for these partitions Corollary 4.12 of \cite{bk5} (which will be used later in the proof of Theorem \ref{tsns}) does not apply. So we will now study the simple modules labeled by such partitions and their restrictions to certain submodules of $\s_n$ more in details.

\begin{lemma}\label{l27}
Let $p=5$ and $\lambda=(a+b,a)\vdash n$ with $0\leq b\leq 3$. If $k\leq 4$ and $a\geq k$ then $D^\lambda\da_{\s_{n-k,k}}$ is given by
\begin{align*}
D^{(a,a)}\da_{\s_{2a-1}}&\!\cong\! D^{(a,a-1)},\\
D^{(a,a)}\da_{\s_{2a-2,2}}&\!\cong\! (\hspace{-1pt}D^{(a,a-2)}\!\otimes\! D^{(2)}\hspace{-1pt})\!\oplus\!(\hspace{-1pt}D^{(a-1,a-1)}\!\otimes\! D^{(1^2)}\hspace{-1pt}),\\
D^{(a,a)}\da_{\s_{2a-3,3}}&\!\cong\! (\hspace{-1pt}D^{(a,a-3)}\!\otimes\! D^{(3)}\hspace{-1pt})\!\oplus\!(\hspace{-1pt}D^{(a-1,a-2)}\!\otimes\! D^{(2,1)}\hspace{-1pt}),\\
D^{(a,a)}\da_{\s_{2a-4,4}}&\!\cong\! (\hspace{-1pt}D^{(a-1,a-3)}\!\otimes\! D^{(3,1)}\hspace{-1pt})\!\oplus\!(\hspace{-1pt}D^{(a-2,a-2)}\!\otimes\! D^{(2^2)}\hspace{-1pt}),\\
D^{(a+1,a)}\da_{\s_{2a}}&\!\cong\! D^{(a+1,a-1)}\!\oplus\! D^{(a,a)},\\
D^{(a+1,a)}\da_{\s_{2a-1,2}}&\!\cong\! (\hspace{-1pt}D^{(a+1,a-2)}\!\otimes\! D^{(2)}\hspace{-1pt})\!\oplus\!(\hspace{-1pt}D^{(a,a-1)}\!\otimes\! D^{(2)}\hspace{-1pt})\!\oplus\!(\hspace{-1pt}D^{(a,a-1)}\!\otimes\! D^{(1^2)}\hspace{-1pt}),\\
D^{(a+1,a)}\da_{\s_{2a-2,3}}&\!\cong\! (\hspace{-1pt}D^{(a,a-2)}\!\otimes\! D^{(3)}\hspace{-1pt})\!\oplus\!(\hspace{-1pt}D^{(a,a-2)}\!\otimes\! D^{(2,1)}\hspace{-1pt})\!\oplus\!(\hspace{-1pt}D^{(a-1,a-1)}\!\otimes\! D^{(2,1)}\hspace{-1pt}),\\
D^{(a+1,a)}\da_{\s_{2a-3,4}}&\!\cong\! (\hspace{-1pt}D^{(a,a-3)}\!\otimes\! D^{(3,1)}\hspace{-1pt})\!\oplus\!(\hspace{-1pt}D^{(a-1,a-2)}\!\otimes\! D^{(3,1)}\hspace{-1pt})\!\oplus\!(\hspace{-1pt}D^{(a-1,a-2)}\!\otimes\! D^{(2^2)}\hspace{-1pt}),\\
D^{(a+2,a)}\da_{\s_{2a+1}}&\!\cong\! D^{(a+1,a)}\!\oplus\! D^{(a+2,a-1)},\\
D^{(a+2,a)}\da_{\s_{2a,2}}&\!\cong\! (\hspace{-1pt}D^{(a,a)}\!\otimes\! D^{(2)}\hspace{-1pt})\!\oplus\!(\hspace{-1pt}D^{(a+1,a-1)}\!\otimes\! D^{(2)}\hspace{-1pt})\!\oplus\!(\hspace{-1pt}D^{(a+1,a-1)}\!\otimes\! D^{(1^2)}\hspace{-1pt}),\\
D^{(a+2,a)}\da_{\s_{2a-1,3}}&\!\cong\! (\hspace{-1pt}D^{(a,a-1)}\!\otimes\! D^{(3)}\hspace{-1pt})\!\oplus\!(\hspace{-1pt}D^{(a,a-1)}\!\otimes\! D^{(2,1)}\hspace{-1pt})\!\oplus\!(\hspace{-1pt}D^{(a+1,a-2)}\!\otimes\! D^{(2,1)}\hspace{-1pt}),\\
D^{(a+2,a)}\da_{\s_{2a-2,4}}&\!\cong\! (\hspace{-1pt}D^{(a-1,a-1)}\!\otimes\! D^{(3,1)}\hspace{-1pt})\!\oplus\!(\hspace{-1pt}D^{(a,a-2)}\!\otimes\! D^{(3,1)}\hspace{-1pt})\!\oplus\!(\hspace{-1pt}D^{(a,a-2)}\!\otimes\! D^{(2^2)}\hspace{-1pt}),\\
D^{(a+3,a)}\da_{\s_{2a+2}}&\!\cong\! D^{(a+2,a)},\\
D^{(a+3,a)}\da_{\s_{2a+1,2}}&\!\cong\! (\hspace{-1pt}D^{(a+1,a)}\!\otimes\! D^{(2)}\hspace{-1pt})\!\oplus\!(\hspace{-1pt}D^{(a+2,a-1)}\!\otimes\! D^{(1^2)}\hspace{-1pt}),\\
D^{(a+3,a)}\da_{\s_{2a,3}}&\!\cong\! (\hspace{-1pt}D^{(a,a)}\!\otimes\! D^{(3)}\hspace{-1pt})\!\oplus\!(\hspace{-1pt}D^{(a+1,a-1)}\!\otimes\! D^{(2,1)}\hspace{-1pt}),\\
D^{(a+3,a)}\da_{\s_{2a-1,4}}&\!\cong\! (\hspace{-1pt}D^{(a,a-1)}\!\otimes\! D^{(3,1)}\hspace{-1pt})\!\oplus\!(\hspace{-1pt}D^{(a+1,a-2)}\!\otimes\! D^{(2^2)}\hspace{-1pt}).
\end{align*}
\end{lemma}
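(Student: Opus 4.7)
My plan is to compute $D^\lambda\da_{\s_{n-k,k}}$ by combining the branching rules of Lemmas \ref{l45}, \ref{l39} and \ref{l40} with block decomposition on $\s_{n-k,k}$ and Frobenius reciprocity. The key preliminary observation is that for $\lambda=(a+b,a)$ with $0\le b\le 3$ the residues of removable and addable nodes are severely restricted: $\lambda$ has at most two removable and three addable nodes, and for $b\le 3$ these residues are all distinct modulo $5$. A short row-by-row case analysis then shows that $\epsilon_i(\lambda)\le 1$ for every residue $i$, and the same bound holds for the partitions $\tilde e_i(\lambda)$, $\tilde e_i\tilde e_j(\lambda)$, etc.\ that arise under iteration, provided $a\ge k$.

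With these bounds the $k=1$ cases of the lemma are immediate: $D^\lambda\da_{\s_{n-1}}\cong\bigoplus_i e_iD^\lambda$ by Lemma \ref{l45}, and each non-zero summand is the simple module $D^{\tilde e_i(\lambda)}$ by Lemma \ref{l39}. For $k=2,3,4$ I would compute $D^\lambda\da_{\s_{n-k}}$ by iterating the branching rule, and then repackage the composition factors as $\s_{n-k,k}$-modules. For each candidate simple $D^\mu\otimes D^\nu$ of $\s_{n-k,k}$, Lemma \ref{l48} and Lemma \ref{l40} give
\[
\dim\Hom_{\s_{n-k,k}}\bigl(D^\mu\otimes D^\nu,\,D^\lambda\da_{\s_{n-k,k}}\bigr)=\dim\Hom_{\s_n}\bigl((D^\mu\otimes D^\nu)\ua^{\s_n},\,D^\lambda\bigr),
\]
and the right-hand side is computed by identifying $(D^\mu\otimes D^\nu)\ua^{\s_n}$ with an appropriate divided-power module $f_{i_1}^{(r_1)}\cdots f_{i_s}^{(r_s)}D^\mu$ dictated by the content of $\nu$. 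This yields upper bounds on the multiplicities. Matching lower bounds come from Lemmas \ref{l10} and \ref{l11} for $k=2$, which already exhibit $e_i^{(2)}D^\lambda\otimes(D^{(2)}\oplus D^{(1^2)})$ and $e_ie_jD^\lambda\otimes b_{i,j}$ as genuine direct summands; for $k=3,4$ I would iterate restriction through the chain $\s_{n-k,k-1,1}$ to obtain analogous statements.

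Semisimplicity of the whole restriction is then forced by block considerations: the $\s_{n-k}$-factors $D^\mu$ appearing on the right-hand side of the claimed decompositions have pairwise distinct contents, so the claimed summands lie in pairwise distinct blocks of $\s_{n-k,k}$, and on each block the upper and lower bounds above coincide, leaving no room for extensions or additional composition factors. A final dimension check against the known dimensions of two-row simple modules in characteristic $5$ completes the verification.

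The main obstacle is the sheer bookkeeping. There are sixteen decompositions to verify, and for each one must specify the residues, identify the applicable $\tilde e_i$ operations, check distinctness of blocks of $\s_{n-k,k}$, and confirm that the Frobenius-reciprocity calculation saturates the lower bound. The cases $k=3,4$, where the factors on $\s_k$ are $D^{(2,1)}$, $D^{(3,1)}$ and $D^{(2^2)}$, require particular care because $(D^\mu\otimes D^\nu)\ua^{\s_n}$ must be expressed as a divided-power image with the correct combinatorial shape, and the associated condition $a\ge k$ is precisely what is needed to keep $\tilde e_i^r(\lambda)$ well-defined throughout the iteration.
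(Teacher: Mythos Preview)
Your strategy is reasonable in outline, and the preliminary combinatorics is correct: for $\lambda=(a+b,a)$ with $0\le b\le 3$ one has $\epsilon_i(\lambda)\le 1$ for every residue $i$, removing a normal node keeps $b$ in $\{0,1,2,3\}$, and so iterated branching together with the semisimplicity of $F\s_k$ for $k\le 4<p$ already forces $D^\lambda\da_{\s_{n-k,k}}$ to be semisimple with the stated $\s_{n-k}$-composition factors. This is more than the paper does: for $k\le 3$ it simply cites Lemmas~4.1, 4.5 and 4.7 of \cite{bk5}, and only $k=4$ is argued directly.

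The gap is in the step where you determine the $\s_k$-factor attached to each $D^\mu$. The claimed identification of $(D^\mu\otimes D^\nu)\ua^{\s_n}$ with a divided-power module $f_{i_1}^{(r_1)}\cdots f_{i_s}^{(r_s)}D^\mu$ ``dictated by the content of $\nu$'' is not available here. That identification (as in the proof of Lemma~\ref{l10}) requires the boxes added to $\mu$ all to have the \emph{same} residue, so that projection to the block of $D^\lambda$ isolates a single $f_i^{(r)}$; but you have just shown that the boxes separating $\mu$ from $\lambda$ have pairwise \emph{distinct} residues, and in that regime the relevant block component of $(D^\mu\otimes D^\nu)\ua^{\s_n}$ is not a divided power and depends on $\nu$ itself, not merely on its content. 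Concretely, for $k=2$ nothing in your argument distinguishes $D^\mu\otimes D^{(2)}$ from $D^\mu\otimes D^{(1^2)}$ (Lemma~\ref{l11} only produces some unspecified $b_{i,j}$), and a dimension check cannot separate them either. The paper's $k=4$ argument shows what is actually needed: restrict one step further to $\s_{n-4,2,2}$ using the known $k=2$ decompositions, match against the explicit restrictions $D^\nu\da_{\s_{2,2}}$ for $\nu\vdash 4$, and use that only two-row $\nu$ can occur (since $D^\lambda$ is a subquotient of the two-row permutation module $M^\lambda$ and $p>4$). Replacing your Frobenius-reciprocity step by this kind of inductive comparison through $\s_{n-k,k',k''}$ would repair the argument.
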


\begin{proof}
For $k\leq 3$ see Lemmas 4.1, 4.5 and 4.7 of \cite{bk5}. Further if $a\geq 4$, from the same lemmas,
\begin{align*}
D^{(a,a)}\da_{\s_{2a-4,2^2}}\!\cong &(D^{(a-2,a-2)}\!\otimes\! D^{(2)}\!\otimes\! D^{(2)})\!\oplus\!(D^{(a-1,a-3)}\!\otimes\! D^{(2)}\!\otimes\! D^{(2)})\\
&\oplus\!(D^{(a-1,a-3)}\!\otimes\! D^{(1^2)}\!\otimes\! D^{(2)})\!\oplus\!(D^{(a-1,a-3)}\!\otimes\! D^{(2)}\!\otimes\! D^{(1^2)})\\
&\oplus\!(D^{(a-2,a-2)}\!\otimes\! D^{(1^2)}\!\otimes\! D^{(1^2)}),\\
D^{(a+1,a)}\da_{\s_{2a-3,2^2}}\!\cong & (D^{(a-1,a-2)}\!\otimes\! D^{(2)}\!\otimes\! D^{(2)})^2\!\oplus\!(D^{(a,a-3)}\!\otimes\! D^{(1^2)}\!\otimes\! D^{(2)})\\
&\oplus\!(D^{(a,a-3)}\!\otimes\! D^{(2)}\!\otimes\! D^{(2)})\!\oplus\!(D^{(a-1,a-2)}\!\otimes\! D^{(1^2)}\!\otimes\! D^{(2)})\\
&\oplus\!(D^{(a,a-3)}\!\otimes\! D^{(2)}\!\otimes\! D^{(1^2)})\!\oplus\!(D^{(a-1,a-2)}\!\otimes\! D^{(2)}\!\otimes\! D^{(1^2)})\\
&\oplus\!(D^{(a-1,a-2)}\!\otimes\! D^{(1^2)}\!\otimes\! D^{(1^2)}),\\
D^{(a+2,a)}\da_{\s_{2a-2,2^2}}\!\cong & (D^{(a,a-2)}\!\otimes\! D^{(2)}\!\otimes\! D^{(2)})^2\!\oplus\!(D^{(a-1,a-1)}\!\otimes\! D^{(1^2)}\!\otimes\! D^{(2)})\\
&\oplus\!(D^{(a-1,a-1)}\!\otimes\! D^{(2)}\!\otimes\! D^{(2)})\!\oplus\!(D^{(a,a-2)}\!\otimes\! D^{(1^2)}\!\otimes\! D^{(2)})\\
&\oplus\!(D^{(a-1,a-1)}\!\otimes\! D^{(2)}\!\otimes\! D^{(1^2)})\!\oplus\!(D^{(a,a-2)}\!\otimes\! D^{(2)}\!\otimes\! D^{(1^2)})\\
&\oplus\!(D^{(a,a-2)}\!\otimes\! D^{(1^2)}\!\otimes\! D^{(1^2)}),\\
D^{(a+3,a)}\da_{\s_{2a-1,2^2}}\!\cong & (D^{(a+1,a-2)}\!\otimes\! D^{(2)}\!\otimes\! D^{(2)})\!\oplus\!(D^{(a,a-1)}\!\otimes\! D^{(2)}\!\otimes\! D^{(2)})\\
&\oplus\!(D^{(a,a-1)}\!\otimes\! D^{(1^2)}\!\otimes\! D^{(2)})\!\oplus\!(D^{(a,a-1)}\!\otimes\! D^{(2)}\!\otimes\! D^{(1^2)})\\
&\oplus\!(D^{(a+1,a-2)}\!\otimes\! D^{(1^2)}\!\otimes\! D^{(1^2)}).
\end{align*}
The only possible composition factors of $D^\lambda\da_{\s_4}$ are $D^{(4)}$, $D^{(3,1)}$ and $D^{(2^2)}$. So since 
\begin{align*}
D^{(4)}\da_{\s_{2^2}}\!\cong & D^{(2)}\!\otimes \!D^{(2)},\\
D^{(3,1)}\da_{\s_{2^2}}\!\cong &(D^{(2)}\!\otimes\! D^{(2)})\!\oplus\!(D^{(2)}\!\otimes \!D^{(1^2)})\!\oplus\!(D^{(1^2)}\!\otimes \!D^{(2)}),\\
D^{(2^2)}\da_{\s_{2^2}}\!\cong &(D^{(2)}\!\otimes \!D^{(2)})\!\oplus\!(D^{(1^2)}\!\otimes \!D^{(1^2)}),
\end{align*}
the structure of $D^\lambda\da_{\s_{n-4,4}}$ follows.
\end{proof}

\begin{lemma}\label{l49}
Let $p=5$ and $n\equiv \pm 1\Md 5$ with $n\geq 9$. If $\lambda=(a+b,a)\vdash n$ with $0\leq b\leq 3$ then there exists $\psi\in\Hom_{\s_n}(M^{(n-3,1^3)},\End_F(D^\lambda))$ which does not vanish on $S^{(n-3,1^3)}$.
\end{lemma}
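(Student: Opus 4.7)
The plan is to apply Lemma~\ref{c2}, which reduces the existence of the desired $\psi$ to verifying a single numerical inequality among six dimensions of endomorphism rings of restrictions of $D^\lambda$ to the Young subgroups $\s_{n-1}$, $\s_{n-2,2}$, $\s_{n-3,3}$, $\s_{n-2}$, $\s_{n-3,2}$ and $\s_{n-3}$. Since $n\geq 9$ and $0\leq b\leq 3$ force $a\geq 3$, Lemma~\ref{l27} directly supplies the decompositions of $D^\lambda\da_{\s_{n-k,k}}$ for $k=1,2,3$ as direct sums of pairwise non-isomorphic outer tensor products $D^\mu\otimes D^\nu$, with $\mu$ a $5$-regular partition of $n-k$ and $\nu$ an irreducible $\s_k$-module.

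From these decompositions, each of the six dimensions can be read off as follows. Since outer tensor products of simples of $\s_{n-k}$ and $\s_k$ are simple, $\dim\End_{\s_{n-k,k}}(D^\lambda\da_{\s_{n-k,k}})$ is the sum of the squares of the multiplicities of distinct summands. To obtain $D^\lambda\da_{\s_{n-k}}$ I forget the $\s_k$ factor, so each summand $D^\mu\otimes D^\nu$ contributes $(D^\mu)^{\oplus\dim D^\nu}$. To obtain $D^\lambda\da_{\s_{n-3,2}}$ I restrict from $\s_{n-3,3}$ via the inclusion $\s_2\subseteq\s_3$ stabilising a point, using $D^{(3)}\da_{\s_2}\cong D^{(2)}$ and $D^{(2,1)}\da_{\s_2}\cong D^{(2)}\oplus D^{(1^2)}$; the latter holds because $(2,1)$ has two normal nodes (of residues $\pm 1$), and $\s_2$ is semisimple in characteristic~$5$.

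A case-by-case check through $b\in\{0,1,2,3\}$ then confirms the required inequality. In the order in which the dimensions appear in Lemma~\ref{c2}, I expect the six numbers to be $(5,3,2,2,2,1)$ when $b\in\{0,3\}$ and $(13,7,5,3,3,2)$ when $b\in\{1,2\}$, so the inequality $\dim\End_{\s_{n-3}}>2\dim\End_{\s_{n-3,2}}+\dim\End_{\s_{n-2}}-\dim\End_{\s_{n-3,3}}-\dim\End_{\s_{n-2,2}}-\dim\End_{\s_{n-1}}+1$ of Lemma~\ref{c2} becomes $5>4$ in the first pair of subcases and $13>12$ in the second. Both are strict, so Lemma~\ref{c2} supplies the desired $\psi$ in all four cases.

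The main obstacle is organisational rather than mathematical: one must verify that all the partitions $\mu$ occurring as the $\s_{n-k}$-factor are pairwise distinct (so the summands really are non-isomorphic simples) and track multiplicities carefully through the two successive restriction steps to $\s_{n-k}$ and $\s_{n-3,2}$. The hypothesis $a\geq 3$, equivalent to $n\geq 9$ for $b\leq 3$, is precisely what is needed to keep every such $\mu$ a well-defined $5$-regular partition and to prevent distinct summands from accidentally coinciding.
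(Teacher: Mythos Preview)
Your approach is exactly the paper's: invoke Lemma~\ref{c2} and verify its numerical inequality using the explicit restrictions from Lemma~\ref{l27}. Your six-tuples $(5,3,2,2,2,1)$ for $b\in\{0,3\}$ and $(13,7,5,3,3,2)$ for $b\in\{1,2\}$ are correct, and the resulting inequalities $5>4$ and $13>12$ are precisely what the paper's terse ``it can be checked'' is hiding.
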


\begin{proof}
From Lemma \ref{l27} it can be checked that
\begin{align*}
\dim\End_{\s_{n-3}}(\hspace{-1pt}D^\lambda\da_{\s_{n-3}}\hspace{-1pt})\hspace{-1pt}&\!>\!2\dim\End_{\s_{n-3,2}}(\hspace{-1pt}D^\lambda\da_{\s_{n-3,2}}\hspace{-1pt})\!+\!\dim\End_{\s_{n-2}}(\hspace{-1pt}D^\lambda\da_{\s_{n-2}}\hspace{-1pt})\\
&\hspace{12pt}\!\!-\!\dim\End_{\s_{n-3,3}}(\hspace{-1pt}D^\lambda\da_{\s_{n-3,3}}\hspace{-1pt})\!-\!\dim\End_{\s_{n-2,2}}(\hspace{-1pt}D^\lambda\da_{\s_{n-2,2}}\hspace{-1pt})\\
&\hspace{12pt}\!\!-\!\dim\End_{\s_{n-1}}(\hspace{-1pt}D^\lambda\da_{\s_{n-1}}\hspace{-1pt})\!+\!1,
\end{align*}
and so the result holds by Lemma \ref{c2}.
\end{proof}

\begin{lemma}\label{l49'}
Let $p=5$ and $n\equiv 0\Md 5$ with $n\geq 9$. If $\lambda=(a+b,a)\vdash n$ with $0\leq b\leq 3$ then there exists $\psi\in\Hom_{\s_n}(M^{(n-4,2^2)},\End_F(D^\lambda))$ which does not vanish on $S^{(n-4,2^2)}$.
\end{lemma}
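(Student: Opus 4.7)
The plan is to apply Lemma \ref{c1'}: for each $\lambda=(a+b,a)$ with $b\in\{0,1,2,3\}$ we will verify the displayed inequality there, and the existence of $\psi$ will follow immediately.

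First we observe that three of the six endomorphism dimensions appearing in the inequality, namely those for $\s_{n-2,2}$, $\s_{n-3,3}$ and $\s_{n-4,2^2}$, can be read off directly from Lemma \ref{l27} (the last one from the decomposition of $D^\lambda\da_{\s_{n-4,2^2}}$ recorded inside its proof): those restrictions are given explicitly as direct sums of external tensor products of simple modules, and the endomorphism dimension is then $\sum c_i^2$, where the $c_i$ are the multiplicities of the pairwise non-isomorphic simple summands. Next, we identify $\s_{n-2}=\s_{n-2,1^2}$, $\s_{n-3,2}=\s_{n-3,2,1}$ and $\s_{n-4,3}=\s_{n-4,3,1}$ as Young subgroups of $\s_n$ and obtain their decompositions by further restricting the last simple factor in the decompositions already at hand. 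Since $p=5$ the algebras $F\s_3$ and $F\s_4$ are semisimple, and the required branchings
\begin{align*}
D^{(3)}\da_{\s_2}&=D^{(2)}, & D^{(2,1)}\da_{\s_2}&=D^{(2)}\oplus D^{(1^2)},\\
D^{(3,1)}\da_{\s_3}&=D^{(3)}\oplus D^{(2,1)}, & D^{(2^2)}\da_{\s_3}&=D^{(2,1)}
\end{align*}
(together with the fact that both $D^{(2)}$ and $D^{(1^2)}$ restrict to the trivial module of $\s_1\times\s_1$) then yield the three remaining dimensions.

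What is left is a direct case-by-case verification for $b\in\{0,1,2,3\}$. Under our hypotheses one always has $a\geq 4$, so that the $k=4$ case of Lemma \ref{l27} is available throughout. I expect that for $b\in\{0,3\}$ the inequality of Lemma \ref{c1'} will reduce to $5>4$ and for $b\in\{1,2\}$ to $10>9$, both true. The one point that will require care is the bookkeeping of which simple summands coincide after the two-step restriction, since this controls the multiplicities entering each $\sum c_i^2$; this should be tractable because the $\s_{n-k}$-part of each external product is itself simple, so two such products $D^\mu\otimes D^\nu$ agree only if both factor labels match.
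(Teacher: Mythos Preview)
Your proposal is correct and follows exactly the route the paper takes: verify the inequality of Lemma~\ref{c1'} for each $b\in\{0,1,2,3\}$ using the explicit decompositions supplied by Lemma~\ref{l27} (and its proof for the $\s_{n-4,2^2}$ restriction), and conclude. Your predicted numerics $5>4$ for $b\in\{0,3\}$ and $10>9$ for $b\in\{1,2\}$ are correct, and your observation that $a\geq 4$ under the hypotheses is what ensures the $k=4$ part of Lemma~\ref{l27} applies throughout.
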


\begin{proof}
From Lemma \ref{l27} it can be checked that
\begin{align*}
\dim\End_{\s_{n-4,2^2}}\hspace{-1pt}(\hspace{-1pt}D^\lambda\da_{\s_{n-4,2^2}}\hspace{-1pt})\!>&\dim\End_{\s_{n-4,3}}\hspace{-1pt}(\hspace{-1pt}D^\lambda\da_{\s_{n-4,3}}\hspace{-1pt})\!\hspace{-0.5pt}+\!\dim\End_{\s_{n-3,2}}\hspace{-1pt}(\hspace{-1pt}D^\lambda\da_{\s_{n-3,2}}\hspace{-1pt})\!\hspace{-0.5pt}\\
&+\!\dim\End_{\s_{n-2,2}}\hspace{-1pt}(\hspace{-1pt}D^\lambda\da_{\s_{n-2,2}}\hspace{-1pt})\!\hspace{-0.5pt}-\!\dim\End_{\s_{n-3,3}}\hspace{-1pt}(\hspace{-1pt}D^\lambda\da_{\s_{n-3,3}}\hspace{-1pt})\!\hspace{-0.5pt}\\
&-\!\dim\End_{\s_{n-2}}\hspace{-1pt}(\hspace{-1pt}D^\lambda\da_{\s_{n-2}}\hspace{-1pt})
\end{align*}
and so the result holds by Lemma \ref{c1'}.
\end{proof}

\section{Mullineux fixed JS-partitions}\label{s4}

Mullineux fixed JS-partitions also play a special role in the proof of Theorem \ref{t3} and so will be studied in this section.

\begin{lemma}\label{l23}
Let $p\geq 3$ and $\lambda=\lambda^\Mull\vdash n$ be a JS-partition. Then $n\equiv h(\lambda)^2\Md p$.
\end{lemma}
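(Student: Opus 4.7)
The plan is to pin down the unique normal node of $\la$, use Mullineux fixedness to force its residue to be $0$, and then derive $n\equiv h(\la)^2\Md p$ by a telescoping computation from the JS relations.

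The first step identifies the normal node. I claim that for any JS-partition $\la=(a_1^{b_1},\ldots,a_h^{b_h})$, the top removable node $R_1=(b_1,a_1)$ is the (unique) normal node. Its residue is $\rho_1:=a_1-b_1\Md p$, and the only addable node of $\la$ in a row $\le b_1$ is $A_0=(1,a_1+1)$, whose residue is $a_1\equiv\rho_1+b_1\Md p$; since $1\le b_1\le p-1$ this is distinct from $\rho_1$. Hence $R_1$ is the topmost node (addable or removable) of residue $\rho_1$ in $\la$, so in the $\rho_1$-signature of $\la$ read top-to-bottom it contributes the leading ``$-$'' with no ``$+$'' above it, and cannot be cancelled. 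Since $\la$ is JS, it has exactly one normal node, which must therefore be $R_1$.

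Applying Lemma \ref{l17} to $\la=\la^\Mull$ then gives $\epsilon_{\rho_1}(\la)=\epsilon_{-\rho_1}(\la)$; if $\rho_1\not\equiv-\rho_1\Md p$ this would produce a second normal node of residue $-\rho_1$, contradicting JS. Since $p$ is odd, $\rho_1\equiv 0\Md p$, i.e.\ $a_1\equiv b_1\Md p$.

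Setting $B_i:=b_1+\cdots+b_i$ (so $B_0=0$ and $B_h=h(\la)$), the JS relations $a_i-a_{i+1}+b_i+b_{i+1}\equiv 0\Md p$ telescope to $a_i\equiv a_1+B_{i-1}+B_i-b_1\Md p$, which combined with $a_1\equiv b_1\Md p$ gives $a_i\equiv B_{i-1}+B_i\Md p$. Substituting and using $b_i(B_{i-1}+B_i)=(B_i-B_{i-1})(B_i+B_{i-1})=B_i^2-B_{i-1}^2$ yields
\[n=\sum_{i=1}^h a_ib_i\equiv\sum_{i=1}^h(B_i^2-B_{i-1}^2)=B_h^2=h(\la)^2\Md p\]
by a final telescoping. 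The only delicate ingredient is the identification of $R_1$ as the unique normal node in the first step; everything else is a direct computation.
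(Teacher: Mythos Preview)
Your proof is correct and takes a genuinely different, more elementary route than the paper's.

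The paper's argument invokes Theorem 4.1 of \cite{bo}, which describes how Mullineux-fixed JS-partitions behave under successive removal of the $p$-rim: starting from $\la^0=\la$ and peeling off $p$-rims until reaching $\la^k=(1)$, one checks inductively that $|\la^{i-1}|\equiv h(\la^{i-1})^2\Md p$ is preserved at each step by a case analysis coming from \cite{bo}. Your argument instead stays entirely inside the combinatorics already set up in the paper: you identify the unique normal node as the top removable node (a short signature check), use Lemma~\ref{l17} to force its residue to be $0$, and then exploit the explicit JS congruences to telescope $n=\sum a_ib_i$ down to $h(\la)^2$. This avoids the external structural input from \cite{bo} and gives a self-contained computation. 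The paper's approach, on the other hand, makes the inductive structure of Mullineux-fixed JS-partitions visible, which may be conceptually useful elsewhere; but for the bare statement of the lemma your direct method is cleaner.

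One small remark on presentation: the fact that the top removable node of a $p$-regular partition is always normal is standard, so you could shorten the first step to a one-line citation of that fact rather than rederiving it. Either way the logic is sound.
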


\begin{proof}
Let $\lambda^0:=\lambda$ and then define recursively $\lambda^i$ to be obtained from $\lambda^{i-1}$ by removing the $p$-rim. From Theorem 4.1 of \cite{bo} we have that all the partitions $\lambda^i$ are Mullineux fixed JS-partitions. Further if $k$ is maximal such that $\lambda^k\not=()$, then $\lambda^k=(1)$. In particular $|\lambda^k|\equiv h(\lambda^k)^2\Md p$.

Assume that for a certain $1\leq i\leq k$ we have that $|\lambda^i|\equiv h(\lambda^i)^2\Md p$. From Theorem 4.1 of \cite{bo}, one of the following holds:
\begin{enumerate}
\item
$|\lambda^{i-1}|-|\lambda^i|\equiv 2h(\lambda^i)+1\Md p$ and $h(\lambda^{i-1})\equiv h(\lambda^i)+1\Md p$,

\item
$|\lambda^{i-1}|-|\lambda^i|\equiv -2h(\lambda^i)+1\Md p$ and $h(\lambda^{i-1})\equiv -h(\lambda^i)+1\Md p$,

\item
$h(\lambda^i)\equiv 0\Md p$, $|\lambda^{i-1}|-|\lambda^i|\equiv 0\Md p$ and $h(\lambda^{i-1})\equiv 0\Md p$.
\end{enumerate}
Using $|\lambda^i|\equiv h(\lambda^i)^2\Md p$ it follows that in each of the above cases:
\begin{enumerate}
\item
$|\lambda^{i-1}|\equiv |\lambda^i|+2h(\lambda^i)+1\equiv h(\lambda^i)^2+2h(\lambda^i)+1\equiv h(\lambda^{i-1})^2\Md p$,

\item
$|\lambda^{i-1}|\equiv |\lambda^i|-2h(\lambda^i)+1\equiv h(\lambda^i)^2-2h(\lambda^i)+1\equiv h(\lambda^{i-1})^2\Md p$,

\item
$|\lambda^{i-1}|\equiv|\lambda^i|\equiv 0\equiv h(\lambda^{i-1})\Md p$.
\end{enumerate}
The lemma then follows by induction.
\end{proof}

\begin{lemma}\label{l29}
Let $p=5$, $n\geq 5$ and $\lambda=\lambda^\Mull\vdash n$ be a JS-partition. Then there exists $i=\pm 1$ such that the following hold:
\begin{itemize}
\item
$D^\lambda\da_{\s_{n-1}}\cong D^{\tilde{e}_0(\lambda)}$,

\item
$\epsilon_{\pm i}(\tilde{e}_0(\lambda))=1$, $\epsilon_{j}(\tilde{e}_0(\lambda))=0$ for $j\not=\pm i$ and
\[D^\lambda\da_{\s_{n-2,2}}\cong (D^{\tilde{e}_i\tilde{e}_0(\lambda)}\otimes D^{(2)})\oplus (D^{\tilde{e}_{-i}\tilde{e}_0(\lambda)}\otimes D^{(1^2)}),\]

\item
$\epsilon_{-i}(\tilde{e}_i\tilde{e}_0(\lambda)),\epsilon_{2i}(\tilde{e}_i\tilde{e}_0(\lambda))=1$, $\epsilon_j(\tilde{e}_i\tilde{e}_0(\lambda))=0$ for $j\not=-i,2i$. Further $\tilde{e}_{-i}\tilde{e}_i\tilde{e}_0(\lambda)=\tilde{e}_i\tilde{e}_{-i}\tilde{e}_0(\lambda)$ and
\[D^\lambda\da_{\s_{n-3,3}}\cong (D^{\tilde{e}_{2i}\tilde{e}_i\tilde{e}_0(\lambda)}\otimes D^{(3)})\oplus (D^{\tilde{e}_{-i}\tilde{e}_i\tilde{e}_0(\lambda)}\otimes A)\oplus (D^{\tilde{e}_{-2i}\tilde{e}_{-i}\tilde{e}_0(\lambda)}\otimes D^{(1^3)}),\]
with $A\in\{D^{(2,1)},D^{(3)}\oplus D^{(1^3)}\}$.
\end{itemize}
\end{lemma}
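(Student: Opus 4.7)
My plan is to exploit the Mullineux symmetry (Lemma~\ref{l17}), the preservation of normal nodes of distinct residue under removal (Lemma~\ref{l12}), and the \emph{residue locality} observation that the addable/removable structure at any residue $r\not\in\{j,j\pm 1\}$ is unchanged by removal of a $j$-node. \emph{First bullet:} as $\lambda$ is JS it has a unique normal node of some residue $r$, and combining $\lambda=\lambda^\Mull$ with Lemma~\ref{l17} gives $\epsilon_r(\lambda)=\epsilon_{-r}(\lambda)$, so $r\equiv -r\pmod 5$, hence $r=0$; Lemmas~\ref{l45} and~\ref{l39} then yield $D^\lambda\da_{\s_{n-1}}\cong D^{\tilde{e}_0(\lambda)}$.

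\emph{Second bullet.} Set $\mu=\tilde{e}_0(\lambda)$. Lemma~\ref{l17} gives $\mu=\mu^\Mull$ and Lemma~\ref{l47} gives $\epsilon_0(\mu)=0$; residue locality gives $\epsilon_{\pm 2}(\mu)=0$ and $\phi_{\pm 2}(\mu)=\phi_{\pm 2}(\lambda)$, so the normals of $\mu$ lie only at residues $\pm 1$. Mullineux symmetry gives $\epsilon_1(\mu)=\epsilon_{-1}(\mu)$; the lower bound $\epsilon_1(\mu)+\epsilon_{-1}(\mu)\geq 2$ follows from Lemma~\ref{l30} (since $D^\lambda\da_{\s_{n-1}}$ is simple), and the matching upper bound from the conormal identity in Lemma~\ref{l52} combined with $\phi_0(\mu)=\phi_0(\lambda)+1$ (Lemma~\ref{l47}) and Mullineux symmetry pin $\epsilon_{\pm 1}(\mu)=1$ (and $\phi_{\pm 1}(\mu)=\phi_{\pm 1}(\lambda)$). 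Hence $D^\mu\da_{\s_{n-2}}\cong D^{\tilde{e}_1(\mu)}\oplus D^{\tilde{e}_{-1}(\mu)}$ is a sum of two nonisomorphic simples, and $(n-1,n)$ (which commutes with $\s_{n-2}$ and squares to the identity) acts as $\pm 1$ on each isotypic summand. Tensoring with $\sgn_{\s_n}$ preserves $D^\lambda$ but swaps the two simples (Lemma~\ref{l17}) and swaps $D^{(2)}\leftrightarrow D^{(1^2)}$, so the pairing is forced as stated (up to the choice of sign of $i$).

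\emph{Third bullet.} Set $\nu=\tilde{e}_i(\mu)$. Lemma~\ref{l47} gives $\epsilon_i(\nu)=0$, and residue locality combined with Lemma~\ref{l12} gives $\epsilon_{-i}(\nu)=\epsilon_{-i}(\mu)=1$ and $\epsilon_{-2i}(\nu)=\epsilon_{-2i}(\mu)=0$; any new normal of $\nu$ must lie at residue $0$ or $2i$. A direct inspection of the rim-cancellation pattern at these two residues, using $\epsilon_0(\mu)=0$ and the conormal identity from Lemma~\ref{l52}, yields $\epsilon_0(\nu)=0$ and $\epsilon_{2i}(\nu)=1$. The commutation $\tilde{e}_{-i}\tilde{e}_i(\mu)=\tilde{e}_i\tilde{e}_{-i}(\mu)$ follows from Lemma~\ref{l12}: since $i\neq -i$, the normal $\pm i$-nodes of $\mu$ retain their positions after removing a node of the opposite residue, so both orderings remove the same pair of nodes. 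For the $\s_{n-3,3}$-decomposition, $D^\lambda\da_{\s_{n-3}}$ has four composition factors coming from the branchings of $\nu$ and $\nu^\Mull=\tilde{e}_{-i}(\mu)$, and the commutation collapses the middle two into an isotypic block of multiplicity two; block decomposition and sign-twist invariance identify the outer factors as $D^{\tilde{e}_{2i}\tilde{e}_i\tilde{e}_0(\lambda)}\otimes D^{(3)}$ and $D^{\tilde{e}_{-2i}\tilde{e}_{-i}\tilde{e}_0(\lambda)}\otimes D^{(1^3)}$, while the $\s_3$-action on the $2$-dimensional middle block is either the simple $D^{(2,1)}$ or the semisimple $D^{(3)}\oplus D^{(1^3)}$, giving the stated alternative for $A$.

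The hardest step will be the local combinatorial analysis of the rim cancellation at residues $0$ and $2i$ in $\nu$ --- in particular, ruling out a spurious new normal $0$-node and showing that exactly one new normal appears at residue $2i$. This ultimately relies on the rigid structure of Mullineux-fixed JS-partitions, which is accessible via the Bessenrodt--Olsson recursion on the $p$-rim (Theorem~4.1 of \cite{bo}).
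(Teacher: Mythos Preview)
Your approach is genuinely different from the paper's. The paper proceeds by a concrete case analysis on the top of $\lambda$: writing $\lambda=(a_1^{b_1},a_2^{b_2},\ldots)$, it first uses Lemma~1.8 of \cite{ks2} and Lemma~2.2 of \cite{bkz} to force $h(\lambda)\geq 4$, $1\leq b_1\leq 3$ and $a_1\geq a_2+2$, and then splits into the four cases $b_1=3$, $b_1=2$, $b_1=1$ with $a_1\geq a_2+3$, and $b_1=1$ with $a_1=a_2+2$. In each case it writes down $\tilde{e}_i\tilde{e}_0(\lambda)$ explicitly and reads off its two normal nodes. Your residue-locality observation (that removing a $j$-node leaves $\epsilon_r,\phi_r$ unchanged for $r\notin\{j,j\pm1\}$) is a nice shortcut: it gives $\epsilon_{\pm 2}(\tilde{e}_0(\lambda))=0$ immediately, hence $i=\pm 1$, which the paper only obtains as a by-product of the case analysis. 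Your treatment of the first two bullets is correct and arguably cleaner than the paper's (which quotes Proposition~3.6 of \cite{ks2} for the second bullet).

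The third bullet, however, has a genuine gap. With $\nu=\tilde{e}_i\tilde{e}_0(\lambda)$, residue locality and Lemma~\ref{l47} give $\epsilon_i(\nu)=\epsilon_{-2i}(\nu)=0$ and $\epsilon_{-i}(\nu)=1$ as you say, and Lemma~\ref{l18} applied to $\mu=\tilde{e}_0(\lambda)$ shows $\nu$ is not JS, so $\epsilon_0(\nu)+\epsilon_{2i}(\nu)\geq 1$. But your conormal-counting argument (Lemma~\ref{l52} together with Lemma~\ref{l12} for $\phi_0(\nu)\leq\phi_0(\mu)$ and $\phi_{2i}(\nu)\leq\phi_{2i}(\mu)$) only yields $\epsilon_0(\nu)+\epsilon_{2i}(\nu)\leq 2$. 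Nothing in the general machinery separates the residues $0$ and $2i$, nor rules out the value $2$: one genuinely needs to know where the removed $i$-node sits relative to the addable and removable $0$- and $2i$-nodes of $\mu$, and that depends on the shape of the top rows of $\lambda$. This is precisely what the paper's four-case analysis supplies. The Bessenrodt--Olsson $p$-rim recursion you point to describes Mullineux-fixed JS-partitions globally, but it is not the tool the paper uses and it is not obvious how to extract the local normal-node information of $\tilde{e}_i\tilde{e}_0(\lambda)$ from it more easily than by the direct inspection the paper performs. You should replace the hand-wave at this step with the explicit case analysis on $(b_1,a_1-a_2)$.
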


\begin{proof}
Notice that from Lemma \ref{l17} the unique normal node of $\lambda$ has residue 0. So from Lemmas \ref{l45} and \ref{l39}, $D^\lambda\da_{\s_{n-1}}\cong D^{\tilde{e}_0(\lambda)}$. From Proposition 3.6 of \cite{ks2} we also have that
\[D^{\tilde{e}_0(\lambda)}\da_{\s_{n-2}}\cong D^{\lambda}\da_{\s_{n-2}}\cong D^\nu\oplus D^{\nu^\Mull}\]
with $\nu\not=\nu^\Mull$. From Lemmas \ref{l45} and \ref{l39} it then follows that there exist $i\not=k$ with $\epsilon_i(\tilde{e}_0(\lambda)),\epsilon_k(\tilde{e}_0(\lambda))=1$ and $\epsilon_j(\tilde{e}_0(\lambda))=0$ else. From Lemma \ref{l17} we have that $\tilde{e}_0(\lambda)=\tilde{e}_0(\lambda)^\Mull$ and then $k=-i\not=0$.

Let $i$ be the residue of the top removable node of $\tilde{e}_0(\lambda)$, which is always a normal node. We will prove that $i=\pm1$, that $\epsilon_{-i}(\tilde{e}_i\tilde{e}_0(\lambda)),\epsilon_{2i}(\tilde{e}_i\tilde{e}_0(\lambda))=1$ and that $\epsilon_j(\tilde{e}_i\tilde{e}_0(\lambda))=0$ else. Further we will prove that $\tilde{e}_{-i}\tilde{e}_i\tilde{e}_0(\lambda)=\tilde{e}_i\tilde{e}_{-i}\tilde{e}_0(\lambda)$. In view of Lemmas \ref{l45}, \ref{l39} and \ref{l17} it will follow that
\[D^\la\da_{\s_{n-3}}\cong D^{\tilde{e}_{2i}\tilde{e}_i\tilde{e}_0(\la)}\oplus (D^{\tilde{e}_{-i}\tilde{e}_i\tilde{e}_0(\la)})^2\oplus D^{\tilde{e}_{-2i}\tilde{e}_{-i}\tilde{e}_0(\la)}\]
and so the lemma will follow (up to exchange of $i$ and $-i$) by comparing $D^\lambda\da_{\s_{n-2,2}}\da_{\s_{n-3,1,2}}$ and $D^\lambda\da_{\s_{n-3,3}}\da_{\s_{n-3,1,2}}$.

Assume that $\epsilon_{-i}(\tilde{e}_i\tilde{e}_0(\lambda))=1$. Then $\epsilon_i(\tilde{e}_{-i}\tilde{e}_0(\lambda))=1$ by Lemma \ref{l17}. Let $A$ and $B$ be the normal node of $\tilde{e}_0(\lambda)$ of residue $i$ and $-i$ respectively. Then, from Lemma \ref{l12}, $A$ is normal in $\tilde{e}_{-i}\tilde{e}_0(\lambda)$ of residue $i$ and $B$ is normal in $\tilde{e}_i\tilde{e}_0(\lambda)$ of residue $-i$. Since $\epsilon_{\pm i}(\tilde{e}_0(\lambda)),\epsilon_{\mp i}(\tilde{e}_{\pm i}\tilde{e}_0(\lambda))=1$, it follows that
\[\tilde{e}_{-i}\tilde{e}_i\tilde{e}_0(\lambda)=\tilde{e}_{-i}(\tilde{e}_0(\lambda)\setminus A)=\tilde{e}_0(\lambda)\setminus\{A,B\}=\tilde{e}_i(\tilde{e}_0(\lambda)\setminus B)=\tilde{e}_i\tilde{e}_{-i}\tilde{e}_0(\lambda).\]
To prove the lemma it is then enough to prove that $i=\pm1$ and that $\epsilon_{-i}(\tilde{e}_i\tilde{e}_0(\lambda)),\epsilon_{2i}(\tilde{e}_i\tilde{e}_0(\lambda))=1$ and $\epsilon_j(\tilde{e}_i\tilde{e}_0(\lambda))=0$ else. From Lemma 1.8 of \cite{ks2} we have that $h(\lambda)\geq 4$ and then from Lemma 2.2 of \cite{bkz} that $\lambda_1\geq \lambda_4+2$, as otherwise $\lambda^\Mull_1=\lambda_1+\lambda_2+\lambda_3+\lambda_4>\lambda_1$, contradicting $\lambda=\lambda^\Mull$.

Write $\lambda=(a_1^{b_1},\ldots,a_h^{b_h})$ with $a_1>\ldots>a_h\geq 1$ and $1\leq b_j\leq 4$. From the previous part we have that $1\leq b_1\leq 3$ and that $h\geq 2$. Since $\lambda$ is a JS-partition we have from Theorem D of \cite{k2} we have that $b_1+b_2+a_1-a_2\equiv 0\Md 5$. If $a_1-a_2=1$ then we would have that $b_1+b_2=4$, and then $\lambda_1=a_1=a_2+1=\lambda_4$, leading to a contradiction. So $a_1\geq a_2+2$.  From Theorem D of \cite{k2} we also have that $(a_j^{b_j},\ldots,a_h^{b_h})$ is a JS-partition for each $1\leq j\leq h$. In particular if  $\nu=(\psi_1,\ldots,\psi_l,a_j^{b_j},\ldots,a_h^{b_h})$ is 5-regular with $\psi_l>a_j$ for some $1\leq j\leq h$ and some $l\geq 1$, then the only possible normal nodes of $\nu$ are the removable nodes in the first $l$ rows and the node $(l+b_j,a_j)$. This will be used in each of the following cases to find the normal nodes of $\tilde{e}_i\tilde{e}_0(\lambda)$.

Assume first that $b_1=3$. Then, since $\lambda$ is a JS-partition,
\[\begin{tikzpicture}
\draw (1.2,0) node {0};
\draw (1.2,0.4) node {1};
\draw (1.2,0.8) node {2};
\draw (1.6,0.8) node {3};
\draw (0.8,0) node {4};
\draw (-0.8,-1.2) node {3};
\draw (-0.3,-0.4) node {$\not=3$};
\draw (-2.8,-1.6) node {$\vdots$};
\draw (-1.8,-1.6) node {$\iddots$};
\draw (-3.3,-0.2) node {$\lambda=$};
\draw (2,-0.4) node {,};
\draw (-1.4,-1.4) -- (-0.6,-1.4) -- (-0.6,-0.2) -- (1.4,-0.2) -- (1.4,1) -- (-2.8,1) -- (-2.8,-1.4);
\end{tikzpicture}\hspace{12pt}\begin{tikzpicture}
\draw (1.2,0) node {0};
\draw (1.2,0.4) node {1};
\draw (1.2,0.8) node {2};
\draw (1.6,0.8) node {3};
\draw (0.8,0) node {4};
\draw (-0.8,-1.2) node {3};
\draw (-0.3,-0.4) node {$\not=3$};
\draw (-2.8,-1.6) node {$\vdots$};
\draw (-1.8,-1.6) node {$\iddots$};
\draw (-3.6,-0.2) node {$\tilde{e}_0(\lambda)=$};
\draw (2,-0.4) node {.};
\draw (-1.4,-1.4) -- (-0.6,-1.4) -- (-0.6,-0.2) -- (1,-0.2) -- (1,0.2) -- (1.4,0.2) -- (1.4,1) -- (-2.8,1) -- (-2.8,-1.4);
\end{tikzpicture}\]
So in this case $i=1$ and
\[\begin{tikzpicture}
\draw (1.2,0) node {0};
\draw (1.2,0.4) node {1};
\draw (1.2,0.8) node {2};
\draw (1.6,0.8) node {3};
\draw (0.8,0) node {4};
\draw (-0.8,-1.2) node {3};
\draw (-0.3,-0.4) node {$\not=3$};
\draw (-2.8,-1.6) node {$\vdots$};
\draw (-1.8,-1.6) node {$\iddots$};
\draw (-3.8,-0.2) node {$\tilde{e}_i\tilde{e}_0(\lambda)=$};
\draw (2,-0.4) node {,};
\draw (-1.4,-1.4) -- (-0.6,-1.4) -- (-0.6,-0.2) -- (1,-0.2) -- (1,0.6) -- (1.4,0.6) -- (1.4,1) -- (-2.8,1) -- (-2.8,-1.4);
\end{tikzpicture}\]
In particular $\tilde{e}_i\tilde{e}_0(\lambda)=(a_1,(a_1-1)^2,a_2^{b_2},\ldots,a_h^{b_h})$ with $a_1-1>a_2$. Since $(1,a_1)$ and $(3,a_1-1)$ are normal in $\tilde{e}_i\tilde{e}_0(\lambda)$ of residue 2 and 4 respectively while $(3+b_2,a_2)$ is not normal, the lemma follows in this case.

Assume next that $b_1=2$. Then, since $\lambda$ is a JS-partition,
\[\begin{tikzpicture}
\draw (1.2,0) node {0};
\draw (1.2,0.4) node {1};
\draw (1.6,0.4) node {2};
\draw (0.8,0) node {4};
\draw (-0.8,-1.2) node {2};
\draw (-0.3,-0.4) node {$\not=2$};
\draw (-2.8,-1.6) node {$\vdots$};
\draw (-1.8,-1.6) node {$\iddots$};
\draw (-3.3,-0.4) node {$\lambda=$};
\draw (2,-0.6) node {,};
\draw (-1.4,-1.4) -- (-0.6,-1.4) -- (-0.6,-0.2) -- (1.4,-0.2) -- (1.4,0.6) -- (-2.8,0.6) -- (-2.8,-1.4);
\end{tikzpicture}\hspace{12pt}\begin{tikzpicture}
\draw (1.2,0) node {0};
\draw (1.2,0.4) node {1};
\draw (1.6,0.4) node {2};
\draw (0.8,0) node {4};
\draw (-0.8,-1.2) node {2};
\draw (-0.3,-0.4) node {$\not=2$};
\draw (-2.8,-1.6) node {$\vdots$};
\draw (-1.8,-1.6) node {$\iddots$};
\draw (-3.6,-0.4) node {$\tilde{e}_0(\lambda)=$};
\draw (2,-0.6) node {.};
\draw (-1.4,-1.4) -- (-0.6,-1.4) -- (-0.6,-0.2) -- (1,-0.2) -- (1,0.2) -- (1.4,0.2) -- (1.4,0.6) -- (-2.8,0.6) -- (-2.8,-1.4);
\end{tikzpicture}\]
So in this case $i=1$ and
\[\begin{tikzpicture}
\draw (1.2,0) node {0};
\draw (1.2,0.4) node {1};
\draw (1.6,0.4) node {2};
\draw (0.8,0) node {4};
\draw (-0.8,-1.2) node {2};
\draw (-0.3,-0.4) node {$\not=2$};
\draw (-2.8,-1.6) node {$\vdots$};
\draw (-1.8,-1.6) node {$\iddots$};
\draw (-3.8,-0.4) node {$\tilde{e}_i\tilde{e}_0(\lambda)=$};
\draw (2,-0.6) node {,};
\draw (-1.4,-1.4) -- (-0.6,-1.4) -- (-0.6,-0.2) -- (1,-0.2) -- (1,0.6) -- (-2.8,0.6) -- (-2.8,-1.4);
\end{tikzpicture}\]
In particular $\tilde{e}_i\tilde{e}_0(\lambda)=((a_1-1)^2,a_2^{b_2},\ldots,a_h^{b_h})$ with $a_1-1>a_2$. Since $(2,a_1-1)$ and $(2+b_2,a_2)$ are normal in $\tilde{e}_i\tilde{e}_0(\lambda)$ of residue 4 and 2 respectively, the lemma follows in this case.

Assume now that $b_1=1$ and $a_1\geq a_2+3$. Then, being $\lambda$ a JS-partition,
\[\begin{tikzpicture}
\draw (2,0) node {1};
\draw (1.6,0) node {0};
\draw (1.2,0) node {4};
\draw (0.8,0) node {3};
\draw (-0.8,-1.2) node {1};
\draw (-0.3,-0.4) node {$\not=1$};
\draw (-2.8,-1.6) node {$\vdots$};
\draw (-1.8,-1.6) node {$\iddots$};
\draw (-3.3,-0.6) node {$\lambda=$};
\draw (2,-0.8) node {,};
\draw (-1.4,-1.4) -- (-0.6,-1.4) -- (-0.6,-0.2) -- (1.8,-0.2) -- (1.8,0.2) -- (-2.8,0.2) -- (-2.8,-1.4);
\end{tikzpicture}\hspace{12pt}\begin{tikzpicture}
\draw (2,0) node {1};
\draw (1.6,0) node {0};
\draw (1.2,0) node {4};
\draw (0.8,0) node {3};
\draw (-0.8,-1.2) node {1};
\draw (-0.8,-1.2) node {1};
\draw (-0.3,-0.4) node {$\not=1$};
\draw (-2.8,-1.6) node {$\vdots$};
\draw (-1.8,-1.6) node {$\iddots$};
\draw (-3.6,-0.6) node {$\tilde{e}_0(\lambda)=$};
\draw (2,-0.8) node {.};
\draw (-1.4,-1.4) -- (-0.6,-1.4) -- (-0.6,-0.2) -- (1.4,-0.2) -- (1.4,0.2) -- (-2.8,0.2) -- (-2.8,-1.4);
\end{tikzpicture}\]
So in this case $i=4$ and
\[\begin{tikzpicture}
\draw (2,0) node {1};
\draw (1.6,0) node {0};
\draw (1.2,0) node {4};
\draw (0.8,0) node {3};
\draw (-0.8,-1.2) node {1};
\draw (-0.8,-1.2) node {1};
\draw (-0.3,-0.4) node {$\not=1$};
\draw (-2.8,-1.6) node {$\vdots$};
\draw (-1.8,-1.6) node {$\iddots$};
\draw (-3.8,-0.6) node {$\tilde{e}_i\tilde{e}_0(\lambda)=$};
\draw (2,-0.8) node {,};
\draw (-1.4,-1.4) -- (-0.6,-1.4) -- (-0.6,-0.2) -- (1,-0.2) -- (1,0.2) -- (-2.8,0.2) -- (-2.8,-1.4);
\end{tikzpicture}\]
In particular $\tilde{e}_i\tilde{e}_0(\lambda)=(a_1-2,a_2^{b_2},\ldots,a_h^{b_h})$ with $a_1-2>a_2$. Since $(1,a_1-2)$ and $(1+b_2,a_2)$ are normal in $\tilde{e}_i\tilde{e}_0(\lambda)$ of residue 3 and 1 respectively, the lemma follows in this case.

Assume last that $b_1=1$ and $a_1=a_2+2$. Then $b_2=2$ (from $\lambda$ being a JS-partition) and $h\geq 3$ (as $\lambda$ has at least 4 rows). So
\[\begin{tikzpicture}
\draw (1.2,0) node {1};
\draw (0.8,0) node {0};
\draw (0.4,0) node {4};
\draw (0,0) node {3};
\draw (0.4,-0.4) node {3};
\draw (0,-0.4) node {2};
\draw (0,-0.8) node {1};
\draw (-1.6,-2) node {3};
\draw (-1.1,-1.2) node {$\not=3$};
\draw (-3.6,-2.4) node {$\vdots$};
\draw (-2.6,-2.4) node {$\iddots$};
\draw (-4.1,-1) node {$\lambda=$};
\draw (1.1,-1.2) node {,};
\draw (-2.2,-2.2) -- (-1.4,-2.2) --  (-1.4,-1) -- (0.2,-1) -- (0.2,-0.2) -- (1,-0.2) -- (1,0.2) -- (-3.6,0.2) -- (-3.6,-2.2);
\end{tikzpicture}\hspace{12pt}\begin{tikzpicture}
\draw (1.2,0) node {1};
\draw (0.8,0) node {0};
\draw (0.4,0) node {4};
\draw (0,0) node {3};
\draw (0.4,-0.4) node {3};
\draw (0,-0.4) node {2};
\draw (0,-0.8) node {1};
\draw (-1.6,-2) node {3};
\draw (-1.1,-1.2) node {$\not=3$};
\draw (-3.6,-2.4) node {$\vdots$};
\draw (-2.6,-2.4) node {$\iddots$};
\draw (-4.4,-1) node {$\tilde{e}_0(\lambda)=$};
\draw (1.1,-1.2) node {.};
\draw (-2.2,-2.2) -- (-1.4,-2.2) --  (-1.4,-1) -- (0.2,-1) -- (0.2,-0.2) -- (0.6,-0.2) -- (0.6,0.2) -- (-3.6,0.2) -- (-3.6,-2.2);
\end{tikzpicture}\]
In this case $i=4$ and
\[\begin{tikzpicture}
\draw (1.2,0) node {1};
\draw (0.8,0) node {0};
\draw (0.4,0) node {4};
\draw (0,0) node {3};
\draw (0.4,-0.4) node {3};
\draw (0,-0.4) node {2};
\draw (0,-0.8) node {1};
\draw (-1.6,-2) node {3};
\draw (-1.1,-1.2) node {$\not=3$};
\draw (-3.6,-2.4) node {$\vdots$};
\draw (-2.6,-2.4) node {$\iddots$};
\draw (-4.6,-1) node {$\tilde{e}_i\tilde{e}_0(\lambda)=$};
\draw (1.5,-1.2) node {.};
\draw (-2.2,-2.2) -- (-1.4,-2.2) --  (-1.4,-1) -- (0.2,-1) -- (0.2,0.2) -- (-3.6,0.2) -- (-3.6,-2.2);
\end{tikzpicture}\]
Since $\tilde{e}_i\tilde{e}_0(\lambda)=(a_2^3,a_3^{b_3},\ldots,a_h^{b_h})$ and the nodes $(3,a_2)$ and $(3+b_3,a_3)$ are normal in $\tilde{e}_i\tilde{e}_0(\lambda)$ of residue 1 and 3 respectively the lemma follows also in this case.
\end{proof}

\begin{lemma}\label{l50}
Let $p=5$ and $n\equiv \pm 1\Md 5$ with $n\geq 6$. If $\lambda=\lambda^\Mull\vdash n$ then there exists $\psi\in\Hom_{\s_n}(M^{(n-3,1^3)},\End_F(D^\lambda))$ which does not vanish on $S^{(n-3,1^3)}$.
\end{lemma}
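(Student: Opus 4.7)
The plan is to verify the numerical hypothesis of Lemma \ref{c2} by computing each of the six endomorphism-algebra dimensions that appear on its two sides. Since the section is devoted to Mullineux-fixed JS-partitions and Lemma \ref{l29} applies precisely to such $\lambda$, I shall take $\lambda=\lambda^\Mull$ to be a JS-partition and use Lemma \ref{l29} to read off all the branching information required.

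Three of the six dimensions fall out of Lemma \ref{l29} with essentially no work: since $D^\lambda\da_{\s_{n-1}}\cong D^{\tilde{e}_0(\lambda)}$ is irreducible, $\dim\End_{\s_{n-1}}(D^\lambda\da_{\s_{n-1}})=1$; since $D^\lambda\da_{\s_{n-2,2}}$ decomposes as a sum of two non-isomorphic irreducibles, $\dim\End_{\s_{n-2,2}}(D^\lambda\da_{\s_{n-2,2}})=2$; and since the $\s_{n-3,3}$-restriction contains three distinct irreducible summands (with $A$ either irreducible or a direct sum of two), $\dim\End_{\s_{n-3,3}}(D^\lambda\da_{\s_{n-3,3}})\geq 3$.

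For the remaining three dimensions I would combine Lemmas \ref{l45} and \ref{l39} with the normal-node data from Lemma \ref{l29}. The relations $\epsilon_i(\tilde{e}_0(\lambda))=\epsilon_{-i}(\tilde{e}_0(\lambda))=1$ give $D^\lambda\da_{\s_{n-2}}\cong D^{\tilde{e}_i\tilde{e}_0(\lambda)}\oplus D^{\tilde{e}_{-i}\tilde{e}_0(\lambda)}$, so $\dim\End_{\s_{n-2}}=2$. Applying the same analysis to $\tilde{e}_i\tilde{e}_0(\lambda)$ (whose normal nodes have residues $-i$ and $2i$) and using both the Mullineux symmetry and the identity $\tilde{e}_{-i}\tilde{e}_i\tilde{e}_0(\lambda)=\tilde{e}_i\tilde{e}_{-i}\tilde{e}_0(\lambda)$ from Lemma \ref{l29} yields
\[D^\lambda\da_{\s_{n-3}}\cong D^{\tilde{e}_{2i}\tilde{e}_i\tilde{e}_0(\lambda)}\oplus (D^{\tilde{e}_{-i}\tilde{e}_i\tilde{e}_0(\lambda)})^{\oplus 2}\oplus D^{\tilde{e}_{-2i}\tilde{e}_{-i}\tilde{e}_0(\lambda)},\]
hence $\dim\End_{\s_{n-3}}=1+4+1=6$. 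Restricting each of the two summands of $D^\lambda\da_{\s_{n-2,2}}$ further from $\s_{n-2}$ down to $\s_{n-3}$ by the same procedure produces four irreducible $\s_{n-3,2}$-modules, and these are pairwise non-isomorphic because they correspond to removing four distinct multisets of residues from $\lambda$ (and the $\s_2$-tensorands $D^{(2)}$ and $D^{(1^2)}$ are non-isomorphic as well), giving $\dim\End_{\s_{n-3,2}}=4$.

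Plugging these values into Lemma \ref{c2}, the required strict inequality reduces to
\[6>2\cdot 4+2-\dim\End_{\s_{n-3,3}}-2-1+1=8-\dim\End_{\s_{n-3,3}},\]
which holds as soon as $\dim\End_{\s_{n-3,3}}\geq 3$, a bound already established. Hence Lemma \ref{c2} supplies the required $\psi$. Once Lemma \ref{l29} is in hand, no step of the computation is delicate; the only point that warrants a sentence of care is the pairwise non-isomorphism check behind $\dim\End_{\s_{n-3,2}}=4$, but this follows immediately from block separation via the residue multisets of the removed nodes.
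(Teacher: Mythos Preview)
Your proof is correct and follows exactly the route taken in the paper: compute the six endomorphism dimensions from the branching data supplied by Lemma \ref{l29} and verify the inequality of Lemma \ref{c2}. The paper's own proof is just the one-line assertion that Lemma \ref{l29} yields the inequality of Lemma \ref{c2}; you have simply written out the arithmetic that the paper leaves implicit, and your values ($1,2,2,4,6$ and $\dim\End_{\s_{n-3,3}}\geq 3$) are all correct. Your remark that the hypothesis $\lambda=\lambda^\Mull$ should be read together with the section heading (so that $\lambda$ is a JS-partition and Lemma \ref{l29} applies) matches the paper's intent: the only use of Lemma \ref{l50} is in Theorem \ref{tsns}, after Lemma \ref{t4} has already forced $\lambda$ to be JS.
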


\begin{proof}
From Lemma \ref{l29}
\begin{align*}
\dim\End_{\s_{n-3}}(\hspace{-1pt}D^\lambda\da_{\s_{n-3}}\hspace{-1pt})\hspace{-1pt}&\!>\!2\dim\End_{\s_{n-3,2}}(\hspace{-1pt}D^\lambda\da_{\s_{n-3,2}}\hspace{-1pt})\!+\!\dim\End_{\s_{n-2}}(\hspace{-1pt}D^\lambda\da_{\s_{n-2}}\hspace{-1pt})\\
&\hspace{12pt}\!\!-\!\dim\End_{\s_{n-3,3}}(\hspace{-1pt}D^\lambda\da_{\s_{n-3,3}}\hspace{-1pt})\!-\!\dim\End_{\s_{n-2,2}}(\hspace{-1pt}D^\lambda\da_{\s_{n-2,2}}\hspace{-1pt})\\
&\hspace{12pt}\!\!-\!\dim\End_{\s_{n-1}}(\hspace{-1pt}D^\lambda\da_{\s_{n-1}}\hspace{-1pt})\!+\!1,
\end{align*}
and so the result holds by Lemma \ref{c2}.
\end{proof}

\section{Split-non-split case}\label{s1}

In this section we will prove Theorem 1.1 in the case where one of the two irreducible $A_n$-modules $D_1,D_2$ splits when reduced to $A_n$, while the other does not.

\begin{lemma}\label{l14}
Let $p\geq 3$ and $\lambda,\mu\vdash n$ be $p$-regular. If $\lambda=\lambda^\Mull$, $\mu\not=\mu^\Mull$ and $E^\lambda_\pm\otimes E^\mu$ is irreducible then $D^\lambda\otimes D^\mu\cong D^\nu\oplus D^{\nu^\Mull}$ for some $\nu\not=\nu^\Mull$. In particular
\begin{align*}
\dim\Hom_{\s_n}(\End_F(D^\lambda),\End_F(D^\mu))&=2.
\end{align*}
\end{lemma}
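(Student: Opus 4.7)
The plan is to realize $V := D^\lambda \otimes D^\mu$ as an induction from $A_n$ and then apply Clifford theory for the index-two inclusion $A_n \lhd \s_n$. Since $\lambda = \lambda^\Mull$, we have $D^\lambda \da_{A_n} = E^\lambda_+ \oplus E^\lambda_-$, and by Frobenius reciprocity $E^\lambda_+ \ua_{A_n}^{\s_n} \cong D^\lambda$. Combined with the projection formula and $D^\mu \da_{A_n} = E^\mu$ (which holds since $\mu \neq \mu^\Mull$), this yields
\[
V \cong \bigl(E^\lambda_+ \ua_{A_n}^{\s_n}\bigr) \otimes D^\mu \cong \bigl(E^\lambda_+ \otimes E^\mu\bigr)\ua_{A_n}^{\s_n} = M \ua_{A_n}^{\s_n},
\]
where $M := E^\lambda_+ \otimes E^\mu$ is irreducible by hypothesis.

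Clifford theory then says $M \ua_{A_n}^{\s_n}$ is either simple (if $M$ is not isomorphic to its Galois conjugate $E^\lambda_- \otimes E^\mu$) or a direct sum of two non-isomorphic simples of $\s_n$ related by $\sgn$-twist, i.e.\ of the form $D^\nu \oplus D^{\nu^\Mull}$ with $\nu \neq \nu^\Mull$ (if $M$ is Galois-stable). The second alternative is exactly the conclusion we want, so the task reduces to ruling out the first alternative.

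To exclude the simple case I will appeal to the Bessenrodt--Kleshchev classification \cite{bk} cited in the introduction: for $p \neq 2$, no tensor product of two simple $\s_n$-modules both of dimension $> 1$ is simple. The hypothesis $\lambda = \lambda^\Mull$ combined with $p \geq 3$ forces $\lambda \notin \{(n), (n)^\Mull\}$ (the only labels of $1$-dimensional simples, neither of which is Mullineux-fixed when $p$ is odd), so $\dim D^\lambda > 1$; and $\dim D^\mu > 1$ is available from the hypothesis of the ambient Theorem \ref{t3}. So $V = M \ua_{A_n}^{\s_n}$ cannot be simple, and only the direct-sum alternative remains.

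For the ``in particular'' statement, self-duality of $D^\lambda$, $D^\mu$, and hence of $V$ gives
\[
\dim \Hom_{\s_n}\bigl(\End_F(D^\lambda), \End_F(D^\mu)\bigr) = \dim \bigl(D^\lambda \otimes D^\lambda \otimes D^\mu \otimes D^\mu\bigr)^{\s_n} = \dim \End_{\s_n}(V),
\]
and since the two summands of $V \cong D^\nu \oplus D^{\nu^\Mull}$ are non-isomorphic, $\End_{\s_n}(V) \cong F \oplus F$ has dimension $2$. The main obstacle I anticipate is the clean application of Clifford theory in the modular setting --- in particular ensuring that the $\sgn$-twist alternative really delivers a non-Mullineux-fixed $\nu$ --- together with justifying $\dim D^\mu > 1$ from the ambient hypothesis so that the Bessenrodt--Kleshchev exclusion applies.
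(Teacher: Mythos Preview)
Your argument is correct and is precisely the standard Clifford-theoretic proof that Lemma~3.1 of \cite{bk2} (which the paper simply cites) presumably contains: realise $D^\lambda\otimes D^\mu$ as $(E^\lambda_+\otimes E^\mu)\ua^{\s_n}$ via the projection formula, then use that $p\nmid [\s_n:A_n]=2$ so the induced module is either simple or of the form $D^\nu\oplus D^{\nu^\Mull}$ with $\nu\neq\nu^\Mull$, and exclude the former via \cite{bk}.

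Your caveat about $\dim D^\mu>1$ is well taken and worth stating explicitly: the lemma as written is in fact false for $\mu\in\{(n),(n)^\Mull\}$ (then $D^\lambda\otimes D^\mu\cong D^\lambda$ is simple, since $\lambda=\lambda^\Mull$, and the Hom-space has dimension~$1$). This is harmless because every invocation of the lemma in the paper (Lemma~\ref{t4}, Theorem~\ref{tsns}) carries the hypothesis that $E^\mu$ is not $1$-dimensional, but you should note the extra hypothesis rather than appeal to ``the ambient Theorem~\ref{t3}'', since the lemma is not formally nested inside it.
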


\begin{proof}
See Lemma 3.1 of \cite{bk2}.
\end{proof}

This lemma will be used in Lemma \ref{t4} and Theorem \ref{tsns} to restrict the number of tensor products that have to be considered.

\begin{lemma}\label{t4}
Let $p\geq 3$, $n\geq 6$ and $\lambda,\mu\vdash n$ be $p$-regular. If $\lambda=\lambda^\Mull$, $\mu\not=\mu^\Mull$, $E^\lambda_\pm$ and $E^\mu$ are not 1-dimensional and $E^\lambda_\pm\otimes E^\mu$ is irreducible then $\lambda$ is a JS-partition.
\end{lemma}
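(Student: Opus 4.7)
The plan is to prove the contrapositive: assuming $\lambda = \lambda^\Mull$ is not a JS-partition, I will contradict the equality
\[
\dim\Hom_{\s_n}(\End_F(D^\lambda), \End_F(D^\mu)) = 2
\]
supplied by Lemma \ref{l14} by producing a lower bound of at least $3$ on the left-hand side via Lemma \ref{l15}. I apply that lemma with $G = \s_n$, $V = \End_F(D^\lambda)$ (which is self-dual), $W = \End_F(D^\mu)$, and $A$ the set of all $p$-regular partitions of $n$. Only two choices of $\alpha$ will be needed.

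For $\alpha = (n)$, $M^{(n)} = S^{(n)} = \1$ and the map $1 \mapsto \id_{D^\lambda}$ gives an element of $\Hom_{\s_n}(M^{(n)}, \End_F(D^\lambda))$ not vanishing on $S^{(n)}$; similarly for $\mu$, so $m_{(n)}, n_{(n)} \geq 1$. For $\alpha = (n-2, 2)$, Lemma \ref{l1l2} with $k = 2$ yields
\[
m_{(n-2,2)} \geq \dim\End_{\s_{n-2,2}}(D^\lambda\da_{\s_{n-2,2}}) - \dim\End_{\s_{n-1}}(D^\lambda\da_{\s_{n-1}}),
\]
and the analogous bound for $n_{(n-2,2)}$. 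Since $D^\mu$ is not one-dimensional, $\mu \notin \{(n), (n)^\Mull\}$, so Lemma \ref{l30} gives $n_{(n-2,2)} \geq 1$. It therefore suffices to show that $m_{(n-2,2)} \geq 2$ whenever $\lambda$ is not a JS-partition, for then Lemma \ref{l15} will supply $\dim\Hom \geq 1 + 2 \cdot 1 = 3$, contradicting Lemma \ref{l14}.

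To establish $m_{(n-2,2)} \geq 2$, I split on the total number of normal nodes of $\lambda$, which is at least two since $\lambda$ is not JS. If $\lambda$ has at least three normal nodes, the sharper form of Lemma \ref{l20} available for $\lambda = \lambda^\Mull$ gives $m_{(n-2,2)} \geq 3$. If $\lambda$ has exactly two normal nodes, Lemma \ref{l17} combined with $\lambda = \lambda^\Mull$ forces the symmetry $\epsilon_i(\lambda) = \epsilon_{-i}(\lambda)$ for every residue $i$. Since $p \geq 3$ this leaves precisely two configurations: either $\epsilon_0(\lambda) = 2$ with all other $\epsilon_j(\lambda) = 0$, or $\epsilon_i(\lambda) = \epsilon_{-i}(\lambda) = 1$ for a single pair $\{i, -i\}$ with $i \neq 0$. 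In the first configuration Lemma \ref{l22} (applied with $i = 0$) is available, and its conclusion includes $\lambda \neq \lambda^\Mull$; the contrapositive therefore forbids the equality $\dim\End_{\s_{n-2,2}}(D^\lambda\da) = \dim\End_{\s_{n-1}}(D^\lambda\da) + 1$, and combined with Lemma \ref{l30} this yields $m_{(n-2,2)} \geq 2$. In the second configuration the same argument with Lemma \ref{l21} in place of Lemma \ref{l22} delivers $m_{(n-2,2)} \geq 2$.

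The main obstacle is the two-normal-node case, where Lemma \ref{l30} alone only gives $m_{(n-2,2)} \geq 1$: the Mullineux-fixedness of $\lambda$ is crucial for upgrading this inequality through Lemmas \ref{l21}--\ref{l22} in contrapositive form, and the bookkeeping of residue pairs via Lemma \ref{l17} is what makes the enumeration of two-normal-node configurations clean. Once this upgrade is in place, the desired lower bound follows from Lemma \ref{l15} and contradicts Lemma \ref{l14}, forcing $\lambda$ to be a JS-partition.
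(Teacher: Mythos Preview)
Your proof is correct and follows essentially the same approach as the paper: both use Lemma~\ref{l15} with $\alpha=(n)$ and $\alpha=(n-2,2)$, obtain $n_{(n-2,2)}\geq 1$ from Lemma~\ref{l30} applied to $\mu$, and obtain $m_{(n-2,2)}\geq 2$ from Lemmas~\ref{l20}, \ref{l21}, \ref{l22} applied to the Mullineux-fixed non-JS partition $\lambda$, then contradict Lemma~\ref{l14}. Your write-up simply spells out the two-normal-node case split (via Lemma~\ref{l17}) that the paper leaves implicit; one small omission is that when you invoke Lemma~\ref{l30} for $\lambda$ you should note that $\lambda\notin\{(n),(n)^\Mull\}$ (which follows since $E^\lambda_\pm$ is not $1$-dimensional and $\lambda=\lambda^\Mull$).
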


\begin{proof}
From Lemmas \ref{l1l2} and \ref{l30} there then exists $\psi_{\mu}:M^{(n-2,2)}\rightarrow\End_F(D^\lambda)$ which does not vanish on $S^{(n-2,2)}$. If $\lambda$ is not a JS-partition, from Lemma \ref{l1l2} and one of Lemmas \ref{l20}, \ref{l21} and \ref{l22} there exist $\psi_{\lambda,1},\psi_{\lambda,2}:M^{(n-2,2)}\rightarrow\End_F(D^\lambda)$ which are linearly independent when restricted to $S^{(n-2,2)}$.

So from Lemma \ref{l15}, considering $\alpha=(n)$ and $(n-2,2)$,  it follows that
\[\dim\Hom_{\s_n}(\End_F(D^\lambda),\End_F(D^\mu))\geq 3\]
if $\lambda$ is not a JS-partition. The result then follows by Lemma \ref{l14}.
\end{proof}

\begin{theor}\label{tsns}
Let $p=5$. Let $\lambda,\mu$ be 5-regular partitions with $\lambda=\lambda^\Mull$ and $\mu\not=\mu^\Mull$ such that $E^\lambda_\pm$ and $E^\mu$ are not 1-dimensional. If $E^\lambda_\pm\otimes E^\mu$ is irreducible then $\lambda$ is a JS-partition and $\mu\in\{(n-1,1),(n-1,1)^\Mull\}$.
\end{theor}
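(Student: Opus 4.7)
By Lemma~\ref{t4} we already have that $\lambda$ must be a JS-partition, so the real work is to show $\mu\in\{(n-1,1),(n-1,1)^\Mull\}$. The strategy is to invoke Lemma~\ref{l14} (which pins $\dim\Hom_{\s_n}(\End_F(D^\lambda),\End_F(D^\mu))=2$) together with Lemma~\ref{l15} applied to the self-dual modules $V=\End_F(D^\lambda)$ and $W=\End_F(D^\mu)$: assuming $\mu\notin\{(n-1,1),(n-1,1)^\Mull\}$, I would exhibit three distinct partitions $\alpha$ with $m_\alpha(\lambda)n_\alpha(\mu)\geq 1$ in order to force $\sum_\alpha m_\alpha(\lambda)n_\alpha(\mu)\geq 3$, a contradiction.

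The first two contributions come for free. For $\alpha=(n)$, both $m_{(n)}(\lambda)$ and $n_{(n)}(\mu)$ are at least $1$ via the identity endomorphism. For $\alpha=(n-2,2)$, the description in Lemma~\ref{l29} of the restriction of $D^\lambda$ to $\s_{n-2,2}$ for a Mullineux-fixed JS-partition gives $\dim\End_{\s_{n-2,2}}(D^\lambda\da)=2$ and $\dim\End_{\s_{n-1}}(D^\lambda\da)=1$; combined with Lemma~\ref{l1l2} this yields $m_{(n-2,2)}(\lambda)\geq 1$, while Lemma~\ref{l30} together with Lemma~\ref{l1l2} gives $n_{(n-2,2)}(\mu)\geq 1$ for any non-trivial $\mu$. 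Thus the partial sum is already $\geq 2$.

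The third contribution is forced by splitting on $n\bmod 5$. By Lemma~\ref{l23}, $n\equiv h(\lambda)^2\pmod 5$, so $n\in\{0,\pm 1\}\pmod 5$. For $n\equiv\pm 1\pmod 5$ I take $\alpha=(n-3,1^3)$: Lemma~\ref{l50} gives $m_\alpha(\lambda)\geq 1$, and one obtains $n_\alpha(\mu)\geq 1$ from Corollary~4.12 of \cite{bk5} for generic $\mu$ and from Lemma~\ref{l49} for the two-row ``fat'' case $\mu=(a+b,a)$ with $0\leq b\leq 3$. For $n\equiv 0\pmod 5$, since $h(\lambda)\equiv 0\pmod 5$ forces $h(\lambda)\geq 5$, Lemma~\ref{l51} gives $m_{(n-4,2^2)}(\lambda)\geq 1$; on the $\mu$ side I would combine Lemma~\ref{l51} (when $h(\mu),h(\mu^\Mull)\geq 3$) with Lemma~\ref{l49'} (for fat two-row $\mu$) to get $n_{(n-4,2^2)}(\mu)\geq 1$. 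In either case the third contribution pushes the sum to $\geq 3$.

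The main obstacle will be the residual list of $\mu$'s that slip through every hypothesis above, principally two-row partitions $(n-k,k)$ with $k\geq 2$ but $n-2k>3$, whose Mullineux duals can have fewer than three rows so that Lemma~\ref{l51} and the fat-two-row lemmas no longer apply. For these $\mu$ the plan is to use the explicit restriction formulas of Lemma~\ref{l27} (and their extensions for more rows of $\mu$) to compute $\dim\End_{\s_{n-k',k'}}(D^\mu\da)$ for small $k'$ directly, thereby verifying either a positive contribution at a further $\alpha$ or that $\mu$ is forced to coincide with $(n-1,1)$ or $(n-1,1)^\Mull$. Some attention is also needed to the small-$n$ boundary cases excluded by the hypotheses $n\geq 6,9,10$ of Lemmas~\ref{l50}, \ref{l49}, \ref{l51}, \ref{l49'}, which can be treated by inspection.
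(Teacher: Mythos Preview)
Your outline has the right architecture (use Lemma~\ref{l15} to produce three independent contributions and contradict Lemma~\ref{l14}), and the first two contributions at $\alpha=(n)$ and $\alpha=(n-2,2)$ are exactly as in the paper. The problem is your choice of the third $\alpha$.

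You make the third $\alpha$ depend only on $n\bmod 5$, taking $\alpha=(n-3,1^3)$ for $n\equiv\pm1$ and $\alpha=(n-4,2^2)$ for $n\equiv 0$, and then try to cover \emph{all} $\mu$ at that single $\alpha$. This breaks down. In the $n\equiv\pm1$ case you invoke Corollary~4.12 of \cite{bk5} ``for generic $\mu$'', but that corollary produces homomorphisms not vanishing on $S^{(n-j,j)}$ for $j\in\{3,4\}$, not on $S^{(n-3,1^3)}$; nothing in the paper or in \cite{bk5} gives a non-vanishing $S^{(n-3,1^3)}$-map for arbitrary $\mu$. In the $n\equiv 0$ case Lemma~\ref{l51} needs $h(\mu),h(\mu^\Mull)\geq 3$, so any two-row $\mu=(n-k,k)$ with $n-2k>3$ is outside both Lemma~\ref{l51} and Lemma~\ref{l49'}. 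Your ``residual list'' is therefore not a short tail to mop up; it is essentially all non-fat two-row partitions, and Lemma~\ref{l27} (which only treats $0\leq b\leq 3$) will not extend to handle them.

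The fix, and what the paper actually does, is to let the third $\alpha$ depend on $\mu$ rather than on $n$. If $\mu$ and $\mu^\Mull$ are \emph{not} of the form $(n-k,k)$ with $k=1$ or $n-2k\leq 3$, then Corollaries~3.9 and~4.12 of \cite{bk5} (with Lemma~\ref{l1}) give, for some $j\in\{3,4\}$, homomorphisms $M^{(n-j,j)}\to\End_F(D^\lambda)$ and $M^{(n-j,j)}\to\End_F(D^\mu)$ not vanishing on $S^{(n-j,j)}$; here Corollary~3.9 applies to $\lambda$ because $h(\lambda)\geq 4$. Only when $\mu$ (or $\mu^\Mull$) is a fat two-row $(a+b,a)$ with $0\leq b\leq 3$ does one switch to $\alpha=(n-4,2^2)$ or $\alpha=(n-3,1^3)$ according to $n\bmod 5$, using Lemmas~\ref{l51}/\ref{l49'} or \ref{l50}/\ref{l49}. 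With this order of case-splitting the residual list disappears entirely and only $\mu\in\{(n-1,1),(n-1,1)^\Mull\}$ survives.
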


\begin{proof}
For $n\leq 7$ the lemma follows by considering each case separately. So we can assume that $n\geq 8$. By Lemma \ref{t4} we have that $\lambda$ is a JS-partition. So $n\equiv 0,1\mbox{ or }4\Md 5$ by Lemma \ref{l23}. From Lemma 1.8 of \cite{ks2} we have that $h(\la)\geq 4$. Further from Lemmas \ref{l1l2} and \ref{l30} there exist $\psi_{\lambda,2}:M^{(n-2,2)}\to \End_F(D^\lambda)$ and $\psi_{\mu,2}:M^{(n-2,2)}\to \End_F(D^\mu)$ which do not vanish on $S^{(n-2,2)}$.

If $\mu,\mu^\Mull\not=(n-k,k)$ with $k=1$ or $n-2k\leq 3$ then, from Corollaries 3.9 and 4.12 of \cite{bk5} and Lemma \ref{l1}, for some $j\in\{3,4\}$ there exist $\psi_{\lambda,j}:M^{(n-j,j)}\to \End_F(D^\lambda)$ and $\psi_{\mu,j}:M^{(n-j,j)}\to \End_F(D^\mu)$ which do not vanish on $S^{(n-j,j)}$.

If $\mu,\mu^\Mull=(n-k,k)$ with $n-2k\leq 3$ and $n\equiv 0\Md 5$ then there exist $\psi_{\lambda,2^2}:M^{(n-4,2^2)}\to \End_F(D^\lambda)$ and $\psi_{\mu,2^2}:M^{(n-4,2^2)}\to \End_F(D^\mu)$ which do not vanish on $S^{(n-4,2^2)}$ by Lemmas \ref{l51} and \ref{l49'}.

If $\mu,\mu^\Mull=(n-k,k)$ with $n-2k\leq 3$ and $n\equiv \pm 1\Md 5$ then there exist $\psi_{\lambda,1^3}:M^{(n-3,1^3)}\to \End_F(D^\lambda)$ and $\psi_{\mu,1^3}:M^{(n-3,1^3)}\to \End_F(D^\mu)$ which do not vanish on $S^{(n-3,1^3)}$ by Lemmas \ref{l49} and \ref{l50}.

In either of these cases it follows from Lemma \ref{l15}, also considering $\alpha=(n)$, that
\[\dim\Hom_{\s_n}(\End_F(D^\lambda),\End_F(D^\mu))\geq 3.\]
So from Lemma \ref{l14} we have that if $E^\lambda\pm\otimes E^\mu$ is irreducible then $\mu\in\{(n-1,1),(n-1,1)^\Mull\}$. 
\end{proof}

\begin{theor}\label{tsnat}
Let $p\geq 3$ and $\lambda$ be a $p$-regular partitions with $\lambda=\lambda^\Mull$. Then $E^\lambda_\pm\otimes E^{(n-1,1)}$ is irreducible if and only if $n\not\equiv 0\Md p$ and $\lambda$ is a JS-partition. In this case, if $\nu$ is obtained from $\lambda$ by removing the top removable node and adding the bottom addable node, then $E^\lambda_\pm\otimes E^{(n-1,1)}\cong E^\nu$.
\end{theor}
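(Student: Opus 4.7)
The plan is to compute $D^\lambda \otimes D^{(n-1,1)}$ explicitly as an $\s_n$-module and show it decomposes as $D^\nu \oplus D^{\nu^\Mull}$ with $\nu \ne \nu^\Mull$; a dimension comparison upon restriction to $A_n$ will then force $E^\lambda_\pm \otimes E^{(n-1,1)} \cong E^\nu$. Since $\lambda=\lambda^\Mull$ and $\lambda$ is JS, Lemma \ref{l17} tells us the unique normal node of $\lambda$ has residue $0$, so setting $\mu:=\tilde{e}_0(\lambda)$ we have $D^\lambda\da_{\s_{n-1}}\cong D^\mu$. The projection formula combined with Lemma \ref{l45} gives $D^\lambda\otimes M^{(n-1,1)} \cong D^\mu\uparrow^{\s_n} = \bigoplus_i f_i D^\mu$. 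When $p\nmid n$, Lemmas \ref{l25} and \ref{l26} give $M^{(n-1,1)} \cong D^{(n)}\oplus D^{(n-1,1)}$, so $D^\lambda\otimes D^{(n-1,1)}$ is the complement of a single copy of $D^\lambda$ inside $\bigoplus_i f_i D^\mu$.

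The heart of the argument is to show that the three conormal nodes of $\mu$ (there are three by Lemmas \ref{l29} and \ref{l52}) have pairwise distinct residues $0$, $h$, $-h$, each with multiplicity $1$, where $h=h(\lambda)$. By Lemma \ref{l47}, $\phi_0(\mu)=\phi_0(\lambda)+1$, so the crux reduces to $\phi_0(\lambda)=0$. Writing $\lambda=(a_1^{b_1},\ldots,a_h^{b_h})$ and letting $R_i$ denote the removable and $A_{i-1}$ the addable node at the step between the $(i-1)$-st and $i$-th blocks, the JS relation $a_i-a_{i+1}+b_i+b_{i+1}\equiv 0\Md p$ yields $A_{i-1}\equiv R_{i+1}\Md p$ for all intermediate indices, so each such pair cancels as $(+,-)$ in the residue-wise signature matching; the only surviving entries are the top removable $R_1$ (the unique normal node, of residue $0$ by Mullineux symmetry) together with the two addables $A_{h-1}$ (residue $a_h+b_h-h$) and $A_h=(h+1,1)$ (residue $-h$). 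By Lemma \ref{l23}, $n\equiv h^2\Md p$, so $p\nmid n$ forces $h\not\equiv 0\Md p$ and hence $A_h\not\equiv 0\Md p$; the Mullineux symmetry $\phi_i(\lambda)=\phi_{-i}(\lambda)$ then forces the residues of the two conormal nodes to be $\{h,-h\}$ rather than $\{0,0\}$, yielding $\phi_0(\lambda)=0$.

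With $\phi_0(\mu)=1$ established, Lemma \ref{l12} gives $\phi_i(\mu)\leq \phi_i(\lambda)$ for $i\ne 0$, and $\sum_i\phi_i(\mu)=3$ forces $\phi_h(\mu)=\phi_{-h}(\mu)=1$ with all other $\phi_i(\mu)=0$. Hence $\bigoplus_i f_i D^\mu\cong D^\lambda\oplus D^{\tilde{f}_h\mu}\oplus D^{\tilde{f}_{-h}\mu}$, and by Lemma \ref{l17} the latter two are Mullineux duals; writing $\nu:=\tilde{f}_{-h}\mu$, we have $D^\lambda\otimes D^{(n-1,1)}\cong D^\nu\oplus D^{\nu^\Mull}$ with $\nu\ne\nu^\Mull$. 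Since the bottom addable $(h+1,1)$ of $\mu$ is the final entry in its signature and thus cannot be cancelled, it is the unique conormal $(-h)$-node of $\mu$; consequently $\nu=\mu\cup\{(h+1,1)\}=(\lambda\setminus R_1)\cup\{(h+1,1)\}$, which is precisely the partition obtained from $\lambda$ by removing the top removable and adding the bottom addable. Restricting to $A_n$ and using $D^\lambda\da_{A_n}=E^\lambda_+\oplus E^\lambda_-$ together with $D^\nu\da_{A_n}\oplus D^{\nu^\Mull}\da_{A_n}=2E^\nu$, a dimension count gives $E^\lambda_\pm\otimes E^{(n-1,1)}\cong E^\nu$.

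For the converse, if $p\mid n$ then Lemma \ref{l23} gives $h\equiv 0\Md p$, so $A_h\equiv 0$ and the same signature analysis forces $\phi_0(\lambda)=2$ and $\phi_0(\mu)=3$; using $[M^{(n-1,1)}:D^{(n)}]=2$ in this case (Lemma \ref{l24}), unwinding $D^\lambda\otimes M^{(n-1,1)}=D^\mu\uparrow^{\s_n}$ yields $[D^\lambda\otimes D^{(n-1,1)}:D^\lambda]\geq 1$, so $E^\lambda_\pm$ appears as a composition factor of $E^\lambda_\pm\otimes E^{(n-1,1)}$ and irreducibility fails on dimension grounds. If $\lambda$ is not JS then $\dim\End_{\s_{n-1}}(D^\lambda\da_{\s_{n-1}})\geq 2$, so Lemma \ref{l2} and Lemma \ref{l15} applied at suitably chosen $\alpha$ produce enough homomorphisms to give $\dim\Hom_{\s_n}(\End_F D^\lambda,\End_F D^{(n-1,1)})\geq 3$, contradicting the irreducibility criterion of Lemma \ref{l14}. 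The main obstacle throughout is the residue-signature bookkeeping in the sufficiency direction that pins down $\phi_0(\lambda)$.
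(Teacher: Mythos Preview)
The paper proves this theorem in one line by citing \cite[Theorem~3.3]{bk2} together with Lemma~\ref{l23}, so your direct computation is a genuinely different route. Your argument for the ``if'' direction is essentially the one underlying \cite{bk2}: compute $D^\lambda\otimes M^{(n-1,1)}\cong (D^{\tilde e_0\lambda})\ua^{\s_n}$ via the projection formula, show that $\mu:=\tilde e_0\lambda$ has exactly three conormal nodes of pairwise distinct residues $0,h(\lambda),-h(\lambda)$, split off the $D^\lambda$ summand, and descend to $A_n$. This is correct, but two points of sloppiness should be fixed. First, you repeatedly cite Lemmas~\ref{l25}, \ref{l26}, \ref{l24} and \ref{l29}, all of which are stated only for $p=5$, whereas the theorem is for every $p\ge 3$; the facts you need --- the structure of $M^{(n-1,1)}$ and that $\mu$ has exactly two normal nodes --- hold for arbitrary odd $p$, the latter by \cite[Proposition~3.6]{ks2} as in the proof of Lemma~\ref{l29}. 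Second, you write $\lambda=(a_1^{b_1},\dots,a_h^{b_h})$, so that $h$ denotes the number of \emph{distinct} parts, and then in the same breath write $A_h=(h+1,1)$ with residue $-h$ and invoke $n\equiv h^2\Md p$ from Lemma~\ref{l23}; in both of the latter, $h$ must mean $h(\lambda)=b_1+\cdots+b_h$, a different quantity.

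The genuine gap is the converse when $\lambda$ is not a JS-partition. Saying ``Lemma~\ref{l2} and Lemma~\ref{l15} applied at suitably chosen $\alpha$'' is not a proof: to reach $\dim\Hom_{\s_n}(\End_F D^\lambda,\End_F D^{(n-1,1)})\ge 3$ you must name the $\alpha$'s and verify the bounds on both sides. Using only $\alpha=(n)$ and $\alpha=(n-1,1)$ yields at most $1+1\cdot 1=2$, and only $1$ when $p\mid n$ since then $(n-1,1)$ is itself JS and contributes nothing on the $D^{(n-1,1)}$ side. The paper carries out precisely this step in Lemma~\ref{t4}, taking $\alpha=(n)$ and $\alpha=(n-2,2)$ and using the sharpened inequalities of Lemmas~\ref{l20}--\ref{l22} (available because $\lambda=\lambda^\Mull$) to get two independent homomorphisms at $(n-2,2)$ on the $\lambda$ side; you should cite Lemma~\ref{t4} directly here rather than gesture at Lemma~\ref{l15}.
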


\begin{proof}
See Theorem 3.3 of \cite{bk2} and Lemma \ref{l23}.
%
%
\end{proof}

\section{Double-split case}\label{s2}

In this section we will prove Theorem 1.1 in the case where both irreducible $A_n$-modules $D_1,D_2$ split when reduced to $A_n$.

\begin{lemma}\label{l7}
Let $\lambda,\mu$ be $p$-regular partitions with $\lambda=\lambda^\Mull$ and $\mu=\mu^\Mull$. Also let $\epsilon_1,\epsilon_2\in\{\pm\}$. If $E^\lambda_{\epsilon_1}\otimes E^\mu_{\epsilon_2}$ is irreducible then
\[\dim\Hom_{A_n}(\Hom_F(E^\lambda_{\epsilon_1},E^\lambda_{\delta_1}),\Hom_F(E^\mu_{\delta_2},E^\mu_{\epsilon_2}))\leq 1\]
for any combination $\delta_1,\delta_2\in\{\pm\}$. In particular
\[\dim\Hom_{A_n}(\Hom_F(E^\lambda_{\epsilon_1},E^\lambda_+\oplus E^\la_-),\Hom_F(E^\mu_+\oplus E^\mu_-,E^\mu_{\epsilon_2}))\leq 4.\]
\end{lemma}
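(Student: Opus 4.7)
The plan is to reduce both Hom spaces to a more digestible form via tensor-Hom adjunction. Recognising $\Hom_F(A,B)\cong A^*\otimes B$, and using that each $E^\lambda_\pm$ and $E^\mu_\pm$ is self-dual in odd characteristic, the standard adjunction
\[
\Hom_{A_n}(A^*\otimes B,\,C^*\otimes D)\cong \Hom_{A_n}(B\otimes C,\,A\otimes D)
\]
identifies the Hom space in the statement with $\Hom_{A_n}(E^\lambda_{\delta_1}\otimes E^\mu_{\delta_2},\,E^\lambda_{\epsilon_1}\otimes E^\mu_{\epsilon_2})$. Write $X_{\delta_1,\delta_2}:=E^\lambda_{\delta_1}\otimes E^\mu_{\delta_2}$ and $L:=X_{\epsilon_1,\epsilon_2}$. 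The problem becomes showing $\dim\Hom_{A_n}(X_{\delta_1,\delta_2},L)\leq 1$ for every $\delta_1,\delta_2\in\{\pm\}$, under the hypothesis that $L$ is irreducible.

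For the two ``diagonal'' choices I would invoke Schur. Conjugation by any $\sigma\in\s_n\setminus A_n$ swaps $E^\lambda_+\leftrightarrow E^\lambda_-$ and $E^\mu_+\leftrightarrow E^\mu_-$, since $\lambda=\lambda^\Mull$ and $\mu=\mu^\Mull$, hence $X_{\delta_1,\delta_2}^\sigma\cong X_{-\delta_1,-\delta_2}$. In particular $L^\sigma\cong X_{-\epsilon_1,-\epsilon_2}$ is also irreducible, and for $(\delta_1,\delta_2)\in\{(\epsilon_1,\epsilon_2),(-\epsilon_1,-\epsilon_2)\}$ the source $X_{\delta_1,\delta_2}$ is an irreducible $A_n$-module, so Schur's lemma yields $\dim\leq 1$ at once.

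The substantive cases are the ``mixed'' ones $(\delta_1,\delta_2)\in\{(\epsilon_1,-\epsilon_2),(-\epsilon_1,\epsilon_2)\}$. Here I would split $N:=D^\lambda\otimes D^\mu$ as an $\s_n$-module using the sgn-twist involutions. Pick $\phi_\lambda\in\End_F(D^\lambda)$ and $\phi_\mu\in\End_F(D^\mu)$ implementing $D^\lambda\otimes\sgn\cong D^\lambda$ and $D^\mu\otimes\sgn\cong D^\mu$, normalised so that $\phi_\lambda^2=\phi_\mu^2=1$. Then $\phi_\lambda\otimes\phi_\mu$ is an honest $\s_n$-endomorphism of $N$ acting by the scalar $\delta_1\delta_2$ on each $X_{\delta_1,\delta_2}$, so its $\pm 1$-eigenspaces form an $\s_n$-decomposition $N=M_+\oplus M_-$ with
\[
M_+\da_{A_n}=L\oplus L^\sigma\qquad\text{and}\qquad M_-\da_{A_n}=X_{\epsilon_1,-\epsilon_2}\oplus X_{-\epsilon_1,\epsilon_2}.
\]
Since $p\neq 2$ restriction reflects semisimplicity across the index-$2$ inclusion $A_n\leq\s_n$, so $M_+$ is semisimple, and a short Clifford-theoretic check identifies $M_+$ with $L\ua^{\s_n}$. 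For any mixed $(\delta_1,\delta_2)$, $X_{\delta_1,\delta_2}$ is an $A_n$-direct summand of $M_-\da$, so Frobenius reciprocity provides
\[
\Hom_{A_n}(X_{\delta_1,\delta_2},L)\hookrightarrow \Hom_{A_n}(M_-\da_{A_n},L)=\Hom_{\s_n}(M_-,L\ua^{\s_n})=\Hom_{\s_n}(M_-,M_+).
\]

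The main obstacle will then be to show that this last Hom space distributes as at most one map per mixed summand. I would argue by Krull--Schmidt: $M_+$ and $M_-$ are disjoint collections of indecomposable direct summands of $N=M_+\oplus M_-$, and combined with the $\sigma$-symmetry interchanging $X_{\epsilon_1,-\epsilon_2}\leftrightarrow X_{-\epsilon_1,\epsilon_2}$ (which forces the two mixed contributions to match up under $\sigma$), two linearly independent quotient maps $X_{\epsilon_1,-\epsilon_2}\twoheadrightarrow L$ would produce an $L\oplus L^\sigma\cong M_+\da$ quotient of $M_-\da$; this quotient would descend (again via $p\neq 2$) to an $\s_n$-quotient $M_-\twoheadrightarrow M_+$, contradicting the Krull--Schmidt disjointness of $M_+$ and $M_-$ as direct summands of $N$. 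The ``in particular'' statement is then immediate, since the big Hom space is a direct sum of the four individual ones indexed by $(\delta_1,\delta_2)\in\{\pm\}^2$.
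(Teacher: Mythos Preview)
Your tensor--Hom reduction is correct and is really the whole argument: using self-duality and adjunction you correctly identify the Hom space in the statement with $\Hom_{A_n}(X_{\delta_1,\delta_2},L)$, where $X_{\delta_1,\delta_2}=E^\lambda_{\delta_1}\otimes E^\mu_{\delta_2}$ and $L=X_{\epsilon_1,\epsilon_2}$ is irreducible by hypothesis. From this point on, though, you make things much harder than necessary, and your final step does not go through. The claimed ``Krull--Schmidt disjointness of $M_+$ and $M_-$'' is not a consequence of Krull--Schmidt: nothing prevents the two eigenspaces of an involution from being isomorphic $\s_n$-modules (e.g.\ if $\phi_\lambda\otimes\phi_\mu$ interchanges two isomorphic summands $V\oplus V$, then $M_+\cong V\cong M_-$). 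Moreover, your construction of a surjection $M_-\da_{A_n}\twoheadrightarrow L\oplus L^\sigma$ uses only \emph{one} map $X_{\epsilon_1,-\epsilon_2}\to L$ together with its $\sigma$-conjugate, so the assumed existence of a second independent map is never actually invoked.

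The missing observation, which replaces your entire $M_\pm$ apparatus and works uniformly for all four $(\delta_1,\delta_2)$, is just a dimension count. Since $E^\lambda_+$ and $E^\lambda_-$ (and likewise $E^\mu_+$ and $E^\mu_-$) are conjugate under any $\sigma\in\s_n\setminus A_n$, they have equal dimension; hence $\dim X_{\delta_1,\delta_2}=\dim L$ for every $(\delta_1,\delta_2)$. As $L$ is irreducible, $\dim\Hom_{A_n}(X_{\delta_1,\delta_2},L)$ is the multiplicity of $L$ in the head of $X_{\delta_1,\delta_2}$, which is bounded by $\dim X_{\delta_1,\delta_2}/\dim L=1$. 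Summing over the four choices gives the ``in particular'' bound. (The paper itself proves this lemma only by citation to Lemma~3.4 of \cite{bk2}, whose argument runs along these lines.)
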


\begin{proof}
See Lemma 3.4 of \cite{bk2} (and its proof).
\end{proof}

\begin{lemma}\label{l19}
Let $p\geq 3$ and $n\geq 6$. Let $\lambda,\mu$ be $p$-regular partitions with $\lambda=\lambda^\Mull$ and $\mu=\mu^\Mull$. Assume that $E^\lambda_{\epsilon_1}\otimes E^\mu_{\epsilon_2}$ is irreducible for some $\epsilon_1,\epsilon_2\in\{\pm\}$. Then, up to exchange of $\lambda$ and $\mu$,
\begin{align*}
\dim\End_{\s_{n-2,2}}(D^\lambda\da_{\s_{n-2,2}})&=\dim\End_{\s_{n-1}}(D^\lambda\da_{\s_{n-1}})+1,\\
\dim\End_{\s_{n-2,2}}(D^\mu\da_{\s_{n-2,2}})&\leq\dim\End_{\s_{n-1}}(D^\mu\da_{\s_{n-1}})+2.
\end{align*}
\end{lemma}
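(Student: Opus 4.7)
The plan is to sandwich $\dim\Hom_{A_n}(V,W)$ between the upper bound supplied by Lemma~\ref{l7} and a lower bound from Lemma~\ref{l15}, for the test modules $V=\Hom_F(E^\lambda_{\epsilon_1},E^\lambda_+\oplus E^\lambda_-)$ and $W=\Hom_F(E^\mu_+\oplus E^\mu_-,E^\mu_{\epsilon_2})$. The ``in particular'' clause of Lemma~\ref{l7} gives at once $\dim\Hom_{A_n}(V,W)\le 4$.

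For the lower bound I would set
\[a=\dim\End_{\s_{n-2,2}}(D^\lambda\da_{\s_{n-2,2}})-\dim\End_{\s_{n-1}}(D^\lambda\da_{\s_{n-1}}),\quad c_\lambda=\dim\End_{\s_{n-1}}(D^\lambda\da_{\s_{n-1}})-1,\]
with the analogous $b,c_\mu$ for $\mu$, and apply Lemma~\ref{l15} with $G=A_n$ and $\alpha$ ranging through $\{(n),(n-1,1),(n-2,2)\}$; each of these exceeds its Mullineux dual in lex order and so lies in the set $A$ of that lemma. Lemmas~\ref{l6} and~\ref{l2} identify $A_n$-maps $M^\alpha\da_{A_n}\to V^*$ with $\s_n$-maps $M^\alpha\to\End_F(D^\lambda)$, and Lemma~\ref{l1l2} then produces $1,c_\lambda,a$ linearly independent such maps on the $\lambda$-side non-vanishing on the respective Specht modules, with the symmetric counts $1,c_\mu,b$ on the $\mu$-side. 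Lemma~\ref{l15} yields
\[\dim\Hom_{A_n}(V,W)\ge 1+c_\lambda c_\mu+ab,\]
so the key inequality is $c_\lambda c_\mu+ab\le 3$.

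The next step would be a case analysis on the number of normal nodes. Lemmas~\ref{l29}, \ref{l21}, \ref{l22} and \ref{l20} together give, for any $\nu=\nu^\Mull$: if $\nu$ is a JS-partition ($c_\nu=0$) then $a_\nu=1$, while if $\nu$ has at least two normal nodes ($c_\nu\ge 1$) then $a_\nu\ge 2$ (the Mullineux-fixed hypothesis forbidding the contrapositive conclusions of Lemmas~\ref{l21} and~\ref{l22}). Consequently, if both $\lambda$ and $\mu$ had at least two normal nodes, one would have $c_\lambda c_\mu\ge 1$ and $ab\ge 4$, violating the key inequality. Hence, up to exchange of $\lambda$ and $\mu$, $\lambda$ is a JS-partition, $c_\lambda=0$, and $a=1$, which is the first displayed equality.

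The inequality $b\le 2$ will be the main obstacle, since the arithmetic above yields only $b\le 3$. To eliminate the case $b=3$, the plan is to enlarge the range of $\alpha$ in Lemma~\ref{l15} to include $(n-3,3)$. Lemma~\ref{l29} describes $D^\lambda\da_{\s_{n-3,3}}$ for the JS partition $\lambda=\lambda^\Mull$ explicitly, and a computation shows $\dim\End_{\s_{n-3,3}}(D^\lambda\da)>\dim\End_{\s_{n-2,2}}(D^\lambda\da)$, so via Lemma~\ref{l1l2} with $k=3$ the $\lambda$-side contributes at least one additional linearly independent map on $M^{(n-3,3)}$ non-vanishing on $S^{(n-3,3)}$. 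On the $\mu$-side, $b=3$ together with the lower bounds of Lemma~\ref{l13} restricts $\mu=\mu^\Mull$ to a small list of residue configurations (with no more than three normal nodes) from which a parallel positive contribution can be extracted. The resulting strict strengthening pushes the lower bound above $4$, contradicting Lemma~\ref{l7} and completing the proof.
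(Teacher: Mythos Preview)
Your first phase---deducing that one of $\lambda,\mu$ must satisfy $a=1$---is correct, though more elaborate than necessary. The paper does not go through the normal-node case analysis of Lemmas~\ref{l20}--\ref{l22} here at all; it simply uses Lemma~\ref{l30} to obtain $a,b\ge 1$ and observes that $a,b\ge 2$ already gives $1+ab\ge 5>4$ in Lemma~\ref{l15}, contradicting Lemma~\ref{l7}. (Your inclusion of the $(n-1,1)$ term is harmless but superfluous, and you would in any case have to verify $(n-1,1)>(n-1,1)^\Mull$.)

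The genuine problem is your second phase, ruling out $b=3$. Your proposed $\lambda$-side input at $\alpha=(n-3,3)$ rests on Lemma~\ref{l29}, which is stated and proved only for $p=5$; since Lemma~\ref{l19} is for all $p\ge 3$, this leaves a gap. The $\mu$-side sketch (``a small list of residue configurations \dots\ from which a parallel positive contribution can be extracted'') is not an argument either. The paper's device here is quite different and avoids $(n-3,3)$ entirely: it exploits the \emph{sharper} first inequality of Lemma~\ref{l7}, namely that
\[
\dim\Hom_{A_n}\bigl(\Hom_F(E^\lambda_{\epsilon_1},E^\lambda_{\delta_1}),\,\Hom_F(E^\mu_{\delta_2},E^\mu_{\epsilon_2})\bigr)\le 1
\]
for every choice of $\delta_1,\delta_2$. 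Using the filtration $M^{(n-2,2)}\sim S^{(n-2,2)}\,|\,M^{(n-1,1)}$ one sees that the increment $\dim\Hom_{A_n}(M^{(n-2,2)},X)-\dim\Hom_{A_n}(M^{(n-1,1)},X)$ is non-negative for every $X$. Splitting $V^*$ and $W$ into their two $\pm$-summands, the total increments are $a=1$ and $b\ge 3$; hence some $\delta_1$ has increment $\ge 1$ and some $\delta_2$ has increment $\ge 2$. Applying Lemma~\ref{l15} to those single summands (with $\alpha=(n-2,2)$) gives a Hom-space of dimension $\ge 2$, contradicting the bound of~$1$. This pigeonhole step is the missing idea in your proposal and works uniformly for $p\ge 3$.
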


\begin{proof}
Notice first that $(n)>(n)^\Mull$ and $(n-2,2)>(n-2,2)^\Mull$ (this follows from Lemma 1.8 of \cite{ks2}, since if $n=a(p-1)+b$ with $0\leq b<p-1$ then $(n)^\Mull=((a+1)^b,a^{p-1-b})$, so that $(n-2,2)^\Mull\not=(n)$ as $n\geq 6$).

For any $\alpha\vdash n$, from Lemmas \ref{l2} and \ref{l6} we have that
\begin{align*}
\dim\End_{\s_\alpha}(D^\lambda\da_{\s_\alpha})&=\dim\Hom_{\s_n}(M^\alpha,\End_F(D^\lambda))\\
&=\dim\Hom_{A_n}(M^\alpha,\Hom_F(E^\lambda_+\oplus E^\lambda_-,E^\lambda_{\epsilon_1}))
\end{align*}
and similarly for $\mu$.

From Lemma \ref{l30} we have that
\begin{align*}
\dim\End_{\s_{n-2,2}}(D^\lambda\da_{\s_{n-2,2}})&\geq\dim\End_{\s_{n-1}}(D^\lambda\da_{\s_{n-1}})+1,\\
\dim\End_{\s_{n-2,2}}(D^\mu\da_{\s_{n-2,2}})&\geq\dim\End_{\s_{n-1}}(D^\mu\da_{\s_{n-1}})+1.
\end{align*}

Assume first that
\begin{align*}
\dim\End_{\s_{n-2,2}}(D^\lambda\da_{\s_{n-2,2}})&\geq\dim\End_{\s_{n-1}}(D^\lambda\da_{\s_{n-1}})+2,\\
\dim\End_{\s_{n-2,2}}(D^\mu\da_{\s_{n-2,2}})&\geq\dim\End_{\s_{n-1}}(D^\mu\da_{\s_{n-1}})+2.
\end{align*}
Then, from Lemmas \ref{l1} and \ref{l15} we have that
\[\dim\Hom_{A_n}(\Hom_F(E^\lambda_{\epsilon_1},E^\lambda_+\oplus E^\lambda_-),\Hom_F(E^\mu_+\oplus E^\mu_-,E^\lambda_{\epsilon_2}))\geq 1+2\cdot 2=5.\]
This contradicts $E^\lambda_{\epsilon_1}\otimes E^\mu_{\epsilon_2}$ being irreducible, due to Lemma \ref{l7}.

Up to exchange we can then assume that
\begin{align*}
\dim\End_{\s_{n-2,2}}(D^\lambda\da_{\s_{n-2,2}})&=\dim\End_{\s_{n-1}}(D^\lambda\da_{\s_{n-1}})+1,\\
\dim\End_{\s_{n-2,2}}(D^\mu\da_{\s_{n-2,2}})&\geq\dim\End_{\s_{n-1}}(D^\mu\da_{\s_{n-1}})+3.
\end{align*}
Then, from Lemma \ref{l2} and by self-duality of $M^{(n-1,1)}$ and $M^{(n-2,2)}$,
\begin{align*}
&\dim\Hom_{A_n}(\Hom_F(E^\lambda_{\epsilon_1},E^\lambda_+\oplus E^\lambda_-),M^{(n-2,2)})\\
&=\dim\Hom_{A_n}(\Hom_F(E^\lambda_{\epsilon_1},E^\lambda_+\oplus E^\lambda_-),M^{(n-1,1)})+1
\end{align*}
and
\begin{align*}
&\dim\Hom_{A_n}(M^{(n-2,2)},\Hom_F(E^\mu_+\oplus E^\mu_-,E^\mu_{\epsilon_2}))\\
&\geq\dim\Hom_{A_n}(M^{(n-1,1)},\Hom_F(E^\mu_+\oplus E^\mu_-,E^\lambda_{\epsilon_2}))+3.
\end{align*}
In particular there exist $\delta_1,\delta_2\in\{\pm\}$ with
\begin{align*}
&\dim\Hom_{A_n}(\Hom_F(E^\lambda_{\delta_1},E^\lambda_{\epsilon_1}),M^{(n-2,2)})\\
&\geq\dim\Hom_{A_n}(\Hom_F(E^\lambda_{\delta_1},E^\lambda_{\epsilon_1}),M^{(n-1,1)})+1
\end{align*}
and
\begin{align*}
&\dim\Hom_{A_n}(M^{(n-2,2)},\Hom_F(E^\mu_{\delta_2},E^\mu_{\epsilon_2}))\\
&\geq\dim\Hom_{A_n}(M^{(n-1,1)},\Hom_F(E^\mu_{\delta_2},E^\lambda_{\epsilon_2}))+2.
\end{align*}
From Lemmas \ref{l1} and \ref{l15} it then follows that
\[\dim\Hom_{A_n}(\Hom_F(E^\lambda_{\epsilon_1},E^\lambda_{\delta_1}),\Hom_F(E^\mu_{\delta_2},E^\lambda_{\epsilon_2}))\geq 2,\]
again contradicting that $E^\lambda_{\epsilon_1}\otimes E^\mu_{\epsilon_2}$ is irreducible, due to Lemma \ref{l7}.
\end{proof}

\begin{theor}\label{tds}
Let $p=5$. If $\lambda,\mu\vdash n$ are 5-regular partitions with $\lambda=\lambda^\Mull$ and $\mu=\mu^\Mull$ then $E^\lambda_{\epsilon_1}\otimes E^\mu_{\epsilon_2}$ is not irreducible for any choice of $\epsilon_1,\epsilon_2\in\{\pm\}$ unless $n\leq 4$, in which case $E^\lambda_\pm$ and $E^\mu_\pm$ are 1-dimensional.
\end{theor}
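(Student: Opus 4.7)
The plan is to argue by contradiction: suppose $n\geq 5$ and $E^\lambda_{\epsilon_1}\otimes E^\mu_{\epsilon_2}$ is irreducible for some $\epsilon_1,\epsilon_2\in\{\pm\}$. Applying Lemma \ref{l19} and swapping $\lambda,\mu$ if needed, one may assume
\[\dim\End_{\s_{n-2,2}}(D^\lambda\da_{\s_{n-2,2}})=\dim\End_{\s_{n-1}}(D^\lambda\da_{\s_{n-1}})+1,\]
while the analogous difference for $\mu$ is at most $2$.

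The next step is a normal-node analysis. For $\lambda$, Lemma \ref{l20} rules out $\lambda$ having $\geq 3$ normal nodes (as this would force the difference to be $>2$), and Lemmas \ref{l21} and \ref{l22} rule out $\lambda$ having exactly $2$ normal nodes (as either would force $\lambda\not=\lambda^\Mull$). Hence $\lambda$ must be a Mullineux-fixed JS-partition; by Lemma 1.8 of \cite{ks2}, $h(\lambda)\geq 4$, and Lemma \ref{l23} yields $n\equiv h(\lambda)^2\pmod 5\in\{0,1,4\}$. For $\mu$, Lemma \ref{l20} again gives at most $2$ normal nodes; if $\mu$ has exactly $2$ normal nodes, Lemmas \ref{l21} and \ref{l22} force the $\mu$-difference to be exactly $2$.

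The contradiction is then extracted by combining Lemma \ref{l15} with Lemma \ref{l7}: choosing $V$ and $W$ suitably so that via Lemma \ref{l6} one translates the $m_\alpha, n_\alpha$ into data about $\End_F(D^\lambda)$ and $\End_F(D^\mu)$, while Lemma \ref{l7} caps $\dim\Hom_{A_n}(V,W)\leq 4$. I would extract lower bounds from several $\alpha$'s: $\alpha=(n)$ always contributes $1\cdot 1$; $\alpha=(n-2,2)$ contributes $1\cdot 1$ or $1\cdot 2$ depending on whether $\mu$ is JS or has two normal nodes (via Lemma \ref{l1l2}); $\alpha=(n-3,3)$ contributes at least $1$ and equals $2$ precisely when the middle factor $A$ in the explicit decomposition of $D^\lambda\da_{\s_{n-3,3}}$ from Lemma \ref{l29} is $D^{(3)}\oplus D^{(1^3)}$ (and similarly for $\mu$ if JS); finally either $\alpha=(n-3,1^3)$ when $n\equiv\pm 1\pmod 5$ (via Lemma \ref{l50}) or $\alpha=(n-4,2^2)$ when $n\equiv 0\pmod 5$ (via Lemmas \ref{l51} and \ref{l49'}) each contribute at least $1\cdot 1$.

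When $\mu$ has two normal nodes, the $(n-2,2)$-contribution is already $2$, so summing with the contributions from $(n)$, $(n-3,3)$ and the auxiliary $\alpha$ gives at least $5$, contradicting Lemma \ref{l7}. When both $\lambda,\mu$ are JS and either middle factor equals $D^{(3)}\oplus D^{(1^3)}$, the $(n-3,3)$-contribution jumps and again the total exceeds $4$. The hard residual sub-case is both $\lambda$ and $\mu$ Mullineux-fixed JS-partitions whose middle factors are both $D^{(2,1)}$: here the four basic contributions meet the upper bound $4$ exactly. To close this sub-case I would either locate a further $\alpha$ (e.g.\ $(n-4,3,1)$ or $(n-4,4)$) whose homomorphism contributions to both endomorphism rings are non-trivial, or exploit the explicit parametrization $\lambda=(a_1^{b_1},\ldots,a_h^{b_h})$ derived in the proof of Lemma \ref{l29} together with the numerical constraint $n\equiv h(\lambda)^2\equiv h(\mu)^2\pmod 5$ to rule out this configuration combinatorially. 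Closing off this last sub-case is the main obstacle.
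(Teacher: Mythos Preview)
Your overall structure tracks the paper's, but the ``hard residual sub-case'' you identify---both $\lambda$ and $\mu$ Mullineux-fixed JS-partitions with middle factor $D^{(2,1)}$---is exactly where the paper uses an idea you have not found. Rather than continuing to bound
\[
\dim\Hom_{A_n}\bigl(\Hom_F(E^\lambda_{\epsilon_1},E^\lambda_+\oplus E^\lambda_-),\Hom_F(E^\mu_+\oplus E^\mu_-,E^\mu_{\epsilon_2})\bigr)
\]
against the cap $4$ from Lemma~\ref{l7}, the paper switches in this sub-case to the \emph{smaller} modules $V=\End_F(E^\lambda_{\epsilon_1})$ and $W=\End_F(E^\mu_{\epsilon_2})$, for which irreducibility of the tensor product gives the much tighter cap $\dim\Hom_{A_n}(V,W)=\dim\End_{A_n}(E^\lambda_{\epsilon_1}\otimes E^\mu_{\epsilon_2})=1$. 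One then applies Lemma~\ref{l15} with $G=A_n$ to these $V,W$. By Lemma~\ref{l29}, $D^\lambda\da_{\s_{n-2,2}}$ has exactly two simple summands, neither Mullineux-fixed, so $E^\lambda_{\epsilon_1}\da_{A_{n-2,2}}$ is simple; again by Lemma~\ref{l29} (together with Lemma~1.1 of \cite{bk2}) $D^\lambda\da_{\s_{n-3,3}}$ is semisimple with at least three summands, so $E^\lambda_{\epsilon_1}\da_{A_{n-3,3}}$ is semisimple with at least two. Via Lemmas~\ref{lM} and~\ref{l1}, the contributions from $\alpha=(n)$ and $\alpha=(n-3,3)$ alone then give $\dim\End_{A_n}(E^\lambda_{\epsilon_1}\otimes E^\mu_{\epsilon_2})\geq 1+0+1=2$, a contradiction. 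No further $\alpha$ is needed and the ambiguity in the middle factor $A$ becomes irrelevant.

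A secondary point: in the case where $\mu$ has two normal nodes, the paper does not use your auxiliary $\alpha\in\{(n-3,1^3),(n-4,2^2)\}$ at all; instead it takes $\alpha=(n-3,3)$ and $\alpha=(n-4,4)$ and invokes Corollary~3.9 of \cite{bk5} (valid since $h(\lambda),h(\mu)\geq 4$ once $n\geq 12$), obtaining contributions $1+0+2+1+1=5$. This is cleaner because (i) one only needs to check $(n-i,i)>(n-i,i)^\Mull$ for $0\leq i\leq 4$, which the paper does explicitly; (ii) your invocation of Lemma~\ref{l50} for a non-JS $\mu$ is suspect, since its proof goes through Lemma~\ref{l29}, which is stated only for JS-partitions; and (iii) the citation of Lemma~\ref{l49'} is inapposite here, as it concerns two-row partitions rather than Mullineux-fixed ones.
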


\begin{proof}
For $n\leq 11$ the lemma can be proved by considering each case separately. So assume now that $n\geq 12$. If $0\leq i\leq 4$ and $n-i=4a_i+b_i$ with $0\leq b_i\leq 3$, then $(n-i,i)^\Mull=((a_i+1)^{b_i},a_i^{4-b_i},1^i)$ by Lemma 2.3 of \cite{bkz}. In particular $(n-i,i)>(n-i,i)^\Mull$ for $0\leq i\leq 4$.

From Lemma 1.8 of \cite{ks2} we have that $h(\lambda),h(\mu)\geq 4$. So, from Corollary 3.9 of \cite{bk5},
\begin{align*}
\dim\Hom_{\s_n}(M^{(n-3,3)},\End_F(D^\lambda))&>\dim\Hom_{\s_n}(M^{(n-2,2)},\End_F(D^\lambda)),\\
\dim\Hom_{\s_n}(M^{(n-4,4)},\End_F(D^\lambda))&>\dim\Hom_{\s_n}(M^{(n-3,3)},\End_F(D^\lambda)),\\
\dim\Hom_{\s_n}(M^{(n-3,3)},\End_F(D^\mu))&>\dim\Hom_{\s_n}(M^{(n-2,2)},\End_F(D^\mu)),\\
\dim\Hom_{\s_n}(M^{(n-4,4)},\End_F(D^\mu))&>\dim\Hom_{\s_n}(M^{(n-3,3)},\End_F(D^\mu)).
\end{align*}

From Lemma \ref{l19} we can assume that
\[\dim\End_{\s_{n-2,2}}(D^\lambda\da_{\s_{n-2,2}})=\dim\End_{\s_{n-1}}(D^\lambda\da_{\s_{n-1}})+1.\]

Assume first that
\[\dim\End_{\s_{n-2,2}}(D^\mu\da_{\s_{n-2,2}})\geq\dim\End_{\s_{n-1}}(D^\mu\da_{\s_{n-1}})+2.\]
For any $\alpha\vdash n$ from Lemmas \ref{l2} and \ref{l6} we have that
\begin{align*}
\dim\End_{\s_\alpha}(D^\lambda\da_{\s_\alpha})&=\dim\Hom_{A_n}(M^\alpha,\Hom_F(E^\lambda_+\oplus E^\lambda_-,E^\lambda_{\epsilon_1}))\\
\dim\End_{\s_\alpha}(D^\mu\da_{\s_\alpha})&=\dim\Hom_{A_n}(M^\alpha,\Hom_F(E^\mu_+\oplus E^\mu_-,E^\mu_{\epsilon_1})).
\end{align*}
Since $(n-i,i)>(n-i,i)^\Mull$ by Lemmas \ref{l1} and \ref{l15} we have that
\[\dim\Hom_{\s_n}(\Hom_F(E^\lambda_{\epsilon_1},E^\lambda_+\oplus E^\lambda_-),\Hom_F(E^\mu_+\oplus E^\mu_-,E^\mu_{\epsilon_2}))\geq 1+0+2+1+1=5.\]
In particular, from Lemma \ref{l7}, $E^\lambda_\pm\otimes E^\mu_\pm$ is not irreducible.

So we may now assume (from Lemma \ref{l30}) that
\begin{align*}
\dim\End_{\s_{n-2,2}}(D^\lambda\da_{\s_{n-2,2}})&=\dim\End_{\s_{n-1}}(D^\lambda\da_{\s_{n-1}})+1,\\
\dim\End_{\s_{n-2,2}}(D^\mu\da_{\s_{n-2,2}})&=\dim\End_{\s_{n-1}}(D^\mu\da_{\s_{n-1}})+1.
\end{align*}
From Lemmas \ref{l20}, \ref{l21} and \ref{l22} we then have that $\lambda$ and $\mu$ are JS-partitions.

From Lemma \ref{l29} we have that $(E^\lambda_+\oplus E^\lambda_-)\da_{A_{n-2,2}}\cong D^\lambda\da_{A_{n-2,2}}$ has only 2 composition factors (since so does $D^\lambda\da_{\s_{n-2,2}}$ and none of these composition factors is fixed under tensoring with sign). In particular $E^\lambda_{\epsilon_1}\da_{A_{n-2,2}}$ is simple. From Lemma 1.1 of \cite{bk2} and from Lemma \ref{l29} we have that $(E^\lambda_+\oplus E^\lambda_-)\da_{A_{n-3,3}}\cong D^\lambda\da_{A_{n-3,3}}$ is semisimple and has at least 3 composition factors. In particular $E^\lambda_{\epsilon_1}\da_{A_{n-3,3}}$ is semisimple with at least 2 composition factors. So
\[\dim\End_{A_{n-3,3}}(E^\lambda_{\epsilon_1}\da_{A_{n-3,3}})>\dim\End_{A_{n-2,2}}(E^\lambda_{\epsilon_1}\da_{A_{n-2,2}}).\]
Similarly
\[\dim\End_{A_{n-3,3}}(E^\mu_{\epsilon_2}\da_{A_{n-3,3}})>\dim\End_{A_{n-2,2}}(E^\mu_{\epsilon_2}\da_{A_{n-2,2}}).\]

From Lemmas \ref{lM}, \ref{l1} and \ref{l15} it then follows that
\[\dim\End_{A_n}(E^\lambda_{\epsilon_1}\otimes E^\mu_{\epsilon_2})=\dim\Hom_{A_n}(\End_F(E^\lambda_{\epsilon_1}),\End_F(E^\mu_{\epsilon_2}))\geq 1+0+1=2\]
and so also in this case $E^\lambda_{\epsilon_1}\otimes E^\mu_{\epsilon_2}$ is not irreducible.
\end{proof}

\section*{Acknowledgements}

The author thanks Alexander Kleshchev for some comments on parts of the paper.

While finishing writing this paper the author was supported by the DFG grant MO 3377/1-1. The author was also supported by the NSF grant DMS-1440140 and Simons Foundation while in residence at the MSRI during the Spring 2018 semester.

\end{document}